\tikzset{
  knot diagram/every strand/.append style={
    ultra thick,
    red
  },
  show curve controls/.style={
    postaction=decorate,
    decoration={show path construction,
      curveto code={
        \draw [blue, dashed]
        (\tikzinputsegmentfirst) -- (\tikzinputsegmentsupporta)
        node [at end, draw, solid, red, inner sep=2pt]{};
        \draw [blue, dashed]
        (\tikzinputsegmentsupportb) -- (\tikzinputsegmentlast)
        node [at start, draw, solid, red, inner sep=2pt]{}
        node [at end, fill, blue, ellipse, inner sep=2pt]{}
        ;
      }
    }
  },
  show curve endpoints/.style={
    postaction=decorate,
    decoration={show path construction,
      curveto code={
        \node [fill, blue, ellipse, inner sep=2pt] at (\tikzinputsegmentlast) {}
        ;
      }
    }
  }
}
\newcommand{\C}{\mathbb{C}}
\newcommand{\Z}{\mathbb{Z}}
\newcommand{\N}{\mathbb{N}}
\DeclareMathOperator{\id}{id}
\DeclareMathOperator{\Hom}{Hom}
\DeclareMathOperator{\Imm}{Im}
\DeclareMathOperator{\Aut}{Aut}
\mathchardef\hyp="2D
\DeclareMathOperator{\lk}{lk}
\DeclareMathOperator{\Meyer}{Meyer}
\DeclareMathOperator{\Burau}{\overline{\mathscr{B}}}
\DeclareMathOperator{\sgn}{sgn}
\DeclareMathOperator{\Isotr}{\mathbf{Isotr}}
\DeclareMathOperator{\Tangles}{\mathbf{Tangles}}
\newcommand{\F}{\mathscr{F}}
\DeclareMathOperator{\Ann}{Ann}
\DeclareMathOperator{\sign}{sign}
\DeclareMathOperator{\Maslov}{Maslov}
\newtheorem{thm}{Theorem}[section]
\newtheorem{lem}[thm]{Lemma}
\newtheorem{prop}[thm]{Proposition}
\theoremstyle{definition}
\newtheorem{dfn}[thm]{Definition}
\newtheorem{rmk}[thm]{Remark}
\newtheorem{ex}[thm]{Example}
\title{The homomorphism defect of an extended Levine-Tristram signature via twisted homology}
\author{Alice Merz}
\date{}
\begin{document}

\maketitle

\begin{abstract}
    Taking the Levine-Tristram signature of the closure of a braid defines a map from the braid group to the integers. A formula of Gambaudo and Ghys provides an evaluation of the homomorphism defect of this map in terms of the Burau representation and the Meyer cocycle. In 2017 Cimasoni and Conway generalized this formula to the multivariable signature of the closure of coloured tangles. In the present paper, we extend even further their result by using a different 4-dimensional interpretation of the signature. We obtain an evaluation of the additivity defect in terms of the Maslov index and the isotropic functor $\mathscr{F}_\omega$. We also show that in the case of coloured braids this defect can be rewritten in terms of the Meyer cocycle and the coloured Gassner representation, making it a direct generalization of the formula of Gambaudo and Ghys.
\end{abstract}

\section{Introduction}
Let $\mathcal{I}$ be a link invariant  which takes values in an abelian group $G$. One can consider the map
\begin{align*}
    B_n &\to G \\
    \alpha &\mapsto \mathcal{I}(\widehat{\alpha})
\end{align*}
where $B_n$ denotes the $n$-braid group and $ \widehat{\alpha}$ denotes the closure of the braid $\alpha$. A first question one can ask is whether this map is an homomorphism. It is not difficult to check, however, that the only link invariant with this property is the constant trivial one. It is therefore interesting to try to evaluate the homomorphism defect
$$\mathcal{I}(\widehat{\alpha\beta}) -  \mathcal{I}(\widehat{\alpha}) - \mathcal{I}(\widehat{\beta}).$$

In the case of the Levine-Tristram signatures (\cite{Levine},\cite{Tristram}), which we introduce in Subsection \ref{subsection: LT signature}, this study was carried out by Gambaudo and Ghys in \cite{GambaudoGhys}. 
We denote the Levine-Tristram signature of a link $L$ at $\omega \in S^1 \subset \C$ as
$\sigma_\omega(L). $
Gambaudo and Ghys were able to express the homomorphism defect of these signatures in terms of two classical objects, which are the reduced Burau representation (\cite{BirmanCannon}, \cite{Burau})
$$\Burau_t: B_n \to \operatorname{GL}_{n-1}(\Z[t,t^{-1}]) $$
and the Meyer cocycle \cite{Meyer}.

When we evaluate at some $\omega \in S^1 \setminus \{1\}$, for $\alpha$ an $n$-braid, $\Burau_\omega(\alpha) \in \operatorname{GL}_{n-1}(\C)$ is known to be unitary with respect to a skew-Hermitian form. 
Therefore, for $\alpha, \beta \in B_n$, one can consider the Meyer cocycle of the two unitary matrices $\Burau_\omega(\alpha)$ and $\Burau_\omega(\beta)$. 
The main consequence of the result in \cite{GambaudoGhys} is the equality
\begin{equation}
\label{GG}
\sigma_\omega(\widehat{\alpha\beta})- \sigma_\omega(\widehat{\alpha}) - \sigma_\omega(\widehat{\beta})= -\Meyer\left(\Burau_\omega(\alpha), \Burau_\omega(\beta)\right)
\end{equation}
for any pair of $n$-braids $\alpha, \beta$, for any $\omega \in S^1$ of order coprime to $n$.

The Levine-Tristram signature admits a generalization  which we introduce in Subsection \ref{secondo}. This generalization is called multivariate signature \cite{CimasoniConway} and it associates to every $\mu$-coloured link $L$ and every $\omega \in (S^1)^\mu \subset \C^\mu$ an integer $\sigma_\omega(L)$.

The Burau representation has a multivariable extension as well, called reduced coloured Gassner representation (discussed in \cite{ConwayPhD}, Chapter 9)
which is also unitary and one can wonder if the equality \eqref{GG} holds in this context as well.
This program was carried out by Cimasoni and Conway in \cite{CimasoniConway}, where they actually extend this result to oriented coloured tangles. Tangles no longer form a group but they represent the morphisms of a category. Moreover, tangles which are endomorphisms of a given object in this category can be composed and their closure gives a well-defined oriented link. Hence it makes sense to try to evaluate the additivity defect of the multivariate signature of tangles.
In order to obtain their equality, they make use of an interpretation of the multivariate signature as a $G$-signature of a certain branched cover of the four ball, which is valid only for $\omega \in (S^1 \setminus \{1\})^\mu$ of finite order. 

However, as pointed out by Conway in \cite{ConwayPhD}, one can use a different interpretation of the multivariate signature via twisted homology studied by \cite{ConwayNagelToffoli} which is valid for all $\omega$'s, to extend their theorem to a wider subset of $(S^1)^\mu$. We introduce twisted homology and state the interpretation of the multivariate signature in Subsection \ref{twistedhomology}. We state Novikov-Wall additivity theorem in Subsection \ref{sec-2}, which is a useful tool to calculate twisted signatures.
It turns out that the Gassner representation evaluated at a certain $\omega$ can be extended to coloured tangles as a functor $\mathscr{F}_\omega$, defined in Section \ref{section: isotropic functor} of the present paper, from the category of oriented coloured tangles to a so-called isotropic category. This functor is an extension of the Gassner representation in the sense that whenever $\alpha$ is a coloured braid, $\mathscr{F}_\omega(\alpha)$ will be the graph of $\Burau_\omega(\alpha)$. The image by $\mathscr{F}_\omega$ of a tangle provides a totally isotropic subspace of a complex vector space endowed with a skew-Hermitian form, therefore one cannot consider the Meyer cocycle of pairs of these spaces, but it is possible to consider the Maslov index of three such objects. 

In the present paper we will follow Conway's suggestion in \cite{ConwayPhD} (Remark 15.0.2) and use the twisted homology interpretation of the multivariate signature studied in \cite{ConwayNagelToffoli} to extend the equality of Cimasoni and Conway. 

Let $\tau$ be an oriented tangle in $D^2 \times [0,1]$ such that the endpoints of the strings in $\tau$ are contained in $ D^2 \times \{0,1\}$. We suppose that the endpoints in $D^2 \times \{0\}$ are fixed $\{p_1,\ldots, p_n\}$, possibly the empty set. Analogously, suppose that the endpoints in $D^2 \times \{1\}$ are fixed $\{p'_{1},\ldots, p'_{n'}\}$. The tangle $\tau$ is $\mu$\emph{-coloured} if each connected component of $\tau$ is labelled by $\{1,2,\ldots, \mu\}.$ The colouring of the strings and loops induces a labelling on the points $p_k$ and we assign $c(k)=+j$ or $-j$ where the sign is determined uniquely by the induced orientation on the point as the boundary of an oriented string of $\tau$. Similarly, the labelling of the strings of $\tau$ induces a colouring on the points $p'_{k'}$ and we assign $c'(k')=+j'$ or $-j'$ in the same way. We call $c:\{1,\ldots,n\} \to \{\pm 1, \ldots , \pm \mu\}$ the \emph{bottom colour} and $c':\{1,\ldots,n'\} \to \{\pm 1, \ldots , \pm \mu\}$ the \emph{top colour} of $\tau$, and call $\tau$ a $(c,c')$\emph{-tangle}. For fixed $j$ and $j'$, let $i_j$ and $i'_{j'}$ be the sum of the signs of $c(k)=\pm j$ and $c'(k')=\pm j'$, respectively. Remark that in this situation $i_j=i'_{j'}.$

With this notation in mind, we obtain the following generalization:

\begin{thm}\label{uno}
 Let $\tau_1$ and $\tau_2$ be $(c,c)$-tangles. Then we have
\begin{equation} \label{equa}
\sigma_\omega(\widehat{\tau_1\tau_2}) -\sigma_\omega(\widehat{\tau}_1)-\sigma_\omega(\widehat{\tau}_2) = \Maslov(\F_\omega(\overline{\tau}_1), \F_\omega(\id_c), \F_\omega(\tau_2)) 
\end{equation}
for all $\omega=(\omega_1,\ldots,\omega_\mu)\in (S^1\setminus\{1\})^\mu$ such that $\prod_{j=1}^\mu \omega_j^{i_j} \neq 1.$
Here $\overline{\tau}$ denotes the reflection of the tangle $\tau$ across a horizontal plane with opposite orientation and $\id_c$ denotes the trivial $n$-braid seen as a $(c,c)$-tangle.
\end{thm}

Coloured braids are isomorphisms in the category of coloured tangles, and braids that are $(c,c)$-tangles form a group $B_c$.
In the case of coloured braids, we will show that Equation \eqref{equa} provides exactly the multivariate extension of Equation \eqref{GG}:

\begin{thm} \label{due}
Given coloured braids $\alpha, \beta\in B_c$ and $\omega \in (S^1 \setminus \{1\})^\mu$ such that $\prod_{j=1}^\mu \omega_j^{i_j} \neq 1$ we have
\begin{equation*} 
\sigma_\omega(\widehat{\alpha \beta}) -\sigma_\omega(\widehat{\alpha})-\sigma_\omega(\widehat{\beta}) = -\Meyer(\Burau_{\omega}(\alpha), \Burau_{\omega}(\beta)).
\end{equation*}
\end{thm}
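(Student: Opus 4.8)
The plan is to obtain Theorem~\ref{due} from Theorem~\ref{uno} by specializing the right-hand side of \eqref{equa} to braids, where the isotropic functor is nothing but the graph of the coloured Gassner matrix, and then performing a purely linear-algebraic comparison between the Maslov index of graphs and Meyer's cocycle. So first I would record that for $\alpha \in B_c$ the value $\F_\omega(\alpha)$ is the graph $\Gamma_{\Burau_\omega(\alpha)} = \{(x, \Burau_\omega(\alpha)x) : x \in H\}$, a Lagrangian of $\overline{H}\oplus H$, where $H$ is the complex vector space carrying the skew-Hermitian form for which $\Burau_\omega$ is unitary (this is the property of $\F_\omega$ recalled in the introduction; one has to match, once and for all, the target of $\F_\omega$ on an object $c$ with this skew-Hermitian space). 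Under the hypothesis $\prod_{j=1}^\mu \omega_j^{i_j} \neq 1$ this form is non-degenerate and $\Burau_\omega(\alpha)$ is invertible, so all the relevant subspaces are honest Lagrangians; this is exactly the locus on which the Maslov index in \eqref{equa} and Meyer's cocycle are simultaneously defined. Applying Theorem~\ref{uno} with $\tau_1 = \alpha$, $\tau_2 = \beta$ then gives \[ \sigma_\omega(\widehat{\alpha\beta}) - \sigma_\omega(\widehat{\alpha}) - \sigma_\omega(\widehat{\beta}) = \Maslov\bigl(\F_\omega(\overline{\alpha}),\, \Delta,\, \F_\omega(\beta)\bigr), \] so everything reduces to identifying this triple Maslov index with $-\Meyer(\Burau_\omega(\alpha), \Burau_\omega(\beta))$.

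Two facts are needed for that identification. The first is that $\F_\omega(\overline{\alpha})$ is the ``reversed graph'' $\Gamma_{\Burau_\omega(\alpha)}^{\mathrm{op}} = \{(\Burau_\omega(\alpha)x, x) : x \in H\} = \Gamma_{\Burau_\omega(\alpha)^{-1}}$: since $\overline{(\cdot)}$ is defined by reflecting the tangle and reversing orientations, it acts on the morphisms of the isotropic category by swapping the two factors of the Lagrangian correspondence, and on a braid it produces precisely the inverse correspondence; I would check this directly from functoriality of $\F_\omega$ and its compatibility with the duality $c \mapsto \overline{c}$. The second is the algebraic lemma that for any two matrices $A, B$ that are unitary with respect to a non-degenerate skew-Hermitian form on $H$ one has \[ \Maslov\bigl(\Gamma_{A^{-1}},\, \Delta,\, \Gamma_B\bigr) = -\Meyer(A, B), \] the Maslov index being computed in $\overline{H}\oplus H$ and $\Meyer$ being the Hermitian analogue of Meyer's cocycle used throughout. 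This lemma is, in essence, the computation underlying the Gambaudo--Ghys formula: unwinding Wall's description of $\Maslov(L_1, L_2, L_3)$ as the signature of an explicit Hermitian form on $L_1 \cap (L_2 + L_3)$ and performing the change of variables $(x, Ax) \mapsto x$, one recovers exactly the quadratic form whose signature defines $\Meyer(A,B)$, and the sign is pinned down by this comparison. Substituting $A = \Burau_\omega(\alpha)$ and $B = \Burau_\omega(\beta)$ and combining with the displayed specialization of Theorem~\ref{uno} yields the statement.

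The hard part is not conceptual but a matter of conventions: I expect the delicate points to be (i) the normalization identifying the target of $\F_\omega$ at $c$ with the skew-Hermitian space on which $\Burau_\omega$ acts, so that $\F_\omega(\alpha)$ is genuinely $\Gamma_{\Burau_\omega(\alpha)}$ and not the graph of a rescaled or transposed matrix; (ii) the precise behaviour of the reflection-with-reversed-orientation operation under $\F_\omega$, which must turn the graph of a unitary matrix into the graph of its inverse rather than of its conjugate-transpose before the unitarity simplification is applied; and (iii) keeping track of the signs and of the symplectic/skew-Hermitian conventions (orientation of $\overline{H}\oplus H$, sign in Wall's form, sign in the definition of $\Meyer$) so that the lemma comes out with exactly the minus sign needed to match \eqref{GG}. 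Once these are fixed, the proof is a one-line substitution.
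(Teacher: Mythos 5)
Your overall route is the same as the paper's: specialize Theorem \ref{uno} to braids, use that $\overline{\alpha}=\alpha^{-1}$ in $B_c$ so that $\F_\omega(\overline{\alpha})$ becomes the graph of $\Burau_\omega(\alpha)^{-1}$, and finish with the linear-algebra identity $\Maslov(\Gamma_{\gamma_1^{-1}},\Gamma_{\id},\Gamma_{\gamma_2})=-\Meyer(\gamma_1,\gamma_2)$, which is exactly Lemma \ref{Meyer} of the paper. The genuine gap is the step you dismiss as ``not conceptual but a matter of conventions'': that $\F_\omega(\alpha)$ is genuinely $\Gamma_{\Burau_\omega(\alpha)}$. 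You justify it by citing the introduction, but that sentence of the introduction is precisely what the paper establishes in the course of proving Theorem \ref{due}; quoting it is circular, and matching normalizations is not what is at stake.

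Concretely, $\F_\omega(\alpha)=\ker j_\alpha$ is defined via $\C^\omega$-twisted homology, whereas the reduced coloured Gassner representation acts on $H_1(D_n^\infty)$, a $\Lambda_\mu$-module which need not be free; only its localization $H_1(D_n;\Lambda_S)$ is free of rank $n-1$, and $\Burau_\omega(\alpha)$ means the evaluation at $t=\omega$ of a matrix for the localized representation. The graph of $\Burau_\omega(\alpha)$ is $\C^\omega\otimes_{\Lambda_S}\ker(\id_{\Lambda_S}\otimes k_\alpha)$, where $k_\alpha=(i_1)_*-(i_0)_*$, and since tensoring is only right exact one gets a priori just the inclusion $\C^\omega\otimes_{\Lambda_S}\ker(\id_{\Lambda_S}\otimes k_\alpha)\subset\ker j_\alpha$ (after identifying $H_1(-;\C^\omega)\cong\C^\omega\otimes_{\Lambda_S}H_1(-;\Lambda_S)$, which itself needs the universal coefficient spectral sequence and flatness of the localization). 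The paper closes the gap by an argument, not a convention: under $I_c(\omega)\neq 1$ the subspace $\F_\omega(\alpha)$ is Lagrangian (Poincar\'e duality plus vanishing of the twisted homology of the boundary pieces, as in Lemma \ref{nondeg}), the kernel of $k_\alpha$ is Lagrangian and equals the graph of the Gassner representation by Cimasoni--Turaev \cite{CimasoniTuraev}, and a dimension count (both sides have complex dimension $n-1$) forces the inclusion above to be an equality. If you supply this argument (or an equivalent one), the rest of your proposal — the behaviour of reflection on braids and the sign bookkeeping in the Meyer/Maslov comparison — goes through exactly as in the paper.
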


Here $\Burau_{(t_1,\ldots,t_\mu)}$ is the reduced coloured Gassner representation studied by Conway in \cite{ConwayPhD}.
\\ \\
The paper is organized as follows. Section \ref{sec-1} contains the definitions of most of the objects involved, such as the multivariate signature, and the statement of a result by Conway, Nagel and Toffoli \cite{ConwayNagelToffoli} which allows us to interpret the multivariate signatures as twisted signatures of $4$-dimensional manifolds.
We also briefly state Wall's result on the non-additivity of signatures of $4k$-manifolds, which can be found in \cite{Wall}. This is a useful tool to calculate twisted signatures of $4$-manifolds. 
In Section \ref{sec-3} we introduce our generalization of the Burau map to coloured tangles, which we call the isotropic functor.
Finally, in Section \ref{sec-4} we prove Theorems \ref{uno} and \ref{due}
\\ \\
\textit{Unless otherwise stated, all manifolds are assumed to be smooth.}
\\ \\
This paper is based on the author's Master thesis \cite{Merz}.

\section{Some basic definitions}
\label{sec-1}

\subsection{Levine-Tristram signature}
\label{subsection: LT signature}

Given an oriented link $L$, a Seifert surface for $L$ is a compact, connected, oriented surface $F$, embedded in $S^3$, which has $L$ as oriented boundary. 

Since $F$ is orientable, it admits a regular neighbourhood in $S^3$ homeomorphic to $F \times [-1,1]$ in which $F$ corresponds to $F \times \{0\}$. For $\varepsilon = \pm 1$ there are push-off maps 
$$i_\varepsilon: H_1 (F;\Z ) \to H_1 (S^3 \setminus F;\Z ) $$
defined by sending the homology class of a curve $ \gamma$ to the homology class of $\gamma \times \{ \varepsilon \}$.

There is a pairing, called the Seifert form:
\begin{align*}
    H_1 (F;\Z ) \times H_1 (F;\Z ) &\to \Z \\
    (a,b) & \mapsto \lk(a, i_+(b)).
\end{align*}

A matrix $A$ for this Seifert form is called a Seifert matrix. For every $\omega \in S^1 \subset \C$ observe that the matrix 
$$(1-\omega) A + (1-\overline{\omega})A^T $$ 
is Hermitian.

\begin{dfn}
Let $L$ be an oriented link, let $F$ be a Seifert surface for $L$ and let $A$ be a matrix representing the Seifert pairing of $F$. Given $\omega \in S^1$, the Levine-Tristram signature of L at $\omega$ is defined as the signature of $$(1-\omega) A + (1-\overline{\omega})A^T.$$
\end{dfn}

These signatures are well-defined link isotopy invariants, that is to say they are independent of the chosen Seifert surface and of the matrix representation of the Seifert form and that they give rise to a function
$$ \sigma_L: S^1 \setminus \{1\} \to \Z $$
which is piecewise constant.

\subsection{Multivariate Levine-Tristram signature}
\label{secondo}
Here we introduce the multivariate signature. Most of the content of this subsection can be found in \cite{CimasoniFlorens}.

There are some concepts related to knots that do not extend uniquely to links. For example we might want to endow our link with extra structure and give an ordering to the components: we obtain an ordered link and we say that two such links are isotopic if there is an isotopy that respects the ordering as well. 
Interpolating between links and ordered links there are coloured links:
a $\mu$-coloured link $L$ is an oriented link in $S^3$ together with a surjective map assigning to each component of $L$ a colour in $\{1, \ldots, \mu\}$. 

We will often write $L= L_1 \sqcup \cdots \sqcup L_\mu$ where $L_i$ is the sublink of the components of $L$ of colour $i$.

Notice that a $1$-coloured link is an ordinary link, while an ordered link $L$ is a $\mu$-coloured link if $\mu$ is equal to the number of connected components of $L$.

In order to define the multivariate signatures we need $C$-complexes, which are a generalization of Seifert surfaces for coloured links. They were first introduced by Cooper in \cite{Cooper} and studied by Kadokami \cite{Kadokami} and Cimasoni \cite{Cimasoni}.

A $C$-complex for a $\mu$-coloured link $L=L_1 \sqcup \cdots \sqcup L_\mu$ is a union $S= S_1 \cup \cdots \cup S_\mu$ of surfaces in $S^3$ such that:
\begin{itemize}
    \item for all $i$ the surface $S_i$ is a Seifert surface for $L_i$. We allow in this case $S_i$ to be disconnected, but there should be no closed components;
    \item for all $i \neq j$, $S_i$ and $S_j$ intersect transversely in a possibly empty union of \textit{clasp intersections} (see Figure \ref{clasp});
    \item there are no triple intersections: for $i,j,k$ pairwise distinct, $S_i \cap S_j \cap S_k$ is empty.
\end{itemize}

It is proved in \cite{Cimasoni} that every coloured link has a $C$-complex.

\begin{figure}[H]
    \centering
    \includegraphics[width = 5cm]{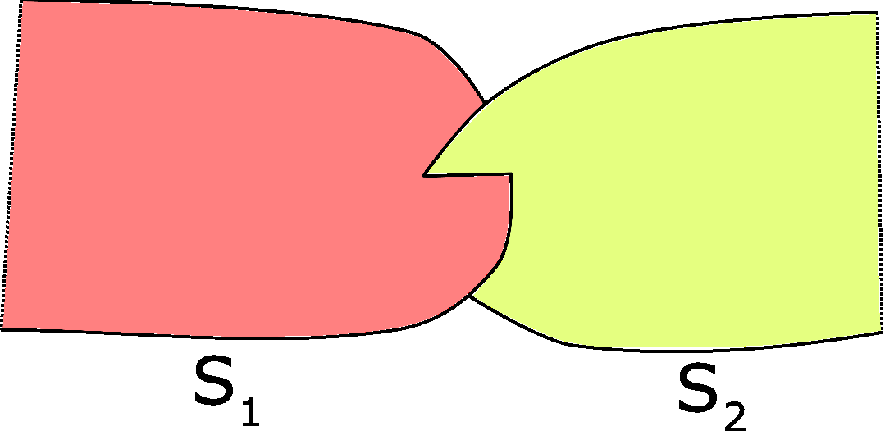}
    \caption{A clasp intersection.}
    \label{clasp}
\end{figure}

In order to define the ordinary Levine-Tristram signature we defined some ``push-off'' maps $i_\varepsilon$ on the first homology group of a Seifert surface for our link and then we used them to define the Seifert form. It turns out (see \cite{CimasoniFlorens}) that, given a $C$-complex $S= S_1 \cup \ldots \cup S_\mu$ there are similar maps 
$$i_\varepsilon: H_1(S) \to H_1(S^3 \setminus S) $$
where $\varepsilon \in \{\pm 1 \}^\mu$. These maps push off (the homology class of) a curve in $S$ positively on $S_i$ if $\varepsilon_i$ is positive and negatively otherwise.
Hence we can define
$$\alpha_\varepsilon: H_1(S) \times H_1(S) \to \Z $$
as $\alpha_\varepsilon(x,y)= \lk(i_\varepsilon(x),y).$




Fix a basis of $H_1(S)$ and let $A_\varepsilon$ denote the matrix of $\alpha_\varepsilon$.
Notice that when $\mu=1$, $\alpha_{-1}$ is the usual Seifert form. 
Moreover, for all $\varepsilon$, the identity $A_{-\varepsilon}=A_{\varepsilon}^T$ holds.

Fix $\omega= (\omega_1,\ldots, \omega_\mu) \in T^\mu_*$, where $T^\mu _* = (S^1 \setminus \{1\})^\mu$. Set
$$H(\omega) \coloneqq \sum_{\varepsilon \in \{\pm 1\}^\mu} \prod_{i=1}^\mu (1 - \overline{\omega}_i^{\varepsilon_i}) A_\varepsilon. $$

Notice that the matrix $H(\omega)$ is Hermitian, because $\overline{H(\omega)}=H(\overline{\omega})= H(\omega)^T$.

\begin{dfn}
The Levine-Tristram multivariate signature $\sigma_\omega(L)$ of a $\mu$-coloured link $L$ at $\omega \in T^\mu_*$ is
$$\sigma_\omega(L) \coloneqq \sign (H(\omega)). $$
\end{dfn}

The following was proved by Cimasoni and Florens in \cite{CimasoniFlorens}.
\begin{thm}
The multivariate signature is a well-defined isotopy invariant of coloured links.
\end{thm}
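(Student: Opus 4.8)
The plan is to follow the classical strategy used for Seifert-surface invariants, adapted to $C$-complexes. First, recall that for a possibly degenerate Hermitian matrix the signature is still well defined (it is the signature of the restriction to any complement of the radical, unambiguous by Sylvester's law of inertia), and that it is invariant under congruence $H \mapsto P H \overline{P}^{\,T}$ with $P$ invertible, and additive under block sums. So the theorem reduces to a move-by-move check, once we invoke Cimasoni's result in \cite{Cimasoni} that any two $C$-complexes for a fixed $\mu$-coloured link $L$ are related by a finite sequence of elementary moves: ambient isotopy, a change of basis of $H_1(S)$, handle stabilizations performed on a single surface $S_i$, and ``clasp moves'' creating or removing a pair of clasp intersections between two surfaces $S_i, S_j$. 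It suffices to show that each such move either leaves $H(\omega)$ unchanged up to congruence, or enlarges it by a Hermitian block of signature $0$.

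The benign moves are immediate. A change of basis of $H_1(S)$ is given by some $P \in \mathrm{GL}(\Z)$; every $A_\varepsilon$ becomes $P A_\varepsilon P^{T}$, hence $H(\omega) \mapsto P\,H(\omega)\,\overline{P}^{\,T}$ is a congruence and $\sigma_\omega$ is unchanged. An ambient isotopy of the $C$-complex induces an isomorphism of the bilinear pairings $\alpha_\varepsilon$, hence again a congruence of $H(\omega)$.

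For a handle stabilization on $S_i$ one adjoins two generators $e, f$ to $H_1(S)$, where $e$ may be taken to bound in $S^3 \setminus S$ on the relevant side. From the definition of the push-off maps $i_\varepsilon$ one reads off that in the enlarged basis each $A_\varepsilon$ picks up two extra rows and columns of a prescribed triangular shape, with the $(e,e)$-entry zero and the $(e,f)$-entry equal to a unit times $\prod_{i} (1-\omega_i^{\varepsilon_i})$-type factors. Summing over $\varepsilon$, the new $2\times 2$ block of $H(\omega)$ has a vanishing diagonal entry and a nonzero off-diagonal entry precisely because $1 - \overline{\omega}_i^{\pm 1}\neq 0$ when $\omega_i \neq 1$; it is therefore a nondegenerate hyperbolic-type block of signature $0$, which splits off by congruence. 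This is exactly where the hypothesis $\omega \in T^\mu_*$ enters.

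The genuinely delicate case, and the main obstacle, is the clasp move, since it alters the topology of two of the surfaces simultaneously and hence changes $H_1(S)$ and $H_1(S^3\setminus S)$ in a coupled way; in the single-variable case $\mu = 1$ there are no clasp moves at all, so this step has no classical counterpart. Here one must track a new loop $\gamma$ passing through the created clasp and, via a Mayer--Vietoris argument together with explicit linking-number computations near the clasp (as in \cite{Cimasoni} and \cite{CimasoniFlorens}), identify the corresponding enlargement of $H(\omega)$ as yet another signature-zero block, with nondegeneracy again ensured by $\omega_i, \omega_j \neq 1$. Once the effect of all four move types is understood, invariance of $\sigma_\omega$ under change of $C$-complex follows; together with the fact that $H(\omega)$ is Hermitian, this establishes that $\sigma_\omega(L)$ is a well-defined isotopy invariant of the $\mu$-coloured link $L$.
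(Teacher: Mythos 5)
The paper does not actually prove this statement: it is quoted as a theorem of Cimasoni and Florens, with a citation to \cite{CimasoniFlorens} and no proof given, so the only meaningful comparison is with the argument in that reference. Your outline is precisely the strategy used there: invoke a lemma saying that any two $C$-complexes for isotopic coloured links are related by a finite list of elementary moves, and check that each move changes $H(\omega)$ either by a congruence or by splitting off a Hermitian block of signature zero, the hypothesis $\omega\in T^\mu_*$ guaranteeing that the relevant off-diagonal entries do not vanish. The easy cases are fine as you describe them: basis change and isotopy give congruences, and for a handle attachment on $S_i$ the new off-diagonal entry of $H(\omega)$ is of the form $(1-\overline{\omega}_i)\prod_{j\neq i}(2-2\operatorname{Re}\omega_j)$, which is nonzero exactly because no $\omega_j$ equals $1$, so the enlargement is congruent to the old matrix plus a nondegenerate block of signature $0$.

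The genuine gap is concentrated exactly where you say the difficulty lies, and it is not closed. First, the reduction lemma you attribute to \cite{Cimasoni} (that isotopy, stabilization and clasp-pair creation suffice to relate any two $C$-complexes for a given coloured link) is itself a substantial theorem; the version needed here is established in \cite{CimasoniFlorens}, its precise list of moves must be taken from there rather than posited, and ``change of basis of $H_1(S)$'' is not a geometric move at all, merely linear algebra. Second, and more importantly, your treatment of the clasp move is an assertion, not an argument: pointing to ``explicit linking-number computations near the clasp (as in \cite{Cimasoni} and \cite{CimasoniFlorens})'' is circular, since \cite{CimasoniFlorens} is the proof of the theorem being established. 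Note also that the effect of adding a pair of clasps on $H_1(S)$ is not uniformly a two-dimensional enlargement: depending on whether $S_i$ and $S_j$ already lay in the same connected component of $S$, the rank of $H_1(S)$ grows by one or by two (and in the first case the number of components of $S$ changes), so the bookkeeping is genuinely different from the hyperbolic $2\times 2$ picture of stabilization, and in the rank-one case one must actually compute the new row and column to see that the signature is unchanged. As a summary of the cited proof your text is serviceable --- and the paper itself only cites the result --- but as a self-contained proof it leaves the decisive step unproved.
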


\subsection{Twisted homology}
\label{twistedhomology}

\label{twisted}

Twisted homology is a well-known topic in algebraic topology and a reference can be found for example in \cite{DavisKirk} or in Chapter 5 of \cite{ConwayPhD}.

Let $X$ be a connected CW complex and let $Y \subset X$ be a possibly empty subcomplex of $X$. Let $p: \widetilde{X} \to X$ be the universal cover of $X$ and define $\widetilde{Y} := p^{-1}(Y)$.

The fundamental group $\pi_1(X)$ acts on the left on $\widetilde{X}$ and this action gives to $C_*(\widetilde{X}, \widetilde{Y}):= C_*^{CW}(\widetilde{X}, \widetilde{Y})$ a left $\Z[\pi_1(X)]$-module structure.

Let $R$ be a ring and let $M$ be a left $R$-module and a right $\Z[\pi_1(X)]$-module such that the left and the right action are compatible, that is to say $M$ is a $(R, \Z[\pi_1(X)])$-bimodule. 

\begin{dfn}
The chain complex $C_*(X,Y; M) := M \otimes_{\Z[\pi_1(X)]} C_*(\widetilde{X}, \widetilde{Y})$ of left $R$-modules is said to be the twisted chain complex with coefficients in $M$. The correspondent homology left $R$-modules $H_*(X,Y;M)$ are called twisted homology modules of $(X,Y)$ with coefficients in $M$.
\end{dfn}

\begin{ex}
\label{strutturaComega}
Let $\psi: \pi_1 (X) \to \Z^\mu= \langle t_1,\ldots, t_\mu \rangle$ be an homomorphism and fix $$\omega = (\omega_1, \ldots, \omega_\mu) \in T^\mu= (S^1 \setminus \{1\})^\mu.$$ The homomorphism $\psi$ can be extended to $\Psi: \Z[\pi_1 (X)] \to \Z[\Z^\mu] = \Z[ t_1^{\pm1}, \ldots, t_{\mu}^{\pm1}]$ and can be composed with the ring homomorphism $\Z[ t_1^{\pm1}, \ldots, t_{\mu}^{\pm1}] \to \C$ which evaluates $t=(t_1, \ldots, t_\mu)$ in $\omega.$ We therefore obtain a ring homomorphism
$$\phi: \Z[\pi_1(X)] \to \C $$
which induces on $\C$ a $(\C, \Z[\pi_1(X)])$-bimodule structure.

In this situation, to emphasize the choice of $\omega$ we write $\C^\omega$ instead of simply $\C$. Therefore the $(\C, \Z[\pi_1(X)]) $-bimodule structure on $\C^\omega$ allows us to consider the complex vector spaces $H_k(X; \C^\omega).$ When it is not clear from the context we will indicate $\C^\omega$ as $\C^{\psi,\omega}$ to remember the dependence on $\psi$ too.

Suppose now $X$ to be a manifold of dimension $2k$. Similarly to the untwisted case, on $H_k(X; \C^\omega)$ is defined a twisted intersection pairing:
$$\lambda_M(W): H_k(X;\C^\omega) \times H_k(X;\C^\omega) \to \C. $$
This pairing can be proved to be Hermitian when $k$ is even and it is skew-Hermitian when $k$ is odd. As in the untwisted case, the radical of this pairing corresponds to the image of the inclusion of $H_k(\partial X;\C^\omega). $
\end{ex}

\begin{rmk}
Let $L=L_1\cup \ldots \cup L_\mu$ be a link, and consider a collection $F=F_1 \cup \ldots \cup F_\mu$ of oriented and connected surfaces that are smoothly and properly embedded in the four ball $D^4$ in general position\footnote{i.e. their only intersections are transverse double points between different surfaces.} and such that $\partial F_i= L_i$. Let $W_F$ denote the exterior of $F$ in $D^4$. It can be seen (see Lemma \ref{finalmente}) that $H_1(W_F) \simeq \Z^\mu$ is generated by the meridians of the $F_i$'s.

Let $\omega= (\omega_1,\ldots, \omega_\mu) \in T^\mu_*$. Sending the meridians of $L_i$ to $\omega_i$ gives a homomorphism
$$\pi_1(W_F) \to H_1(W_F) \to \C $$
and therefore we obtain twisted homology complex vector spaces $$ H_k(W_F; \C^\omega).$$
As it was seen in Example \ref{strutturaComega}, on $H_2(W_F; \C^\omega)$ there is a Hermitian twisted intersection form $\lambda_{\C^\omega}(W_F)$. The following result is due to Conway, Nagel, Toffoli and its proof can be found in \cite{ConwayNagelToffoli}.

\begin{thm}
\label{CNT}
Let $L$ be a $\mu$-coloured link. For every coloured bounding surface F and for all $\omega \in T^\mu_*$, we have
$$\sigma_\omega(L) = \sign(\lambda_{\C^\omega}(W_F)). $$
\end{thm}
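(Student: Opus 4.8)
The plan is to establish the identity first for one convenient bounding surface, built from a $C$-complex, and then to upgrade it to an arbitrary $F$ by showing that the right-hand side does not depend on the choice of bounding surface.

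For the independence statement I would use the fact that any two coloured bounding surfaces $F$ and $F'$ for the same $\mu$-coloured link $L$ are connected, inside $D^4$ rel $\partial$, by a finite sequence of elementary moves: ambient isotopies, additions and deletions of a small trivial tube joining two sheets of the same colour, and births and deaths of a cancelling pair of transverse double points. I would then check that each such move alters the Hermitian lattice $(H_2(W_F;\C^\omega),\,\lambda_{\C^\omega}(W_F))$ only by an orthogonal sum with a metabolic piece — a hyperbolic plane spanned by the newly created meridian class and a class geometrically dual to it — and hence leaves $\sign(\lambda_{\C^\omega}(W_F))$ unchanged. An equivalent route, which I would keep in reserve, is to pick a properly embedded surface cobordism $\Sigma\subset D^4\times[0,1]$ from $F$ to $F'$ rel $L$, excise a neighbourhood of it, and apply Novikov additivity together with Wall's non-additivity formula (recalled in Section~\ref{sec-1}) to the resulting splitting of the boundary: the product piece $(S^3\setminus N(L))\times[0,1]$ contributes $0$ and the Wall correction term vanishes because the Lagrangians arising from the two ends coincide, so $\sign(\lambda_{\C^\omega}(W_F))=\sign(\lambda_{\C^\omega}(W_{F'}))$.

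Once the computation is reduced to one surface, I would take $F\subset D^4$ to be a $C$-complex $S=S_1\cup\cdots\cup S_\mu$ for $L$ (which exists by \cite{Cimasoni}) pushed slightly into the interior of the four-ball, keeping $\partial S=L$ on $S^3$ and putting the clasp intersections into general position so that $F$ has only transverse double points. To determine $H_2(W_F;\C^\omega)$ and $\lambda_{\C^\omega}(W_F)$ I would run a Mayer--Vietoris argument with $\C^\omega$-coefficients for the splitting $D^4 = N(F)\cup W_F$: the twisted homology of $D^4$ vanishes in positive degrees, $N(F)$ retracts onto the $2$-complex $F$, and $N(F)\cap W_F$ is essentially the normal circle bundle of $F$, whose twisted homology records how a curve on a sheet $S_i$ links the other sheets through the clasp intersections — this is where the $\mu$-variable information enters. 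Carrying out this bookkeeping, which is the four-dimensional analogue of the three-dimensional computation of Cimasoni and Florens \cite{CimasoniFlorens}, I expect a basis of $H_2(W_F;\C^\omega)$ indexed by a basis of $H_1(S;\Z)$, together with the formula: if $\widetilde a,\widetilde b$ are the classes obtained by capping off curves $a,b$ on $S$, then $\lambda_{\C^\omega}(W_F)(\widetilde a,\widetilde b)$ is computed by pushing $b$ off $S$ to each of the $2^\mu$ sides $\varepsilon\in\{\pm1\}^\mu$, taking the linking numbers in $S^3$, and weighting the $\varepsilon$-term by $\prod_{i=1}^\mu(1-\overline\omega_i^{\varepsilon_i})$. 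These are exactly the entries of $H(\omega)=\sum_\varepsilon\prod_i(1-\overline\omega_i^{\varepsilon_i})A_\varepsilon$, so $\lambda_{\C^\omega}(W_F)$ is Hermitian-congruent to $H(\omega)$ up to stabilisation by degenerate and hyperbolic summands, whence $\sign(\lambda_{\C^\omega}(W_F))=\sign(H(\omega))=\sigma_\omega(L)$. Combined with the independence step, this yields the formula for every coloured bounding surface.

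The main obstacle is the twisted homology computation of the third step: one must keep precise track of the clasp intersections, a genuinely multivariable feature with no counterpart in the classical single-variable argument, and one must verify that the radical of $\lambda_{\C^\omega}(W_F)$ is exactly the image of $H_2(\partial W_F;\C^\omega)$, so that its signature really is the invariant in question and the dimension count is consistent with the size of the $A_\varepsilon$. The independence step also conceals a nontrivial input — the classification of coloured bounding surfaces in $D^4$ up to the moves above — which I would either cite or extract from a general-position analysis of the cobordism $\Sigma$.
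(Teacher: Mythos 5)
You should first be aware that the paper does not prove Theorem \ref{CNT} at all: it is imported verbatim from \cite{ConwayNagelToffoli}, so the only comparison available is with the proof in that reference. Your outline does mirror the broad strategy used there (and in the spirit of \cite{CimasoniFlorens}): compute the twisted intersection form explicitly for the exterior of a pushed-in $C$-complex and identify it with $H(\omega)$, then show that $\sign(\lambda_{\C^\omega}(W_F))$ is independent of the choice of coloured bounding surface. So the plan is pointed in the right direction.

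However, as written it is a plan rather than a proof, and both load-bearing steps are left open. For the independence step, the ``elementary moves'' classification of coloured bounding surfaces in $D^4$ rel $L$ (isotopy, tubing, birth/death of double points) is exactly the kind of statement that needs either a citation or a genuine general-position/Cerf-type argument, and even granting it, the effect of a double-point birth on $(H_2(W_F;\C^\omega),\lambda_{\C^\omega}(W_F))$ is not obviously a hyperbolic stabilisation --- the new classes pair with coefficients of the shape $(1-\omega_i)(1-\omega_j)$ and one must actually verify the added summand has zero signature. Your reserve route via a cobordism $\Sigma\subset D^4\times[0,1]$ is closer to what one really does, but the boundary of the exterior of $\Sigma$ contains, besides $W_F$, $W_{F'}$ and the product piece, the unit normal circle bundle over $\Sigma$; one must control its twisted homology and the resulting Novikov--Wall correction terms, and one needs the statement that twisted signatures vanish for $4$-manifolds bounding compatibly over $\Z^\mu$ --- none of which is established by saying ``the Lagrangians from the two ends coincide.'' For the computation step, the identification of $\lambda_{\C^\omega}(W_F)$ with $H(\omega)$ up to zero-signature summands is precisely the content of the theorem, and your proposal only says you ``expect'' it: the Mayer--Vietoris bookkeeping around the clasps (a plumbed, not tubular, neighbourhood of $F$), the extra summands of $H_2(W_F;\C^\omega)$ not indexed by $H_1(S)$, and the verification that they do not contribute to the signature are exactly where the work lies in \cite{ConwayNagelToffoli}. (Your worry about the radical is the one point already settled in the paper: Example \ref{strutturaComega} records that the radical of the twisted intersection form is the image of the boundary.) Until those two steps are carried out or cited, the argument is an outline of the known proof rather than a proof.
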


This algebraic $4$-dimensional interpretation of the multivariate signature extends the one by Cimasoni and Florens in \cite{CimasoniFlorens}, which makes use of the more geometric branched covers of the four ball, and which was used by Cimasoni and Conway to prove their theorem in \cite{CimasoniConway}. The idea is to use this more general interpretation to obtain an extension of their result. 
\end{rmk}

\subsection{Novikov-Wall non-additivity}
\label{sec-2}
In this section we state an important result by Wall \cite{Wall} about the non-additivity of (twisted) signatures under gluing of $4k$-manifolds along their boundary. This theorem, together with the $4$-dimensional interpretation of the multivariate signature in Theorem \ref{CNT}, will be of fundamental importance in the proof of our main result (Theorem \ref{main}).
Let $(H, \lambda)$ be a finite dimensional skew-Hermitian complex vector space and let $L_1$, $L_2$ and $L_3$ be three totally isotropic subspaces of $H$, i.e. such that $\lambda$ vanishes identically on $L_i$ for $i=1,2,3$. Consider the Hermitian form $f$ defined on $(L_1+L_2) \cap L_3$ as follows: for $a,b \in (L_1+L_2) \cap L_3$, write $a=a_1+a_2$ with $a_i \in L_i$ for $i=1,2$ and set
$$f(a,b)= \lambda(a_2,b).$$
The form $f$ can easily be checked to be well-defined and Hermitian. We can therefore define:

\begin{dfn}
The signature of $f$ is called the Maslov index of $L_1,L_2$ and $L_3$. It will be denoted by $\Maslov(L_1,L_2,L_3).$
\end{dfn}

\begin{rmk}
\label{propertymaslov}
It is not difficult to show that:
\begin{itemize}
    \item if $L_1,L_2,L_3$ are isotropic subspaces of $H$ and $L'_1,L'_2,L'_3$ are isotropic subspaces of $H'$, then $L_i \oplus L'_i$ for $i=1,2,3$ is an isotropic subspace of $H \oplus H'$ and 
    $$\Maslov(L_1\oplus L'_1,L_2\oplus L'_2, L_3\oplus L'_3)= \Maslov(L_1,L_2, L_3)+  \Maslov(L'_1,L'_2, L'_3);$$
    \item If $\sigma$ is a permutation of $(1,2,3)$ it holds:
$$\Maslov (L_1,L_2,L_3)= \sgn(\sigma) \Maslov(L_{\sigma(1)},L_{\sigma(2)},L_{\sigma(3)}). $$
\end{itemize}
\end{rmk}

Let $Y$ be an oriented connected compact $4k$-manifold and let $X_0$ be an oriented compact $(4k-1)$-manifold, properly embedded into $Y$ so that $\partial X_0= X_0 \cap \partial Y$. Suppose that $X_0$ splits $Y$ into two manifolds $Y_-$ and $Y_+$. For $\varepsilon= \pm$, denote by $X_\varepsilon$ the closure of $\partial Y_\varepsilon \setminus X_0$, which is a compact $(4k-1)$-manifold. Let $Z$ denote the compact $(4k-2)$-manifold $$Z= \partial X_0 = \partial X_+ = \partial X_-. $$
The manifolds $Y_+$ and $Y_-$ inherit an orientation from $Y$. Orient $X_0$, $X_+$ and $X_-$ so that 
$$ \partial Y_+ = X_+ \cup (-X_0)$$ and $$ \partial Y_- = X_0 \cup (-X_-)$$
and orient $Z$ such that $$Z= \partial X_- = \partial X_+ = \partial X_0.  $$

Suppose that we are given a homomorphism $$\psi: \pi_1(Y) \to \Z^\mu = \langle t_1,\ldots, t_\mu \rangle . $$
By Example \ref{strutturaComega}, for every choice of $\omega \in (S^1 \setminus \{1\})^\mu$, this endows $\C$ with a $(\C, \Z[\pi_1(Y)])$-bimodule structure, given by the map $\Psi: \Z[\pi_1(Y)] \to \C$, and we emphasize the choice of $\omega$ by indicating this bimodule as $\C^\omega$. For every submanifold $S$ of $Y$ the inclusion induced homomorphism $ \pi_1(S) \to \pi_1(Y)$ induces on $\C$ a $(\C, \Z[\pi_1(S)])$-bimodule structure, which we still indicate as $\C^\omega$. Recall that for manifolds $W$ of even dimension we also have a twisted intersection pairing $\lambda_{\C^\omega}(W)$ and that in the case the dimension is a multiple of $4$ the intersection pairing is Hermitian, and therefore it has a well-defined signature $\sign_\omega(W)$.

\begin{figure}[H]
    \centering
    \includegraphics[width = 5 cm]{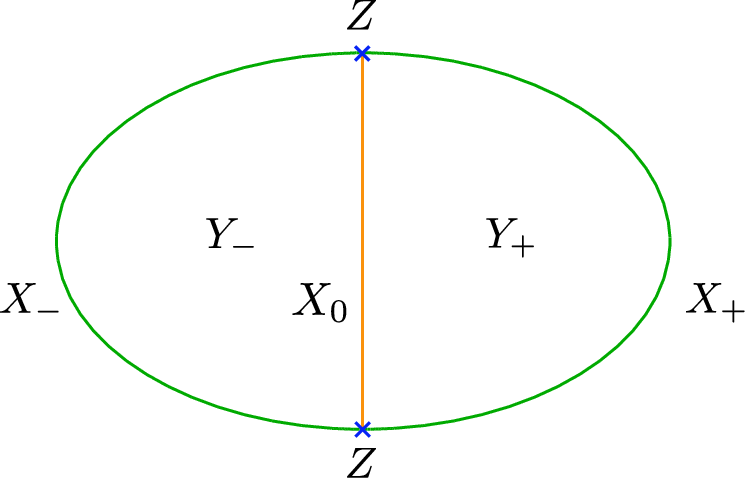}
    \caption{The manifold $Y$ in the setting of the Novikov-Wall non-additivity theorem.}
\end{figure}
\begin{thm}[Novikov-Wall non-additivity theorem]
In the situation above, 
$$\sign_\omega(Y)= \sign_\omega(Y_+) + \sign_\omega(Y_-) + \Maslov(L_-,L_0, L_+)  $$
where $L_\varepsilon= \ker (H_{2k-1}(Z; \C^\omega) \to H_{2k-1}(X_\varepsilon ; \C^\omega)) $ for $\varepsilon=-, +, 0$.
\end{thm}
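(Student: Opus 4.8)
The plan is to reduce the twisted statement to Wall's original (untwisted) non-additivity theorem by an essentially formal argument: the whole proof of Wall's theorem is a computation in the algebra of intersection forms and their radicals on the homology of the pieces, together with Mayer–Vietoris sequences relating them, and every one of these ingredients has a twisted analogue once one works with $\C^\omega$-coefficients. First I would set up the twisted Mayer–Vietoris sequences for the decompositions $Y = Y_+ \cup_{X_0} Y_-$, for $\partial Y = X_+ \cup_Z (-X_-)$, and for $\partial Y_\varepsilon$, all with coefficients in $\C^\omega$; these exist because the bimodule structure on $\C^\omega$ is pulled back from $\pi_1(Y)$ and hence restricts compatibly to every submanifold (as noted in the paragraph preceding the statement). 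From these sequences one extracts, exactly as in the untwisted case, the three Lagrangian-type subspaces $L_\varepsilon = \ker(H_{2k-1}(Z;\C^\omega) \to H_{2k-1}(X_\varepsilon;\C^\omega))$, which are totally isotropic for the skew-Hermitian twisted intersection form on $H_{2k-1}(Z;\C^\omega)$ — isotropy follows because $Z = \partial X_\varepsilon$ and the radical of the intersection form on the boundary-bounded part is the image from the bounding manifold, a fact recalled in Example~\ref{strutturaComega}.

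Next I would run Wall's diagram chase verbatim, now over $\C$. Wall's argument identifies the signature defect $\sign(Y) - \sign(Y_+) - \sign(Y_-)$ with the signature of a certain Hermitian form built on $(L_- + L_+) \cap L_0$ (or a symmetric permutation thereof), by constructing an explicit algebraic cobordism between the intersection form of $Y$ and the orthogonal sum of those of $Y_\pm$, whose "error term" is precisely the Maslov correction form $f$ of the three isotropic subspaces. Each step — Poincaré–Lefschetz duality, the half-lives-half-dies lemma, the long exact sequences of the pairs — holds with twisted coefficients: twisted Poincaré–Lefschetz duality over the field $\C$ with the involution $t_i \mapsto \overline t_i = t_i^{-1}$ is standard (see the references to \cite{DavisKirk} and \cite{ConwayPhD} in Section~\ref{twistedhomology}), and all the linear algebra is insensitive to whether the ground ring is $\R$ or $\C$ as long as one consistently uses Hermitian (resp. skew-Hermitian) forms in place of symmetric (resp. skew-symmetric) ones. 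The sign conventions on $X_0, X_\pm, Z$ fixed in the statement are chosen exactly so that the orientation bookkeeping in Wall's proof produces $\Maslov(L_-, L_0, L_+)$ with the stated ordering.

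The main obstacle is not conceptual but bookkeeping: one must check that passing from the symmetric/skew-symmetric setting to the Hermitian/skew-Hermitian setting does not disturb any of Wall's sign computations, and in particular that the twisted intersection form is genuinely non-degenerate on the relevant quotients (equivalently, that "half lives, half dies" survives twisting) — this uses that $\C^\omega$ is a field with involution and that the manifolds in play are compact, so the twisted homology groups are finite-dimensional and Poincaré duality is a perfect pairing modulo boundary. A secondary subtlety is that $\psi$ need not be surjective or even nontrivial on each piece, so some of the $H_*(\,\cdot\,;\C^\omega)$ may behave like ordinary homology with $\C$-coefficients on that piece; but since the maps in the Mayer–Vietoris and long exact sequences are all induced by inclusions over a common $\pi_1(Y)$, the argument goes through uniformly. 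Once these points are dispatched, the theorem follows by reading off the signature of $f$ as $\Maslov(L_-, L_0, L_+)$ from the definition given just above the statement.
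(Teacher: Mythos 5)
Your plan is correct and coincides with what the paper itself does: the paper gives no independent proof but simply cites Wall's original (untwisted) argument and remarks that it extends readily to twisted coefficients, which is exactly the reduction you carry out (twisted Mayer--Vietoris, twisted Poincar\'e--Lefschetz duality over the field $\C$ with involution, isotropy of the kernels $L_\varepsilon$, and Wall's diagram chase run verbatim with Hermitian/skew-Hermitian forms). No discrepancy to report.
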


\begin{rmk}
This theorem was originally proved by Wall in \cite{Wall} in the untwisted case, but it can be easily extended to the twisted case.
\end{rmk}

\section{The isotropic functor}
\label{section: isotropic functor}
The main goal of this section is to find a generalization of the reduced Burau representation, which is defined on braid groups to coloured tangles. We will see that coloured tangles are the morphisms of a category and we will therefore find a functor $\mathscr{F}_\omega$ on this category to a certain category of skew-Hermitian vector spaces. 

\label{sec-3}
\subsection{The category of coloured tangles}
Braids can be seen as a particular example of tangles. Intuitively, a tangle is a particular properly embedded $1$-dimensional submanifold of $D^2 \times [0,1]$ with a prescribed oriented boundary. More precisely, given a positive integer $n$, let $p_j^{(n)}$ be the point $$\left(\frac{2j-n-1}{n},0 \right)\in D^2 $$
for $j=1,\ldots, n$. A \emph{tangle} $\tau$ is a smooth, properly embedded $1$-submanifold of $D^2 \times [0,1]$ such that there exist $n$ and $n'$
 such that 
 $$\partial\tau=\{p_1^{(n)},\ldots,p_n^{(n)}\}\times \{0\} \cup \{p_1^{(n')},\ldots,p_{n'}^{(n')}\}\times \{1\} $$
 



An oriented tangle $\tau$ is called $\mu$-coloured if each of its components is assigned a label in $\{1,\ldots,\mu\}$. We will call a $\mu$-coloured oriented tangle a $(c,c')$-tangle, where 
$$c:\{1,\ldots,n\} \to \{\pm1,\ldots,\pm\mu\},$$  
$$c':\{1,\ldots,n'\} \to \{\pm1,\ldots,\pm\mu\}$$ 
are sequences so that $c(k)= +j$ or $-j$ if $p_k^{(n)}$ belongs to a component with colour $j$ and the sign depends only by the induced orientation on the point as boundary of an oriented string of $\tau$. We sometimes denote a colouring $c:\{1,\ldots,n\}\to \{\pm 1,\ldots, \pm \mu\}$ as $(c(1),\ldots,c(n))$.

Two $(c,c')$-tangles $\tau_1$ and $\tau_2$ are isotopic if there exists a self-homeomorphism $h$ of $D^2 \times [0,1]$ that keeps the boundary $\partial(D^2 \times[0,1])$ fixed and such that $h$ sends $\tau_1$ homeomorphically to $\tau_2$ preserving the orientation and the colouring.
We denote by $T_\mu(c,c')$ the set of isotopy classes of $(c,c')$-tangles. Let $\id_c$ indicate the isotopy class of the trivial $(c,c)$-tangle $\{p_1^{(n)}, \ldots,p_n^{(n)}\} \times [0,1] $. Given a $(c,c')$-tangle $\tau_1$ and a $(c',c'')$-tangle $\tau_2$ such that they both meet the boundary of $D^2 \times [0,1]$ orthogonally, their composition is the $(c,c'')$-tangle $\tau_2 \tau_1$ obtained by gluing the two $D^2\times [0,1]$ along the disk corresponding to $c'$ and then shrinking the height of the resulting $D^2 \times [0,2]$ by a factor $2$.
\begin{figure}[H]
    \centering
    \includegraphics[width = 7cm]{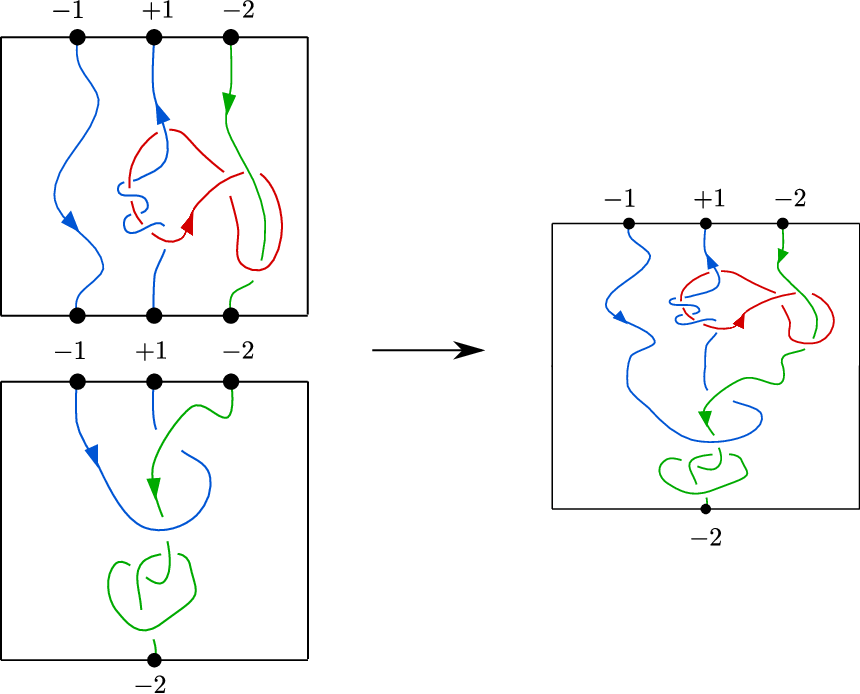}
    \caption{The composition of two coloured tangles. Here $\tau_1$ is a $(c,c')$-tangle with $c=(-2)$ and $c'=(-1,+1,-2)$, while $\tau_2$ is a $(c',c'')$-tangle with $c''=(-1,+1,-2)$.}
    \label{composizionetangle}
\end{figure}
Since in every isotopy class of tangles there is a representative that meets the boundary of $D^2 \times [0,1]$ orthogonally, the composition of tangles induces a composition
$$T_\mu(c,c') \times T_\mu(c',c'') \to T_\mu(c,c'') $$
on the isotopy classes of $\mu$-coloured tangles.

A $\mu$-coloured $(c,c)$-tangle $\tau \subset D^2 \times [0,1]$ is called a braid if the projection of each of the connected components of $\tau$ to $[0,1]$ is a homeomorphism. 

\begin{rmk}
All colourings $c:\{1,\ldots,n\} \to \{\pm 1,\pm 2, \ldots, \pm \mu\}$ as objects, and the isotopy classes of $(c,c')$-tangles as morphisms $c \to c'$ form a category $\Tangles_\mu$ called the category of $\mu$-coloured tangles.
\end{rmk}

\begin{dfn}
Given an endomorphism in $\Tangles_\mu$, that is to say $\tau \in T_\mu(c,c)$, one can define its closure as the $\mu$-coloured link $\widehat{\tau}\subset S^3$ obtained from $\tau$ adding $n$ oriented coloured parallel strands in $S^3 \setminus(D^2 \times [0,1])$ connecting $p_j^{(n)} \times \{1\}$ to $p_j^{(n)} \times \{0\}$ for $j=1,\ldots,n$. Here $n$ denotes the length of the sequence $c$.
\end{dfn}

\begin{figure}[H]
    \centering
    \includegraphics[width = 3 cm]{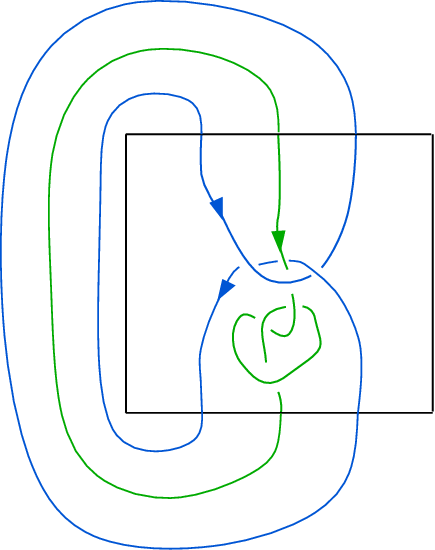}
    \caption{The closure of a tangle.}
\end{figure}

\subsection{The isotropic category}
We introduce the category $\Isotr_\Lambda$ of isotropic relations over a ring $\Lambda$. Fix an integral domain $\Lambda$ endowed with a ring involution $a \mapsto \overline{a}$.

A \emph{skew-Hermitian $\Lambda$-module} $H$ is a finitely generated $\Lambda$-module endowed with a possibly degenerate skew-Hermitian form $\lambda$.

Given a skew-Hermitian $\Lambda$-module $H$, $-H$ will indicate the same module endowed with the opposite form $-\lambda$.

Given a submodule $V$ of a skew-Hermitian $\Lambda$-module $H$, the \emph{annihilator} of $V$ is the submodule
$$ \Ann(V)= \{x \in H \, | \, \lambda(v,x)= 0 \text{ for all } v \in V \}. $$

A submodule $V$ of a skew-Hermitian $\Lambda$-module $H$ is called \emph{totally isotropic} if $V \subset \Ann(V)$ or, equivalently, if $\lambda$ vanishes identically on $V$.


Given two skew-Hermitian $\Lambda$-modules $H_1$ and $H_2$, a totally isotropic submodule $N$ of $(-H_1) \oplus H_2$ is called an isotropic relation and we denote it by $N:H_1 \Longrightarrow H_2$. If $H$ is a skew-Hermitian $\Lambda$-module, an example of an isotropic relation $H \Longrightarrow H$ is the diagonal $\Delta_H=\{(h,h) \in (-H) \oplus H\}$.

If $N_1$ and $N_2$ are two isotropic relations $N_1:H_1 \Longrightarrow H_2$ and $N_2:H_2 \Longrightarrow H_3$ their composition
$$
    N_2 \circ N_1 :H_1 \Longrightarrow H_3 
$$

is defined as 
$$ 
N_2 \circ N_1 = \{(h_1,h_3) \, | \, \text{there exists } h_2 \in H_2 \text{ such that } (h_1,h_2) \in N_1 \text{ and } (h_2, h_3) \in N_2\}.
$$

Isotropic relations can be interpreted as a generalization of unitary isomorphisms: in fact if $\gamma: H_1 \to H_2$ is a unitary isomorphism, that is to say it preserves the skew-Hermitian forms, then its graph $\Gamma_\gamma\subset (-H_1) \oplus H_2$ is an isotropic submodule of $(-H_1)\oplus H_2$ and hence it is an isotropic relation $H_1 \Longrightarrow H_2$.
It is straightforward to see:
\begin{prop}
Hermitan $\Lambda$-modules as objects and isotropic relations as morphisms form a category $\Isotr_\Lambda$.
\end{prop}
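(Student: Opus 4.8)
The plan is to verify the three axioms of a category: that the composite $N_2\circ N_1$ of two isotropic relations is again an isotropic relation, that composition is associative, and that the diagonals $\Delta_H$ serve as two-sided identities.

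For the first point, let $N_1:H_1\Longrightarrow H_2$ and $N_2:H_2\Longrightarrow H_3$. To see that $N_2\circ N_1$ is a submodule of $(-H_1)\oplus H_3$, observe that it is the image, under the $\Lambda$-linear projection $(h_1,h_2,h_3)\mapsto(h_1,h_3)$, of
$$P=\{(h_1,h_2,h_3)\in (-H_1)\oplus H_2\oplus H_3\mid (h_1,h_2)\in N_1,\ (h_2,h_3)\in N_2\},$$
which is itself a submodule, being the intersection of the preimages of $N_1$ and of $N_2$ under the two evident coordinate projections. For total isotropy, take $(h_1,h_3),(h_1',h_3')\in N_2\circ N_1$ together with witnesses $h_2,h_2'\in H_2$. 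Since $N_1$ is totally isotropic in $(-H_1)\oplus H_2$ we have $-\lambda_{H_1}(h_1,h_1')+\lambda_{H_2}(h_2,h_2')=0$, and since $N_2$ is totally isotropic in $(-H_2)\oplus H_3$ we likewise have $-\lambda_{H_2}(h_2,h_2')+\lambda_{H_3}(h_3,h_3')=0$. Adding these identities gives $-\lambda_{H_1}(h_1,h_1')+\lambda_{H_3}(h_3,h_3')=0$; that is, the skew-Hermitian form on $(-H_1)\oplus H_3$ vanishes on $N_2\circ N_1$, so this submodule is totally isotropic.

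Associativity is then a routine unravelling of the existential quantifiers: both $(N_3\circ N_2)\circ N_1$ and $N_3\circ(N_2\circ N_1)$ coincide with $\{(h_1,h_4)\mid\exists\,h_2,h_3\colon (h_1,h_2)\in N_1,\ (h_2,h_3)\in N_2,\ (h_3,h_4)\in N_3\}$. For the identities, $\Delta_H$ was already noted to be an isotropic relation $H\Longrightarrow H$, and for any $N:H_1\Longrightarrow H_2$ one checks directly from the definition of composition --- taking the middle element to be $h_2$ itself --- that $\Delta_{H_2}\circ N=N$, and symmetrically $N\circ\Delta_{H_1}=N$. I do not expect any real obstacle here, the statement being essentially bookkeeping; the only mildly delicate step is the sign cancellation in the total-isotropy argument, which works precisely because $H_2$ enters $N_1$ with its given skew-Hermitian form and $N_2$ with the opposite one.
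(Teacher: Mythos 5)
Your proof is correct: the sign cancellation argument for total isotropy of the composite, the witness-chasing for associativity, and the check that the diagonals act as identities are exactly the routine verifications the paper leaves to the reader (it states the proposition with no proof, as "straightforward to see"). Nothing is missing, and your handling of the form $(-\lambda_{H_1})\oplus\lambda_{H_2}$ versus $(-\lambda_{H_2})\oplus\lambda_{H_3}$ is the one delicate point, done correctly.
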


\subsection{The isotropic functor}

Given a positive integer $n$, let $p_j^{(n)}$ indicate the point $\left(\frac{2j-n-1}{n},0\right)$ in the closed unitary disk $D^2$. Let $\mathcal{N}_n$ be an open tubular neighbourhood of the $n$-uple $\{p_1^{(n)},\ldots,p_n^{(n)}\}$. Let us denote by $D_n$ the space $D^2 \setminus \mathcal{N}_n$. We endow $D_n$ with the orientation inherited from $D^2$ and we fix a basepoint $z \in \partial D^2 \subset D_n$. 

\begin{figure}[H]
    \centering
    \includegraphics[width = 4cm]{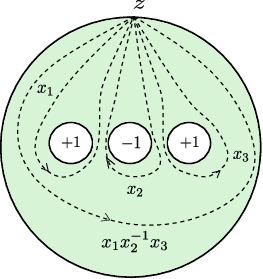}
    \caption{An example of $D_n$ for $\mu=1$.}
    \label{fig:my_label}
\end{figure}

Let $c:\{1,\ldots,n\} \to \{\pm 1, \ldots, \pm \mu\}$ be a map such that for all $i$, $1 \le i \le \mu$, either $+i$ or $-i$ is in the range of $c$. 

The fundamental group $\pi_1(D_n,z)$ is freely generated by $\{x_1^c,\ldots,x_n^c\}$ where $x_j^c$ indicates a simple loop which goes one time around $p_j$ counterclockwise if $\sgn(c(j))=+1$ and otherwise it goes clockwise. We hence obtain a map
\begin{align*}
\psi_c: \pi_1(D_n,z) &\to \langle t_1,\ldots, t_\mu \rangle \simeq \Z^\mu \\
x_j^c & \mapsto t_{|c(j)|}.
\end{align*}

Given $\omega \in T^\mu_*$ we can consider the twisted homology modules $H_*(D_n, \C^{\psi_c, \, \omega})$
and we are also given a skew-Hermitian intersection form $$\lambda_{\C^{\psi_c, \, \omega}}(D_n): H_1(D_n, \C^{\psi_c, \, \omega}) \times H_1(D_n, \C^{\psi_c, \, \omega}) \to \C$$ which we will denote $\lambda_{c, \, \omega}(D_n)$.

For any $\tau \in T_\mu(c,c')=\Hom_{\Tangles_\mu}(c,c')$ let $\mathcal{N}(\tau)$ denote an open tubular neighbourhood of $\tau$ and let $X_\tau = D^2 \times I \setminus \mathcal{N}(\tau)$ be the exterior of the tangle. 
By Mayer-Vietoris we get that $H_1(X_\tau)\simeq \bigoplus_{j=1}^m \Z m_j$ where $m_j$ is the meridian of the $j$-th connected component of $\tau.$ We obtain a map $$\pi_1(X_\tau) \to H_1(X_\tau) \to \langle t_1,\ldots,t_\mu \rangle =\Z^\mu$$ where the first arrow is the Hurewicz map and the second one assigns to $m_j$  the canonical generator $t_{i}$ such that $i$ is the colour of the $j$-th component of the tangle. 
We can then consider $H_*(X_\tau, \C^\omega).$ 

Let $i_c: H_1(D_n, \C^{\psi_c, \, \omega}) \to H_1(X_\tau, \C^\omega)$ and $i_{c'}: H_1(D_{n'}, \C^{\psi_{c'}, \, \omega}) \to H_1(X_\tau, \C^\omega)$ be the maps induced by the inclusion in $X_\tau$.

We denote by $$j_{\tau}:H_1(D_{n}, \C^{\psi_c, \, \omega}) \oplus H_1(D_{n'}, \C^{\psi_c, \, \omega}) \to H_1(X_\tau, \C^\omega)$$ the map given by $j_\tau(x,x')=i_{c'}(x')-i_c(x).$

\begin{thm}
\label{functor}
Let $\omega \in T^\mu_*$. Given $c:\{1,\ldots,n\} \to \{\pm 1, \ldots, \pm \mu \}$ define $$\mathscr{F}_\omega(c):=(H_1(D_n, \C^{\psi_c, \, \omega}), \langle \, , \rangle_c),$$ while for every tangle $\tau \in T_\mu(c,c')$ define $$\mathscr{F}_{\omega}(\tau):= \ker j_{\tau}.$$
Then $\mathcal{F}_\omega$ defines a functor $$\mathscr{F}_{\omega}:\Tangles_\mu \to \Isotr_{\mathbb{C}}.$$ 
\end{thm}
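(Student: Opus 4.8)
The plan is to verify the two functor axioms: that $\mathscr{F}_\omega$ sends each object to a genuine object of $\Isotr_{\mathbb{C}}$ and each morphism to a genuine morphism (an isotropic relation), that it respects identities, and that it respects composition. The first point is largely bookkeeping: $H_1(D_n;\C^{\psi_c,\omega})$ is finite-dimensional by a standard twisted-homology argument (the twisted chain complex of the finite CW pair is finitely generated over $\C$), and $\lambda_{c,\omega}(D_n)$ is skew-Hermitian by the general fact recorded in Example \ref{strutturaComega} for odd middle dimension. To see $\mathscr{F}_\omega(\tau)=\ker j_\tau$ is an isotropic relation $\mathscr{F}_\omega(c)\Rightarrow\mathscr{F}_\omega(c')$, I would use the Mayer--Vietoris/excision decomposition of $X_\tau$ along the two boundary disks $D_n\times\{0\}$ and $D_{n'}\times\{1\}$: the intersection form on $H_1(\partial_- \sqcup \partial_+)$ is $(-\lambda_{c,\omega})\oplus\lambda_{c',\omega}$ (the sign coming from the reversed boundary orientation, which is exactly why we land in $(-H_1)\oplus H_2$), and the kernel of the map to $H_1(X_\tau)$ is isotropic because two cycles in the boundary that both bound in $X_\tau$ have zero linking/intersection — concretely, if $x,y\in\ker j_\tau$ then $\lambda_{\partial X_\tau}(x,y)$ computes as an intersection number in $X_\tau$ of a chain bounding $x$ with $y$, which vanishes since $y$ is itself a boundary. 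This is the standard ``Lagrangian boundary'' principle and should be cited or proved in a short lemma.

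Next, for the identity: $X_{\id_c}$ deformation retracts onto $D_n\times\{0\}\simeq D_n$ (the trivial tangle exterior is a product), so $i_c$ and $i_{c'}$ are both the same isomorphism $H_1(D_n;\C^{\psi_c,\omega})\xrightarrow{\sim}H_1(X_{\id_c};\C^\omega)$, and hence $j_{\id_c}(x,x')=i_{c'}(x')-i_c(x)$ vanishes precisely when $x=x'$; thus $\ker j_{\id_c}=\Delta_{\mathscr{F}_\omega(c)}$, as required. The functoriality on isotopy classes (i.e.\ that $\mathscr{F}_\omega(\tau)$ depends only on the isotopy class of $\tau$) is immediate since an ambient isotopy rel boundary induces a homeomorphism of exteriors commuting with the inclusions of the end-disks.

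The main work — and the main obstacle — is compositionality: given $\tau_1\in T_\mu(c,c')$ and $\tau_2\in T_\mu(c',c'')$, one must show $\ker j_{\tau_2\tau_1}=\ker j_{\tau_2}\circ\ker j_{\tau_1}$. Here $X_{\tau_2\tau_1}=X_{\tau_1}\cup_{D_{n'}}X_{\tau_2}$ is glued along the middle disk $D_{n'}$, so I would run the Mayer--Vietoris sequence
\begin{equation*}
H_1(D_{n'};\C^{\psi_{c'},\omega})\xrightarrow{(i,i)} H_1(X_{\tau_1};\C^\omega)\oplus H_1(X_{\tau_2};\C^\omega)\to H_1(X_{\tau_2\tau_1};\C^\omega)\to H_0(D_{n'};\C^{\psi_{c'},\omega})
\end{equation*}
and chase it. The inclusion ``$\supseteq$'' is a diagram chase: if $(x,x'')$ lies in the composite relation, witnessed by some $x'\in H_1(D_{n'};\C^\omega)$ with $i_{c'}^{(1)}(x')=i_c(x)$ in $H_1(X_{\tau_1})$ and $i_{c'}^{(2)}(x')=i_{c''}(x'')$ in $H_1(X_{\tau_2})$, then the images of $x$ and $x''$ in $H_1(X_{\tau_2\tau_1})$ agree. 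The reverse inclusion ``$\subseteq$'' is the delicate one: it requires that if $x$ and $x''$ become equal in the glued-up exterior, then the gluing can be ``de-assembled'' to produce a common intermediate class $x'$ — this is where the exactness of Mayer--Vietoris (the connecting map landing in $H_0$, which is controlled because $\C^{\psi_{c'},\omega}$ has $\omega\neq 1$ in the relevant coordinate, so $H_0(D_{n'};\C^{\psi_{c'},\omega})$ vanishes or is suitably small) is essential; one must be careful that the condition $\omega\in T_*^\mu$ and that each colour appears in $c'$ guarantees the twisted $H_0$ behaves well. I would isolate this as the key lemma and prove it by an explicit Mayer--Vietoris chase, treating the $H_0$ term with care.
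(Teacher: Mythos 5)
Your isotropy argument is essentially the paper's: the paper likewise reduces to the standard Poincaré-duality fact that $\ker\bigl(H_1(\partial X_\tau;\C^\omega)\to H_1(X_\tau;\C^\omega)\bigr)$ is isotropic, after checking that the inclusion $(-D_n)\sqcup D_{n'}\hookrightarrow\partial X_\tau$ is compatible with the twisted intersection forms. Do note that you must factor through the \emph{full} boundary $\partial X_\tau$ (which also contains the vertical annulus and the tube around $\tau$), not just the two end disks; the paper does this explicitly via the maps $k$, $\varphi$ and the sign-flip $\psi$, and your sketch should be phrased the same way. Where you genuinely differ is functoriality: the paper disposes of composition in one sentence, citing an argument ``similar to Cimasoni--Turaev, Lemma 3.4'', whereas you spell out the Mayer--Vietoris chase for $X_{\tau_2\tau_1}=X_{\tau_1}\cup_{D_{n'}}X_{\tau_2}$ and check the identity via the product structure of $X_{\id_c}$. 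This is in substance what the cited lemma does over $\Lambda_\mu$, so your route makes the proof self-contained at the cost of length; both are valid.

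One correction to your composition step: the ``delicate'' inclusion $\ker j_{\tau_2\tau_1}\subseteq \ker j_{\tau_2}\circ\ker j_{\tau_1}$ follows from exactness of Mayer--Vietoris at the middle term $H_1(X_{\tau_1};\C^\omega)\oplus H_1(X_{\tau_2};\C^\omega)$: if $i_{c''}(x'')-i_c(x)=0$ in $H_1(X_{\tau_2\tau_1};\C^\omega)$, then the pair $\bigl(i_c^{(1)}(x),\,i_{c''}^{(2)}(x'')\bigr)$ lies in the kernel of the gluing map and hence comes from a class $x'\in H_1(D_{n'};\C^{\psi_{c'},\omega})$, which is exactly the required intermediate witness. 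Neither the connecting map nor any control of $H_0(D_{n'};\C^{\psi_{c'},\omega})$ enters, so no extra hypothesis on $\omega$ is needed --- which is as it must be, since the theorem is asserted for every $\omega\in T^\mu_*$ with no condition of the form $I_\omega(c')\neq 1$; your worry on this point is a misdiagnosis rather than a gap.
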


\begin{proof}

The pair $(H_1(D_n; \C^{\psi_c, \, \omega}), \lambda_{c,\omega}(D_n))$ is clearly an object of the isotropic category.

Let us fix $\tau \in T_\mu(c,c')$ and let us show that $\F_\omega(\tau)=\ker j_{\tau}$ is a totally isotropic subspace of $(-H_1(D_n; \C^{\psi_c, \, \omega}))\oplus H_1(D_{n'}; \C^{\psi_{c'}, \, \omega})$. Denote by $\lambda$ the form $\lambda_{\psi_c, \, \omega}(D_n) $ and by $\lambda'$ the form $\lambda_{c', \, \omega}(D_{n'})$. \\ For any fixed $(x,x'), (y,y') \in \ker j_\tau$, consider $(-\lambda) \oplus \lambda'((x,x'),(y,y'))=-\lambda(x,y)+\lambda'(x',y').$

Let us denote by $\Sigma$ the boundary $\partial X_{\tau}$ of the exterior of the tangle and consider the following orientation preserving inclusions:
$$i:-D_n \to \Sigma$$
$$i':D_{n'} \to \Sigma. $$

The inclusion of $\Sigma$ in $X_\tau$ induces twisted homology modules $H_*(\Sigma;M)$ and a twisted intersection form $\lambda_{\C^\omega}(\Sigma)$.

Notice that the inclusion $-D_n \sqcup D_{n'} \to \Sigma$ is orientation preserving and the forms $-\lambda \oplus \lambda'$ and $ \lambda_{\C^\omega}(\Sigma)$ are compatible via the inclusion.

Let $\varphi:H_1(\Sigma; \C^\omega) \to H_1( X_\tau; \C^\omega)$ be the inclusion induced map and let $k$ be the map 
\begin{align*}
k:H_1(D_{n}; \C^{\psi_c, \, \omega}) \oplus H_1(D_{n'}; \C^{\psi_c, \, \omega}) & \to H_1(\Sigma; \C^\omega) \\
(x,x') &\mapsto i'_*(x')-i_*(x).
\end{align*}
Then we have $$\ker j_\tau = \ker(\varphi \circ k)$$ 


We shall now check that $\ker(\varphi \circ k)$ is totally isotropic. \\
Fix $(x,x'), (y,y') \in \ker(\varphi \circ k)$. We have that $ k(x,x')$ and  $k(y,y')$ belong to $\ker \varphi$. It is a well-known consequence of Poincaré duality that, given a compact $(2k+1)$-manifold $X$ with boundary, the kernel in $H_k(\partial X; \C^\omega)$ of the map induced by the inclusion of the boundary is totally isotropic (actually Lagrangian, see Subsection \ref{finito}). Hence in our case

$$ \lambda_{\C^\omega}(\Sigma) (k(x,x'), k(y,y'))=0$$
and then clearly $-\lambda(x,y)+\lambda'(x',y')=0$. This shows that $\ker(j_\tau)$ is totally isotropic.


The proof of the functoriality follows from an argument similar to the one in \cite{CimasoniTuraev}, Lemma 3.4.

\end{proof}

\section{Non-additivity of multivariate signature}
\label{sec-4}
We are finally ready to state and prove the extension of the result by Cimasoni and Conway \cite{CimasoniConway} and we devolve the first part of this section to this scope. The second part focuses on studying the case of coloured braids and we see that in this situation the formula of Cimasoni and Conway is exactly the multivariable version of the Gambaudo and Ghys formula.

For $\tau$ a $(c,c')$-tangle, let $\overline{\tau}$ denote the $(c',c)$-tangle obtained by reflecting $\tau$ along a horizontal plane and inverting the orientation.
For $j \in \{1,\ldots \mu\}$ let $A_j$ be the set of $k \in \{1,\ldots, n\}$ such that $c(k)= \pm j$. Define 
$$i_j = \sum_{k \in A_j} \sgn c(k). $$
\begin{thm}
\label{main}
Let $c$ be an object in $\Tangles_\mu$. For any $(c,c)$-tangles $\tau_1, \tau_2$
$$\sigma_\omega(\widehat{\tau_1\tau_2}) -\sigma_\omega(\widehat{\tau}_1)-\sigma_\omega(\widehat{\tau}_2) = \Maslov(\F_\omega(\overline{\tau}_1), \Delta, \F_\omega(\tau_2)) $$
for all $\omega\in T^\mu_*$ such that $\prod_{j=1}^\mu \omega_j^{i_j} \neq 1.$
\end{thm}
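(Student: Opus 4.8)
The plan is to realize both sides of the equation as signatures of twisted intersection forms on 4-manifolds, and then apply the Novikov–Wall non-additivity theorem. First I would build the relevant 4-manifolds: for a $(c,c)$-tangle $\tau$, choose a coloured bounding surface $F_\tau$ in $D^4$ for the closure $\widehat{\tau}$ whose exterior $W_{F_\tau}$ carries the twisted intersection form whose signature is $\sigma_\omega(\widehat\tau)$ by Theorem \ref{CNT}. The key geometric observation is that these exteriors can be taken to be built from the tangle exterior $X_\tau = D^2\times I \setminus \mathcal N(\tau)$ (crossed with an interval, or suitably thickened) glued along copies of $D_n \times (\text{something})$; concatenating $\tau_1$ and $\tau_2$ corresponds to gluing the piece for $\tau_1$ to the piece for $\tau_2$ along a copy of $D_n$, producing the 4-manifold $Y$ associated to $\widehat{\tau_1\tau_2}$. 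The separating hypersurface $X_0$ in the Novikov–Wall setup is exactly this copy of $D_n$ (thickened to dimension 3), and $Y_-$, $Y_+$ are the two tangle pieces.

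Next I would compute the three Lagrangians appearing in Wall's formula. By the Novikov–Wall theorem, $\sign_\omega(Y) - \sign_\omega(Y_-) - \sign_\omega(Y_+) = \Maslov(L_-, L_0, L_+)$, where $L_\varepsilon = \ker(H_1(Z;\C^\omega) \to H_1(X_\varepsilon;\C^\omega))$ and $Z = \partial D_n$ (appropriately interpreted). The task is to identify $L_0$ with $\Delta$ (the diagonal in $(-\mathscr F_\omega(c)) \oplus \mathscr F_\omega(c)$), and $L_-$, $L_+$ with $\mathscr F_\omega(\overline\tau_1)$ and $\mathscr F_\omega(\tau_2)$ respectively, under the identification $H_1(Z;\C^\omega) \cong (-\mathscr F_\omega(c)) \oplus \mathscr F_\omega(c)$ coming from the two boundary components of the separating region. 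For $L_0$: the middle piece corresponds to the trivial tangle (the gluing region), so its kernel is the diagonal. For $L_+$: the piece $X_+$ is precisely the tangle exterior $X_{\tau_2}$, so $L_+ = \ker j_{\tau_2} = \mathscr F_\omega(\tau_2)$ by definition of the functor in Theorem \ref{functor}. For $L_-$: the piece $X_-$ is the exterior of $\tau_1$ but appearing with reversed orientation/side in the decomposition, which I would identify with the reflected tangle $\overline\tau_1$, so that $L_- = \mathscr F_\omega(\overline\tau_1)$; care with orientations and the sign conventions on the forms (the $(-H_1) \oplus H_1$ versus $H_1 \oplus (-H_1)$ issue) is needed here, possibly absorbed using the permutation property of the Maslov index in Remark \ref{propertymaslov}.

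The remaining ingredient is the hypothesis $\prod_{j=1}^\mu \omega_j^{i_j} \neq 1$: I expect this condition is exactly what guarantees that $H_1(D_n;\C^{\psi_c,\omega})$ has the expected dimension (no jump in twisted homology), equivalently that $H_0(D_n;\C^{\psi_c,\omega}) = 0$, so that the closure construction behaves homologically as expected and the boundary identifications are clean; I would verify this by a direct computation of the twisted chain complex of $D_n$, which is a wedge of circles with an explicit $\pi_1$-action. \textbf{The main obstacle} I anticipate is the careful bookkeeping in identifying the boundary pieces $X_\pm$, $X_0$ of the glued 4-manifold with the tangle exteriors $X_{\tau_i}$ and $X_{\overline\tau_1}$ — in particular, checking that the twisted homology of the separating 3-manifold $Z$ splits as claimed, that the Poincaré–Lefschetz duality identifications of $L_\varepsilon$ as Lagrangians are compatible with the skew-Hermitian forms on the $H_1(D_n;\C^\omega)$ factors (up to the correct signs), and that the 4-dimensional gluing genuinely produces a bounding surface exterior for $\widehat{\tau_1\tau_2}$ so that Theorem \ref{CNT} applies to all three of $Y$, $Y_-$, $Y_+$ simultaneously. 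Once these identifications are in place, the theorem follows by substituting into Wall's formula.
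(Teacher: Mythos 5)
Your starting point---realizing the signatures $4$-dimensionally via Theorem \ref{CNT} and invoking Novikov--Wall---is the right one, but the central geometric claim of your plan fails, and this is a genuine gap. You propose a \emph{single} application of Wall's theorem to a manifold $Y$ computing $\sigma_\omega(\widehat{\tau_1\tau_2})$, split along a thickened copy of $D_n$ into pieces $Y_\pm$ computing $\sigma_\omega(\widehat{\tau}_1)$ and $\sigma_\omega(\widehat{\tau}_2)$. However, $\widehat{\tau}_1$ and $\widehat{\tau}_2$ are not visible inside $\widehat{\tau_1\tau_2}$: the strands that close up $\tau_1$ in $\widehat{\tau}_1$ run through $\tau_2$ in $\widehat{\tau_1\tau_2}$, so cutting the exterior of a coloured bounding surface for $\widehat{\tau_1\tau_2}$ along a separating $3$-manifold yields pieces that are exteriors of surfaces bounded by \emph{tangles} (with boundary arcs on the cut), not exteriors of coloured bounding surfaces for the closed links $\widehat{\tau}_i$; Theorem \ref{CNT} does not apply to them, so $\sign_\omega(Y_\pm)\neq\sigma_\omega(\widehat{\tau}_i)$ in general. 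Conversely, gluing $W_{F_{\tau_1}}$ and $W_{F_{\tau_2}}$ along a thickened $D_n$ does not produce a bounding-surface exterior for $\widehat{\tau_1\tau_2}$: the portions of $F_{\tau_i}$ attached to the closure strands of $\widehat{\tau}_i$ have no counterpart in $\widehat{\tau_1\tau_2}$. This is precisely why the paper (following Gambaudo--Ghys and Cimasoni--Conway) works over a pair of pants: it builds a middle piece $P(\tau_1,\tau_2)\subset D^2\times P$ whose three ends meet the exteriors of $\widehat{\tau}_1$, $\widehat{\tau}_2$ and $\widehat{\tau_1\tau_2}$ and whose twisted signature carries the Maslov term (Proposition \ref{PMaslov}), separate pieces $C(\tau)$ which \emph{are} bounding-surface exteriors realizing $\sigma_\omega(\widehat{\tau})$ exactly (Theorem \ref{Ctau}), and then glues them into $M(\tau_1,\tau_2)$ and shows $\sign_\omega(M(\tau_1,\tau_2))\simeq 0$ by a further cut along the curve $\gamma$ together with the symmetry trick $\tau_2=\id_c$. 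None of this structure is recoverable from a single decomposition $Y=Y_-\cup_{X_0}Y_+$.

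A second missing ingredient is quantitative: in any such construction the gluings produce additional Novikov--Wall corrections from corner pieces (e.g.\ the tori $\partial D^2\times S^1$ and $\partial D^2\times\partial P$), which do not vanish and can only be bounded independently of the tangles. The paper absorbs these through the reduction step (Lemma \ref{add} and Proposition \ref{reduction}): a doubling argument showing that an identity valid up to a uniformly bounded error must hold exactly. Your plan has no analogue of this, so even granting the identifications of the kernels $L_0=\Delta$, $L_+=\F_\omega(\tau_2)$, $L_-=\F_\omega(\overline{\tau}_1)$ (which themselves require the argument of Proposition \ref{PMaslov}, including Lemma \ref{zero} and the vanishing of the twisted homology of $\partial D_n$), you would only obtain the formula up to uncontrolled correction terms. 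Finally, your reading of the hypothesis $\prod_j\omega_j^{i_j}\neq 1$ is close to but not exactly its role in the paper: it forces $H_*(\partial D_n;\C^{\psi_c,\omega})=0$, hence non-degeneracy of the twisted form on $H_1(D_n;\C^{\psi_c,\omega})$ (Lemma \ref{nondeg}), which is what the kernel identifications, Proposition \ref{ort} and the reduction rely on; the examples at the end of the paper show it cannot be removed.
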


Here $\Delta$ indicates the diagonal of $H_1(D_n; \C^{\psi_c,\omega}).$

The strategy will be to employ the Novikov-Wall non-additivity theorem and a $4$-dimensional interpretation of the multivariate signature, studied by Conway, Nagel and Toffoli in \cite{ConwayNagelToffoli} and stated in Theorem \ref{CNT}, in order to construct a $4$-manifold which has twisted signature 
$$\sigma_\omega(\widehat{\tau_1\tau_2}) -\sigma_\omega(\widehat{\tau}_1)-\sigma_\omega(\widehat{\tau}_2) - \Maslov(\F_\omega(\overline{\tau}_1), \Delta, \F_\omega(\tau_2)) $$
and then prove that this manifold must have zero signature.

\subsection{Proof of the main theorem}

We start by proving an important lemma which explains the restriction on the $\omega$'s in Theorem \ref{main}. We will see in Subsection \ref{finito} that this restriction is necessary for the equality in Theorem \ref{main} to hold.
\begin{lem}
\label{nondeg}
Let $c$ be an object of $\Tangles_\mu$. Let $l(c)=(i_1,\ldots,i_\mu)\in\Z^\mu$, where $$i_j= \sum_{k\in A_j} \sgn c(k) .$$
Then for every $\omega =(\omega_1,\ldots, \omega_\mu) \in T^\mu_*$ such that 
$$ \prod_{j=1}^\mu \omega_j^{i_j} \neq 1 $$
the form $\lambda_{\C^{\psi_c, \, \omega}}(D_n)$ is non-degenerate. 
\end{lem}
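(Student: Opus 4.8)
The plan is to compute the twisted homology of $D_n$ directly and identify the radical of the intersection form with the image of $H_1(\partial D_n;\C^\omega)$, then show that image is trivial precisely under the stated hypothesis. Since $D_n$ is homotopy equivalent to a wedge of $n$ circles, the twisted chain complex $C_*(D_n;\C^{\psi_c,\omega})$ has the form $\C \xrightarrow{\partial_1} \C^n$ where $\partial_1$ records, on the generator associated to the $j$-th boundary circle of $\mathcal N_n$, the value $(\omega_{|c(j)|}^{\pm 1}-1)$ coming from the loop $x_j^c$ (up to sign depending on $\sgn c(j)$). Because each $\omega_i\neq 1$, every such entry is nonzero, so $\partial_1$ is injective, $H_0(D_n;\C^{\psi_c,\omega})=0$, and $H_1(D_n;\C^{\psi_c,\omega})\cong\C^{n-1}$. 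One should set this up carefully, perhaps via the Mayer–Vietoris / wedge decomposition, or by using the standard presentation of the chain complex of a surface with boundary.

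**Identifying the radical.** By the general fact recalled in Example \ref{strutturaComega}, the radical of $\lambda_{\C^{\psi_c,\omega}}(D_n)$ is the image of the inclusion-induced map $H_1(\partial D_n;\C^\omega)\to H_1(D_n;\C^{\psi_c,\omega})$. The boundary $\partial D_n$ consists of the outer circle $\partial D^2$ together with the $n$ small circles around the $p_j^{(n)}$. On each small circle around $p_j$, the restricted coefficient system sends the core loop to $\omega_{|c(j)|}^{\pm 1}\neq 1$, hence $H_1$ of each such circle with these coefficients vanishes; so the only contribution to the radical comes from the outer boundary component $\partial D^2$. The monodromy of $\C^\omega$ around $\partial D^2$ is multiplication by $\prod_{j=1}^n \omega_{|c(j)|}^{\sgn c(j)} = \prod_{j=1}^\mu \omega_j^{\,i_j}$, by definition of $i_j$. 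Therefore $H_1(\partial D^2;\C^\omega)=0$ exactly when this product is $\neq 1$, and in that case the radical of $\lambda_{\C^{\psi_c,\omega}}(D_n)$ is zero, i.e.\ the form is non-degenerate. Conversely, when the product equals $1$, the outer circle contributes a nontrivial class which (since $D_n$ deformation retracts onto it... more precisely since that class is non-nullhomologous) lies in the radical, so the form degenerates — this gives the converse half promised for Subsection \ref{finito}.

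**Main obstacle.** The routine part is the chain-level computation; the one point requiring care is verifying that the class coming from $\partial D^2$ actually survives to a nonzero element of $H_1(D_n;\C^{\psi_c,\omega})$ when $\prod_j\omega_j^{i_j}=1$ (needed only for the converse), and — more importantly for the stated lemma — confirming that the image of $H_1(\partial D_n;\C^\omega)$ genuinely exhausts the radical, which is exactly the Poincaré–Lefschetz duality statement quoted in Example \ref{strutturaComega} applied to the surface-with-boundary $D_n$. One must also be slightly careful that the coefficient module restricted to each boundary component is the correct one-dimensional $\Z[\pi_1(\partial\text{-component})]$-module, which follows since each boundary circle injects $\pi_1$-wise (for the outer circle) or maps to the generator $t_{|c(j)|}$ (for the inner circles). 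Once these are in place, the conclusion is immediate: under $\prod_{j=1}^\mu\omega_j^{i_j}\neq 1$ the radical is $0$ and $\lambda_{\C^{\psi_c,\omega}}(D_n)$ is non-degenerate.
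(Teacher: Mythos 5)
Your proposal is correct and follows essentially the same route as the paper: identify the radical of $\lambda_{\C^{\psi_c,\omega}}(D_n)$ with the image of $H_1(\partial D_n;\C^\omega)$, then kill each boundary circle's twisted $H_1$ by computing its monodromy --- $\omega_{|c(j)|}^{\pm 1}\neq 1$ for the small circles and $\prod_j\omega_j^{i_j}\neq 1$ for the outer one. The additional computation of $H_1(D_n;\C^{\psi_c,\omega})$ and the sketch of the converse are not needed for the lemma, but they do not affect the argument.
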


\begin{proof}
As noted in subsection \ref{twistedhomology}, the radical of this form is the image of the inclusion induced map $H_1(\partial D_n; \C^{\psi_c, \, \omega}) \to H_1(D_n; \C^{\psi_c, \, \omega})$. 

Since $\partial D_n= S_1 \sqcup \ldots \sqcup S_{n+1}$ is disconnected, 
$$ H_1(\partial D_n; \C^{\psi_c, \, \omega}) = \bigoplus _{j=1}^{n+1} H_1 (S_j; \C^\omega).$$

Assume that for $j \le n$ the component $S_j$ is the one corresponding to the $j$-th puncture $p_j$ and that $S_{n+1}$ is $\partial D^2$.

We can see $S_j$ as a CW complex with one $0$-cell $P_j$ and one $1$-cell $e_j$.
For every $j$ the free $\Z[\pi_1(S_j)]$-modules $C_k(\widetilde{S_j})$ are generated by a lift of the $k$-cells in $S_j$, hence $C_0(\widetilde{S_j}) \simeq \Z[\pi_1(S_j)]$ is generated by a lift $\widetilde{P}_j$ of the $0$-cell and $C_1(\widetilde{S_j}) \simeq \Z[\pi_1(S_j)]$ generated by a lift $\widetilde{e}_j$ of the $1$-cell which starts at $\tilde{P}_j$, while $C_k(\widetilde{S}_j)=0$ for $k>1$. 

Recall that $\psi_c: \pi_1(D_n) \to \C$ was defined by sending the generator $x_i$ to $\omega_i$, where $x_i$ is counterclockwise oriented if $\sgn c(i) =+1$ and clockwise oriented otherwise.
The $\Z[\pi_1(D_n)]$-module $\C^{\psi_c, \, \omega}$ can be seen as a $\Z[\pi_1(S_j)]$-module by restricting the action of $\Z[\pi_1(D_n)]$:
the inclusions (and the ``obvious'' choice of path connecting the basepoint $z=P_{n+1}$ for $D_n$ with the basepoint for $S_j$) induce maps
\begin{align*}
\pi_1(S_j, P_j) &\to \pi_1(D_n,z) \\
t_j &\to x_j^{\sgn c(j)}
\end{align*}
for all $j\le n$, where $t_j$ indicates the generator of $\pi_1(S_j)$ counterclockwise oriented. 

For $j=n+1$ the map is
\begin{align*}
\pi_1(S_{n+1}, z) &\to \pi_1(D_n,z) \\
t_{n+1} &\to \prod_{j=1}^n x_j^{\sgn c(j)}
\end{align*}
where $t_{n+1}$ indicates the generator of $\pi_1(S_{n+1})$ counterclockwise oriented. 

We write $ \Z[\pi_1(S_j)] = \Z[t_j^{\pm 1}]$. The chain complex $\C^\omega \otimes_{\Z[t_j^{\pm 1}]} C_*(\widetilde{S}_j)$ is
$$ 0 \longrightarrow \C \xrightarrow{\id \otimes \partial} \C \longrightarrow 0 $$
since $\C^\omega \otimes_{\Z[t_j^{\pm 1}]}\Z[t_j^{\pm 1}] = \C $ as complex vector spaces. 

Since $\partial (\widetilde{e}_j)= t_j \widetilde{P}_j-\widetilde{P}_j$, the map $\id \otimes \partial$ is multiplication by $\omega_j^{\sgn c(j)}-1$ if $j \le n$ and by $\left(\prod_{k=1}^\mu \omega_k^{i_k}\right)-1$ if $j=n+1$. Therefore, when $ \prod_{k=1}^\mu \omega_k^{i_k} \neq 1 $ the map
$\id \otimes \partial$ is an isomorphism for all $S_j$ and $H_1(S_j; \C^\omega)=0$ for $1 \le j \le n+1$.

Hence in our setting $H_1(\partial D_n; \C^{\psi_c, \, \omega})=\{0\}$ and the radical of the twisted form on $D_n$ is trivial.

\end{proof}

\begin{rmk}
Given an object in $\Tangles_\mu$ and $\omega \in T^\mu_*$ we will denote from now on by $I_c(\omega)$ the complex number $\prod_{j=1}^\mu \omega_j^{i_j}$, where $$i_j= \sum_{k \in A_j} \sgn c(k).$$

For every $\omega \in T^\mu_*$ we will denote the full subcategory of $\Tangles_\mu$ with objects the set of $c:\{1,\ldots, n\} \to \{\pm 1, \ldots , \pm \mu\}$ such that $I_c(\omega) \neq 1$ by $\Tangles^\omega_\mu.$
\end{rmk}

Given two objects $c, c'$ in $\Tangles_\mu$, by $c \sqcup c'$ we will mean the object of $\Tangles_\mu$ obtained by juxtaposition of $c$ and $c'$. Similarly given two tangles $\tau, \tau'$, by $\tau \sqcup \tau'$ we will mean the tangle obtained by juxtaposition of the two tangles. 

\begin{prop}
\label{ort}
Let $\tau_1$ be a $(c_1,c_1')$-tangle and $\tau_2$ a $(c_2,c_2')$-tangle. 
Let $\omega \in T^\mu_*$ be such that $\tau_1$ and $\tau_2$ are morphisms in the category $\Tangles_\mu^\omega$. Then we have

\begin{itemize}
    \item $H_1(D_{n_1 + n_2};\C^{\psi_{c_1\sqcup c_2}, \omega}) \simeq H_1(D_{n_1};\C^{\psi_{c_1}, \omega}) \oplus H_1(D_{ n_2};\C^{\psi_{ c_2}, \omega}) \oplus \C$ and also \newline $H_1(D_{n'_1 + n'_2};\C^{\psi_{c'_1\sqcup c'_2}, \omega}) \simeq H_1(D_{n'_1};\C^{\psi_{c'_1}, \omega}) \oplus H_1(D_{ n'_2};\C^{\psi_{c'_2}, \omega}) \oplus \C$ as complex vector spaces endowed with a skew-Hermitian form;
    \item $\F_\omega(\tau_1 \sqcup \tau_2) \simeq \F_\omega(\tau_1) \oplus \F_\omega(\tau_2) \oplus \Delta_{\C}$, where $ \Delta_{\C}$ is the diagonal of $\C$;
\end{itemize}

\end{prop}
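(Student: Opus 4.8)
The plan is to compute the twisted homology of $D_{n_1+n_2}$ by a Mayer--Vietoris argument, splitting the punctured disk $D_{n_1+n_2}$ into two pieces that deformation retract onto (copies of) $D_{n_1}$ and $D_{n_2}$. Concretely, place the $n_1$ punctures of $c_1$ on the left half of $D^2$ and the $n_2$ punctures of $c_2$ on the right half, and cut along a vertical arc $\gamma$ separating the two groups of punctures. The left piece $U$ retracts onto a disk with $n_1$ punctures and the right piece $V$ onto a disk with $n_2$ punctures, while $U\cap V$ is a disk neighbourhood of $\gamma$, hence contractible downstairs but \emph{not} $\psi_{c_1\sqcup c_2}$-connected: its twisted homology is that of a circle with monodromy $I_{c_1}(\omega)$ (the total loop around the left punctures). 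Since $I_{c_1}(\omega)\neq 1$ — this is where the hypothesis that both tangles lie in $\Tangles^\omega_\mu$ enters, exactly as in Lemma \ref{nondeg} — we get $H_0(U\cap V;\C^\omega)=0$ and $H_1(U\cap V;\C^\omega)\cong\C$. The Mayer--Vietoris sequence then collapses to a short exact sequence $0\to \C\to H_1(D_{n_1};\C^{\psi_{c_1},\omega})\oplus H_1(D_{n_2};\C^{\psi_{c_2},\omega})\to H_1(D_{n_1+n_2};\C^{\psi_{c_1\sqcup c_2},\omega})\to 0$ wait — more carefully, the connecting map lands in $H_0(U\cap V)=0$, so in fact $H_1$ of the union splits as the direct sum of the two pieces plus a $\C$-summand $A$ coming from $H_1(U\cap V)$; I would track the maps to see $A$ is the image of the separating circle, a one-dimensional subspace on which the intersection form is determined (and the whole splitting is orthogonal because the left and right punctured disks are disjoint after the retraction, meeting only along $\gamma$). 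The same argument applied to $c_1'\sqcup c_2'$ gives the second isomorphism with its own line $A'$.

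For the second bullet I would feed these decompositions into the definition $\F_\omega(\tau)=\ker j_\tau$. The exterior $X_{\tau_1\sqcup\tau_2}$ likewise splits, by a vertical cutting 3-ball, into $X_{\tau_1}$ and $X_{\tau_2}$ glued along a piece that retracts onto $D_{n_1+n_2}\cap(\text{cutting plane})$ — a disk punctured by the strands crossing the plane, which for a juxtaposition is just the trivial strands, so this gluing region contributes only the diagonal copies of $A$ and $A'$. Running Mayer--Vietoris for $X_{\tau_1\sqcup\tau_2}$ in parallel with the boundary computation, and chasing the map $j_{\tau_1\sqcup\tau_2}$ through the direct-sum decompositions of source and target, the kernel visibly decomposes as $\ker j_{\tau_1}\oplus\ker j_{\tau_2}$ plus the contribution of the $A$, $A'$ summands; on the latter $j$ restricts to $(a,a')\mapsto a'-a$ under the identifications, so its kernel is exactly $\Delta_{A\cap A'}$. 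The orthogonality of all these summands follows from the geometric disjointness after retraction, as in the proof of Theorem \ref{functor}.

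The main obstacle I anticipate is bookkeeping the identifications carefully enough to see that the extra summands $A$ and $A'$ really are complex \emph{lines} (rather than larger spaces) and that they sit \emph{orthogonally} with respect to the skew-Hermitian forms — in the untwisted world the separating circle would be nullhomologous and contribute nothing, so the appearance of these lines is precisely a twisted phenomenon governed by $I_{c_1}(\omega)\neq 1$, and one has to be sure the monodromy computation is right. A secondary subtlety is checking that under the Mayer--Vietoris identification the restriction of $j_{\tau_1\sqcup\tau_2}$ to the $A\oplus A'$ block is genuinely the difference map $(a,a')\mapsto a'-a$ (matching the sign convention $j_\tau(x,x')=i_{c'}(x')-i_c(x)$), since that is what forces the $\Delta_{A\cap A'}$ rather than, say, $A\cap A'$ embedded diagonally with some twist. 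Once those two points are pinned down the rest is routine diagram-chasing.
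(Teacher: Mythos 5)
Your overall strategy (decompose $D_{n_1+n_2}$ and $X_{\tau_1\sqcup\tau_2}$ into the pieces for $\tau_1$ and $\tau_2$ and run Mayer--Vietoris, then chase $j_{\tau_1\sqcup\tau_2}$ through the splittings) is the same as the paper's, but the key homological computation is wrong, and this is exactly the point where your argument breaks. If you cut along an arc $\gamma$ separating the two groups of punctures, the gluing region $U\cap V$ is a band around $\gamma$, which is \emph{simply connected}; the local system $\C^{\psi_{c_1\sqcup c_2},\omega}$ restricted to it is therefore trivial, so $H_0(U\cap V;\C^\omega)\cong\C$ and $H_1(U\cap V;\C^\omega)=0$. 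There is no ``monodromy $I_{c_1}(\omega)$'' attached to an arc -- that would be the situation if you cut along a circle enclosing the left punctures, which is not what you do. With your (incorrect) values the Mayer--Vietoris sequence would present $H_1(D_{n_1+n_2};\C^\omega)$ as a \emph{quotient} of $H_1(D_{n_1};\C^\omega)\oplus H_1(D_{n_2};\C^\omega)$, i.e.\ of dimension at most $n_1+n_2-2$, contradicting the statement (and the dimension count $\dim H_1(D_n;\C^\omega)=n-1$); your own mid-sentence ``wait'' reflects this inconsistency, after which the desired splitting is asserted rather than derived. The correct mechanism, which is the one in the paper, is that since the gluing region is contractible the sequence reads
\begin{equation*}
0 \to H_1(D_{n_1};\C^{\psi_{c_1},\omega})\oplus H_1(D_{n_2};\C^{\psi_{c_2},\omega}) \to H_1(D_{n_1+n_2};\C^{\psi_{c_1\sqcup c_2},\omega}) \xrightarrow{\;\partial_1\;} \C ,
\end{equation*}
and the extra summand is $A=\Imm(\partial_1)\subset\C$; the same sequence for the tangle exteriors (glued along a contractible square -- note that in a juxtaposition no strand meets the separating wall, so there are no punctures there at all) gives $B=\Imm(\partial_2)$ with $A\subset B$, and the map $i_{c_1\sqcup c_2}$ becomes $i_{c_1}\oplus i_{c_2}\oplus i_A$, from which $\ker(j_{\tau_1\sqcup\tau_2})=\ker(j_{\tau_1})\oplus\ker(j_{\tau_2})\oplus\Delta_{A\cap A'}$ follows.

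A second, smaller gap concerns where the hypothesis $\tau_i\in\Tangles^\omega_\mu$ enters and how orthogonality is obtained. It is not needed to compute the homology of the gluing region (see above); it is needed, via Lemma \ref{nondeg}, to know that $\lambda_{c_1,\omega}(D_{n_1})$ and $\lambda_{c_2,\omega}(D_{n_2})$ are non-degenerate, which is what lets one choose the complement $A$ orthogonally to the two $H_1$ summands. Your proposed justification (``the pieces are disjoint after retraction'') at best gives orthogonality of $H_1(D_{n_1};\C^\omega)$ against $H_1(D_{n_2};\C^\omega)$, but says nothing about making $A$ orthogonal to them, which is the point you yourself flag as unresolved. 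Fixing the Mayer--Vietoris computation and replacing the disjointness argument by the non-degeneracy argument closes both gaps.
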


\begin{proof}
The space $D_{n_1 + n_2}$ can be obtained by gluing $D_{n_1}$ and $D_{n_2}$ along an interval in their boundary, which is contractible. Similarly, $X_{\tau_1 \sqcup \tau_2}$ can be obtained by gluing $X_{\tau_1}$ and $X_{\tau_2}$ along a square in their boundary, which is contractible as well. For $n=n_1,n_2$, since $D_n$ is homotopically equivalent to $\bigvee_n S^1$ a quick Mayer-Vietoris argument, together with the calculation of the twisted homology groups of $S^1$ similarly to the proof of Lemma \ref{nondeg}, shows that the $0$-th twisted homology module of $D_n$ is trivial. 

Then the Mayer-Vietoris sequence gives 

\[
\begin{tikzcd}[column sep=small]
0 \arrow[r] & H_1(D_{n_1};\C^{\psi_{c_1}, \omega}) \oplus H_1(D_{ n_2};\C^{\psi_{c_2}, \omega}) \arrow[r, "f"] \arrow[d, "i_{c_1} \oplus i_{c_2}"] &  H_1(D_{n_1 + n_2};\C^{\psi_{c_1\sqcup c_2}, \omega}) \arrow[r, "\partial_1"] \arrow[d, "i_{c_1 \sqcup c_2}"] & \C \arrow[d, equal] \arrow[r] & 0\\
0 \arrow[r] & H_1(X_{\tau_1};\C^\omega) \oplus H_1(X_{ \tau_2};\C^\omega) \arrow[r]  &  H_1(X_{\tau_1 \sqcup \tau_2};\C^\omega) \arrow[r, "\partial_2"] & \C &
\end{tikzcd}
\]
Hence $\partial_2$ is surjective as well. Notice that $f$ actually preserves the intersection forms.

We can split these exact sequences of vector spaces and obtain
$$ H_1(D_{n_1 + n_2};\C^{\psi_{c_1\sqcup c_2}, \omega}) \simeq  H_1(D_{n_1};\C^{\psi_{c_1}, \omega}) \oplus H_1(D_{ n_2};\C^{\psi_{ c_2}, \omega}) \oplus \C $$
and 

$$H_1(X_{\tau_1 \sqcup \tau_2};\C^\omega) \simeq  H_1(X_{\tau_1};\C^\omega) \oplus H_1(X_{ \tau_2};\C^\omega) \oplus \C.$$
With these identifications the map $i_{c_1 \sqcup c_2}$ becomes $i_{c_1} \oplus i_{c_2} \oplus \id_\C$.
Then $\F_\omega(\tau_1 \sqcup \tau_2)=\ker(j_{\tau_1 \sqcup \tau_2})= \ker(j_{\tau_1 }) \oplus \ker(j_{\tau_2 }) \oplus \Delta_{\C}$ as claimed.

We still need to check that the first decomposition is orthogonal. Since both $\lambda_{c_1,\omega}(D_{n_1})$ and $\lambda_{c_2,\omega}(D_{n_2})$ are non-degenerate because of the previous lemma, then we can choose the decomposition to be orthogonal. 

\end{proof}

\subsubsection{A reduction}

Let $A$ and $B$ be two maps which associate to each tangle an integer. We will write
$$ A \simeq B $$
if $|A-B|$ is bounded by a constant which is independent of the tangles.

We shall show that in order to prove Theorem \ref{main} we only need to prove the looser statement

$$\sigma_{\omega}( \widehat{\tau_1 \tau _2}) - \sigma_{\omega}( \widehat{\tau}_1) -\sigma_{\omega}( \widehat{\tau }_2) \simeq \Maslov(\F_{\omega}(\overline{\tau}_1), \Delta, \F_\omega(\tau_2)).$$

This reduction will allow us to not keep track of most of the Novikov-Wall defects.

To do this, however, we first need to prove one preliminary result.
We will indicate for simplicity by $ M_\omega(\tau_1, \tau_2)$ the right-hand side of the equation.

Recall that the left-hand side of this equation is additive with respect to the disjoint union of tangles. 

Let $\tau_1$ and $\tau_2$ be $(c,c)$-tangles and $\tau_1', \tau_2'$ be $(c',c')$-tangles. Let $\omega \in T^\mu_*$ be such that both $c$ and $c'$ belong to $\Tangles_\mu^\omega$. Then Proposition \ref{ort}, with $c_1=c_1'=c$ and $c_2=c_2'=c' $, and the invariance under unitary isomorphisms of the Maslov index 
imply
$$M_\omega(\tau_1 \sqcup \tau'_1, \tau_2 \sqcup \tau'_2)= \Maslov(\F_\omega(\overline{\tau}_1) \oplus \F_\omega(\overline{\tau'}_1) \oplus \Delta_{\C}, \Delta, \F_\omega(\tau_2) \oplus \F_\omega(\tau_2') \oplus \Delta_{\C}). $$
Notice that in this case the diagonal $\Delta$ is equal to $\Delta_{H_1(D_n; \C^{\psi_c,\omega})} \oplus \Delta_{H_1(D_{n'}; \C^{\psi_{c^\prime},\omega})} \oplus \Delta_\C. $ 

The additivity property of the Maslov index in Remark \ref{propertymaslov}
immediately imply that
$$M_\omega(\tau_1 \sqcup \tau'_1, \tau_2 \sqcup \tau'_2) = M_\omega(\tau_1, \tau_2) + M_\omega(\tau'_1, \tau'_2)+ \Maslov(\Delta_\C,\Delta_\C,\Delta_\C),$$
but $\Maslov(\Delta_\C,\Delta_\C,\Delta_\C) = 0. $

However, even if $\tau_1, \tau_2$ are both morphisms in $T_\mu^\omega(c,c)$, $\tau_1 \sqcup \tau_2$ does not need to be a morphism in $\Tangles_\mu^\omega$, and this explains the technicalities in the proof of the following lemma.

\begin{lem}
\label{add}
Let $\omega \in T^\mu_*$ and let $c$ be an object in $\Tangles_\mu^\omega$. For any $\tau_1, \tau_2$ $(c,c)$-tangles there exists $c'$ in $\Tangles^\omega_\mu$ and two $(c',c')$-tangles $\tau'_1$ and $\tau'_2$ such that
$$ M_\omega(\tau'_1, \tau'_2)= 2 M_\omega(\tau_1,\tau_2).$$
\end{lem}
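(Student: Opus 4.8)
The idea is to build $c'$ so that it contains a copy of $c$ together with a "mirrored" copy of $c$, chosen precisely so that the obstruction integer $I_\omega(\cdot)$ for the juxtaposition becomes nontrivial again. Concretely, given $c:\{1,\ldots,n\}\to\{\pm1,\ldots,\pm\mu\}$ let $\overline c$ denote the object obtained by reversing all the signs of $c$ (and reversing the order of the strands, to match the geometry of a reflected tangle). Then $l(\overline c)=-l(c)$, so for the juxtaposition $c\sqcup\overline c$ one gets $I_\omega(c\sqcup\overline c)=I_\omega(c)\cdot I_\omega(\overline c)=I_\omega(c)\cdot I_\omega(c)^{-1}=1$, which is exactly the bad case. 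To avoid this I would instead take $c'=c\sqcup c$ whenever $I_\omega(c)^2\neq1$, and otherwise perturb: append to $c\sqcup c$ a short block of strands (an extra object $d$ with $l(d)$ chosen so that $I_\omega(d)\neq I_\omega(c)^{-2}$ and $I_\omega(d)\neq1$; since $\omega\in T^\mu_*$ has all coordinates $\neq1$, a single extra strand of some colour $j$ with the right sign already changes the product by a factor $\omega_j^{\pm1}\neq1$, and one can always pick the sign so that the resulting product is $\neq1$). The strands of $d$ are connected to each other by a trivial tangle (the identity braid on $d$), so that attaching $d$ does not affect the closures of $\tau_1,\tau_2$ up to adding split unknots, whose contribution to $\sigma_\omega$ and to $M_\omega$ is a bounded (indeed zero) correction — but since we only need equality of $M_\omega$, not of $\sigma_\omega$, it is cleanest to arrange that the $d$-block contributes nothing to $M_\omega$ at all.

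With $c'$ so chosen, set $\tau_1'=\tau_1\sqcup\tau_1\sqcup\id_d$ and $\tau_2'=\tau_2\sqcup\tau_2\sqcup\id_d$, which are genuine $(c',c')$-tangles and, by construction, morphisms in $\Tangles_\mu^\omega$. Now I apply the computation carried out in the paragraph preceding the lemma: using Proposition~\ref{ort} (with the two factors being $c\sqcup c$ and $d$, or iterating it) together with the additivity of the Maslov index in Remark~\ref{propertymaslov} and its invariance under unitary isomorphisms, one gets
\[
M_\omega(\tau_1',\tau_2') = M_\omega(\tau_1\sqcup\tau_1,\ \tau_2\sqcup\tau_2) + M_\omega(\id_d,\id_d) + \Maslov(\Delta_{A},\Delta_{A},\Delta_{A}),
\]
and the last two terms vanish — the Maslov index of three equal subspaces is $0$, and $\F_\omega(\id_d)=\Delta$ so $M_\omega(\id_d,\id_d)=\Maslov(\Delta,\Delta,\Delta)=0$. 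Applying the same argument once more to the inner juxtaposition $c\sqcup c$ (now legitimately, since both factors equal $c\in\Tangles_\mu^\omega$ and we are free to take the orthogonal splitting of Proposition~\ref{ort} because $\lambda_{c,\omega}(D_n)$ is non-degenerate by Lemma~\ref{nondeg}) yields $M_\omega(\tau_1\sqcup\tau_1,\tau_2\sqcup\tau_2)=2M_\omega(\tau_1,\tau_2)+\Maslov(\Delta_{A'},\Delta_{A'},\Delta_{A'})=2M_\omega(\tau_1,\tau_2)$. Combining gives $M_\omega(\tau_1',\tau_2')=2M_\omega(\tau_1,\tau_2)$, as required.

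The step I expect to be the main obstacle is the bookkeeping in the first paragraph: one must verify that a suitable "padding" object $d$ and trivial filler tangle always exist with $I_\omega(c'\text{-intermediate objects})\neq1$ at every stage where Proposition~\ref{ort} is invoked — in particular that \emph{both} $c\sqcup c$ (or $c'$) and the pieces one splits off lie in $\Tangles_\mu^\omega$, since Proposition~\ref{ort} and the non-degeneracy it relies on require this. Because each coordinate $\omega_j\neq1$, adjoining or deleting a single coloured strand multiplies $I_\omega$ by a nontrivial factor, so finitely many sign choices suffice; making this fully precise (and checking the trivial filler contributes a vanishing Maslov term via $\F_\omega(\id)=\Delta$) is the only real content, the rest being a direct reprise of the displayed computation already given before the lemma.
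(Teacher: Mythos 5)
There is a genuine gap, and it sits exactly where you flagged it. In the only case where your padding $d$ is needed at all, namely $I_\omega(c)=-1$ (recall $I_\omega(c)\neq 1$ is automatic since $\tau_1,\tau_2\in T^\omega_\mu(c,c)$), the block $c\sqcup c$ has $I_\omega(c\sqcup c)=I_\omega(c)^2=1$, so $c\sqcup c\notin \Tangles^\omega_\mu$ and the form $\lambda_{c\sqcup c,\omega}(D_{2n})$ need not be non-degenerate. Consequently your displayed identity
$$M_\omega(\tau_1',\tau_2') = M_\omega(\tau_1\sqcup\tau_1,\ \tau_2\sqcup\tau_2) + M_\omega(\id_d,\id_d) + \Maslov(\Delta_{A},\Delta_{A},\Delta_{A}),$$
which splits off $d$ from $c\sqcup c$, is not licensed by Proposition \ref{ort}: that proposition (and the computation preceding the lemma) requires \emph{both} juxtaposition factors to lie in $\Tangles^\omega_\mu$, and here the factor $c\sqcup c$ never does. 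One can repair the order of splitting by peeling off one copy of $\tau_i$ instead, i.e.\ using the factors $c$ and $c\sqcup d$; this works precisely when the colour $j$ of the single padding strand satisfies $\omega_j\neq -1$ (so that $I_\omega(c\sqcup d)=-\omega_j^{\pm1}\neq1$), which recovers the paper's second case. But when $\omega=(-1,\ldots,-1)$ no such choice exists: every block then has $I_\omega=(-1)^{\#\text{strands}}$, so in any binary splitting of $c\sqcup c\sqcup d$ that isolates a copy of $\tau_i$ (or isolates $d$), one of the two blocks has an even strand count and hence $I_\omega=1$. So "finitely many sign choices" do not suffice, with your Ansatz or any simple variant of it.

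The missing idea is the device the paper uses for this residual case: exploit the stabilization identity $M_\omega(X\sqcup l,\ Y\sqcup l)=M_\omega(X,Y)$ (valid when the colouring of $X,Y$ and that of $l$ both lie in $\Tangles^\omega_\mu$, since $M_\omega(l,l)=0$) in \emph{both} directions, adding auxiliary trivial strands before splitting so that every splitting performed has both blocks with $I_\omega\neq1$. Concretely, with $l$ a single strand, the paper computes $M_\omega(\tau_1\sqcup\tau_1\sqcup l,\ \tau_2\sqcup\tau_2\sqcup l)=M_\omega(\tau_1\sqcup\tau_1\sqcup l\sqcup l,\ \tau_2\sqcup\tau_2\sqcup l\sqcup l)=M_\omega(\tau_1,\tau_2)+M_\omega(\tau_1\sqcup l\sqcup l,\ \tau_2\sqcup l\sqcup l)$, and iterates once more to reduce the second summand to $M_\omega(\tau_1,\tau_2)+M_\omega(l\sqcup l\sqcup l,\ l\sqcup l\sqcup l)=M_\omega(\tau_1,\tau_2)$; at every step both blocks have odd strand count, hence $I_\omega=-1\neq1$. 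Your first case ($I_\omega(c)\neq-1$, take $\tau_i'=\tau_i\sqcup\tau_i$) agrees with the paper and is fine; the remaining case needs this extra argument, which your proposal acknowledges as the main obstacle but does not supply and, as written, cannot be completed by sign bookkeeping alone.
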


\begin{proof}
If $I_c(\omega) \neq -1$ then $\tau'_1= \tau_1 \sqcup \tau_1$ and $\tau'_2= \tau_2 \sqcup \tau_2$ both work, that is to say $I_{c\sqcup c}(\omega) \neq 1$ and $M_\omega(\tau'_1,\tau'_2)= 2 M_\omega(\tau_1,\tau_2)$ as shown above.

Otherwise we consider 
$$ \tau'_1=\tau_1 \sqcup \tau_1 \sqcup l $$
and 
$$ \tau'_2=\tau_2 \sqcup \tau_2 \sqcup l $$
where $l$ is a trivial vertical upward oriented arc which we colour with the first $j\in \{1,\ldots,\mu\}$ such that $\omega_j \neq -1$ if $\omega \neq (-1,\ldots, -1)$ and we colour it with $j=1$ otherwise. If $c'$ denotes the corresponding colouring, we see that $I_{c'}(\omega)= \omega_j \neq 1.$ 

If we can choose $\omega_j \neq -1$, thanks to Proposition \ref{ort} we see that 

$$M_\omega(\tau'_1, \tau'_2) = M_\omega(\tau_1 \sqcup \tau_1 \sqcup l, \tau_2  \sqcup \tau_2 \sqcup l) $$

factors as $$M_\omega(\tau_1, \tau_2)+ M_\omega(\tau_1 \sqcup l, \tau_2 \sqcup l)= 2M_\omega(\tau_1, \tau_2) + M_\omega(l,l) $$

but $M_\omega(l,l) $ vanishes thanks to the properties of the Maslov index in Remark \ref{propertymaslov}.
This is true since $\tau_i, \tau_i \sqcup l$ and $l$ are all morphisms in $\Tangles^\omega_\mu$ for $i=1,2$.

If $\omega= (-1,\ldots,-1)$ we need to be more careful.
Notice that
$$ M_\omega(\tau'_1, \tau'_2) = M_\omega(\tau_1 \sqcup \tau_1 \sqcup l \sqcup l, \tau_2  \sqcup \tau_2 \sqcup l \sqcup l)$$
thanks to Remark \ref{propertymaslov} and Proposition \ref{ort}. Here we are using that $\tau_1 \sqcup \tau_1 \sqcup l$ and $l$ are both morphisms in $\Tangles^\omega_\mu$. 

Therefore this factors as 
$$M_\omega(\tau_1, \tau_2)+  M_\omega(\tau_1 \sqcup l \sqcup l, \tau_2  \sqcup l \sqcup l)$$

and the second term is equal to 

$$M_\omega(\tau_1 \sqcup l \sqcup l \sqcup l, \tau_2  \sqcup l \sqcup l \sqcup l)= M_\omega(\tau_1, \tau_2) + M_\omega( l \sqcup l \sqcup l,  l \sqcup l \sqcup l) $$

but we know that $M_\omega( l \sqcup l \sqcup l,  l \sqcup l \sqcup l)=0 $, hence we get that also in this case 

$$M_\omega(\tau'_1, \tau'_2)= 2 M_\omega(\tau_1,\tau_2). $$

\end{proof}

We are now ready to prove the reduction:

\begin{prop}
\label{reduction}
To prove Theorem \ref{main} it is enough to show that for any $c$ in $\Tangles_\mu$ we have
$$\sigma_{\omega}( \widehat{\tau_1 \tau _2}) - \sigma_{\omega}( \widehat{\tau}_1) -\sigma_{\omega}( \widehat{\tau}_2) \simeq \Maslov(\F_{\omega}(\overline{\tau}_1), \Delta, \F_\omega(\tau_2))$$
for each $\omega$ such that $I_c(\omega) \neq 1$.
\end{prop}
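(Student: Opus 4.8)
<br>

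The goal is to prove Proposition \ref{reduction}, which says that proving the "loose" version (with $\simeq$, i.e. equality up to a bounded error) of Theorem \ref{main} suffices to prove the exact equality. The plan is to exploit the multiplicativity/additivity built into both sides of the equation and the fact that an integer-valued quantity that is bounded and scales by a factor $2$ under a doubling operation must vanish.

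First I would set up the two maps $A(\tau_1,\tau_2) = \sigma_\omega(\widehat{\tau_1\tau_2}) - \sigma_\omega(\widehat\tau_1) - \sigma_\omega(\widehat\tau_2)$ and $B(\tau_1,\tau_2) = M_\omega(\tau_1,\tau_2) = \Maslov(\F_\omega(\overline\tau_1),\Delta,\F_\omega(\tau_2))$, so that the loose statement reads $A \simeq B$, i.e. $|A - B| \le K$ for some constant $K$ independent of the tangles (but possibly depending on $c$ and $\omega$). Then I would observe that $A$ is additive under disjoint union of pairs of tangles --- this is because $\widehat{(\tau_1\sqcup\tau_1')(\tau_2\sqcup\tau_2')}= \widehat{\tau_1\tau_2}\sqcup\widehat{\tau_1'\tau_2'}$ as coloured links and the multivariate signature is additive under disjoint union (split link). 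The key input for $B$ is Lemma \ref{add}: for any $\tau_1,\tau_2\in T^\omega_\mu(c,c)$ there exist $c'\in\Tangles^\omega_\mu$ and $(c',c')$-tangles $\tau_1',\tau_2'$ with $M_\omega(\tau_1',\tau_2') = 2 M_\omega(\tau_1,\tau_2)$; moreover, by the construction in that lemma, $\tau_i'$ is $\tau_i\sqcup\tau_i$ (possibly with one or two trivial arcs $l$ adjoined), so by additivity of $A$ we also get $A(\tau_1',\tau_2') = 2A(\tau_1,\tau_2) + (\text{contribution of the }l\text{-arcs})$, where the $l$-contribution is itself a fixed bounded quantity (indeed the closure of a product of trivial arcs is an unlink, whose signature is $0$, so that contribution is exactly $0$).

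Now suppose the loose statement holds for all objects of $\Tangles^\omega_\mu$, so in particular $|A(\tau_1',\tau_2') - B(\tau_1',\tau_2')|\le K$ for the object $c'$ as well. Combining $A(\tau_1',\tau_2') = 2A(\tau_1,\tau_2)$ (the $l$-terms contributing $0$ on the signature side) and $B(\tau_1',\tau_2') = 2B(\tau_1,\tau_2)$, we obtain $|2A(\tau_1,\tau_2) - 2B(\tau_1,\tau_2)|\le K$, hence $|A(\tau_1,\tau_2) - B(\tau_1,\tau_2)| \le K/2$. Iterating: apply the doubling $m$ times (the lemma can be re-applied to $\tau_1',\tau_2'$ since they are again morphisms in $\Tangles^\omega_\mu$), getting $|A(\tau_1,\tau_2) - B(\tau_1,\tau_2)|\le K/2^m$ for every $m\ge 0$. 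Since $A - B$ does not depend on $m$, letting $m\to\infty$ forces $A(\tau_1,\tau_2) = B(\tau_1,\tau_2)$, which is exactly the exact equality of Theorem \ref{main} for tangles that are morphisms in $\Tangles^\omega_\mu$; since Theorem \ref{main} only concerns such $\omega$ (those with $I_\omega(c)\neq 1$), this finishes the reduction.

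The main point to be careful about --- and the only genuine obstacle --- is bookkeeping the discrepancy between $A(\tau_1',\tau_2')$ and $2A(\tau_1,\tau_2)$, and similarly between $B(\tau_1',\tau_2')$ and $2B(\tau_1,\tau_2)$, when $\tau_i'$ is not literally $\tau_i\sqcup\tau_i$ but carries extra trivial arcs. On the Maslov side this is precisely Lemma \ref{add} (the extra Maslov contributions of $l$, $l\sqcup l$, $l\sqcup l\sqcup l$ all vanish by Remark \ref{propertymaslov} and Proposition \ref{ort}). On the signature side I would note that adjoining a trivial vertical arc to a $(c,c)$-tangle adds a disjoint unknotted, unlinked circle to every relevant closure, and adjoining it to a product does the same, so by additivity of $\sigma_\omega$ under disjoint union and $\sigma_\omega(\text{unknot})=0$ these extra terms contribute $0$ to $A$; thus $A(\tau_1',\tau_2')=2A(\tau_1,\tau_2)$ exactly, matching $B(\tau_1',\tau_2')=2B(\tau_1,\tau_2)$, and the telescoping argument above goes through cleanly.
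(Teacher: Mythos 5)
Your proposal is correct and follows essentially the same route as the paper: both use Lemma \ref{add} (applied iteratively, staying inside $\Tangles^\omega_\mu$) to double the Maslov side, additivity of the multivariate signature under juxtaposition (with the trivial arcs contributing zero) to double the signature side, and the fact that a quantity that is uniformly bounded yet scales by $2^m$ must vanish. The only cosmetic difference is that you divide the bound by $2^m$ and let $m\to\infty$, whereas the paper argues by contradiction, letting $2^m N_\omega(\tau_1,\tau_2)$ grow unboundedly.
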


\begin{proof}
Assume by contradiction that for a fixed object $c$ in $\Tangles_\mu$ there are two $(c,c)$-tangles $\tau_1,\tau_2$ and an $\omega$ with $I_c(\omega) \neq 1$ such that

$$N_\omega(\tau_1,\tau_2):= \sigma_{\omega}( \widehat{\tau_1 \tau _2}) - \sigma_{\omega}( \widehat{\tau}_1) -\sigma_{\omega}( \widehat{\tau} _2) - \Maslov(\F_{\omega}(\overline{\tau}_1), \Delta, \F_\omega(\tau_2)) $$
does not vanish. Then for any $m \in \N$ we can use inductively Lemma \ref{add} to obtain a colouring $c(m)$ and tangles $\tau_1(m)$ and $\tau_2(m)$ such that $I_{c(m)}(\omega) \neq 1$ and 

$$M_\omega(\tau_1(m),\tau_2(m))=2^m M_\omega(\tau_1,\tau_2). $$

Moreover, as a consequence of the proof of Lemma \ref{add}, we can assume that $\tau_i(m)$ is the juxtaposition of $m$ copies of $\tau_i$ and of some trivial vertical arcs. Therefore, the additivity by juxtaposition of the Levine-Tristram signature implies 
$$\sigma_{\omega}( \widehat{\tau_1(m) \tau _2(m)}) - \sigma_{\omega}( \widehat{\tau}_1(m)) -\sigma_{\omega}( \widehat{\tau} _2(m))= 2^m (\sigma_{\omega}( \widehat{\tau_1 \tau _2}) - \sigma_{\omega}( \widehat{\tau}_1) -\sigma_{\omega}( \widehat{\tau} _2)). $$
Hence
$$ N_\omega(\tau_1(m),\tau_2(m))=2^m N_\omega(\tau_1,\tau_2). $$ 

Since the right-hand side goes to infinity as $m$ grows, this concludes the proof.

\end{proof}

\subsubsection{The manifold $P(\tau_1, \tau_2)$.}

We now construct a $4$-manifold which has twisted signature equal to $M_\omega(\tau_1, \tau_2) $ up to a uniformly bounded constant.

Fix a map $c:\{1,\ldots,n\} \to \{ \pm 1, \ldots, \pm \mu\}$ and an element $\omega\in T^\mu_*$ such that $I_c(\omega) \neq 1$. 

Let $P$ be the pair of pants with a fixed orientation as in the figure below. Let $I_1$ and $I_2$ be closed intervals joining the inner boundary components of the pair of pants to the outer boundary component. Thicken these intervals in order to get $J_1=[0,1]\times I_1$ and $J_2= [0,1] \times I_2$ as shown in Figure \ref{pantalone}.

\begin{figure}[H]
    \centering
    \includegraphics[width= 6 cm]{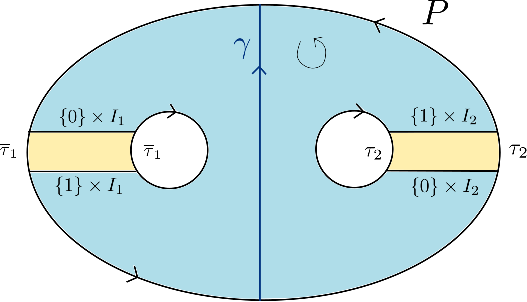}
    \caption{The pair of pants $P$. Notice that the tangle $\tau_1$ is reflected.}
    \label{pantalone}
\end{figure}

Let $\tau_1,\tau_2$ be $(c,c)$-tangles. Consider the surface $T(\tau_1,\tau_2)$ in $ D^2 \times P$ defined by the following:

\begin{itemize}
    \item $T(\tau_1,\tau_2) \cap (D^2 \times (P \setminus (J_1 \cup J_2))) $ is the surface $ \{p_1^{(n)},\ldots, p_n^{(n)}\} \times (P \setminus (J_1 \cup J_2)) $;
    \item $T(\tau_1,\tau_2) \cap (D^2\times J_1)=T(\tau_1,\tau_2) \cap (D^2 \times [0,1] \times I_1) $ is the surface $\tau_1 \times I_1$;
    \item $T(\tau_1,\tau_2) \cap (D^2\times J_2)=T(\tau_1,\tau_2) \cap (D^2 \times [0,1] \times I_2) $ is the surface $\tau_2 \times I_2$.
\end{itemize}

Let $R(\tau_1, \tau_2)$ indicate an open tubular neighbourhood of this surface.

We define the compact, oriented $4$-manifold

$$ P(\tau_1, \tau_2):= (D^2 \times P) \setminus R(\tau_1, \tau_2). $$


\begin{lem}
\label{P}
The first homology group of $ P(\tau_1, \tau_2)$ can be identified with $\Z^k \oplus \Z^2$, where $k$ is the number of connected components of $T(\tau_1,\tau_2)$.

There exists a homomorphism $$H_1(P(\tau_1, \tau_2)) \to \Z^\mu$$ which vanishes on the summand $\Z^2$ and whose composition with the homomorphism induced by the inclusion of any copy of $X_{\tau_i}$ into $P(\tau_1,\tau_2)$ coincides with the natural map $H_1(X_{\tau_i}) \to \Z^\mu$.
\end{lem}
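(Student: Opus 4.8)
The plan is to read off $H_1(P(\tau_1,\tau_2))$ from the long exact sequence of the pair $(D^2\times P,\,P(\tau_1,\tau_2))$, using excision and the Thom isomorphism, and then to build the map to $\Z^\mu$ by hand from the resulting presentation. I would first record the geometric input. The space $D^2\times P$ deformation retracts onto $\{0\}\times P$, so $H_1(D^2\times P)\cong\Z^2$ and $H_i(D^2\times P)=0$ for $i\ge 2$; the manifold $P(\tau_1,\tau_2)$ is connected, being the complement of an open neighbourhood of the codimension-two submanifold $T:=T(\tau_1,\tau_2)$; and the normal bundle $\nu T$ of $T$ in $D^2\times P$ is trivial. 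For this last point, each of the three pieces $\{p_j\}\times(P\setminus(J_1\cup J_2))$, $\tau_1\times I_1$, $\tau_2\times I_2$ carries an obvious normal framing — the constant framing of $D^2$ at $p_j$ on the first, the product of a normal framing of the tangle with $I_i$ on the others — these pieces meet pairwise along disjoint unions of arcs, and near those arcs all the framings are standard; hence they patch to a global trivialization. In particular the frontier of $R(\tau_1,\tau_2)$ in $D^2\times P$ is the trivial $D^2$-bundle $\overline{R(\tau_1,\tau_2)}\to T$ with sphere bundle $S^1\times T$.

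By excision (after shrinking the tube) $H_*(D^2\times P,\,P(\tau_1,\tau_2))\cong H_*(\overline{R(\tau_1,\tau_2)},\,S^1\times T)$, and the Thom isomorphism for this trivial rank-two disk bundle gives $H_k(D^2\times P,\,P(\tau_1,\tau_2))\cong H_{k-2}(T)$, which is $\Z^k$ for $k=2$ (with $k$ the number of components of $T$) and $0$ for $k=1$. Feeding this into the long exact sequence of the pair and using $H_2(D^2\times P)=0$, $H_1(D^2\times P)=\Z^2$ yields
\begin{equation*}
0\longrightarrow\Z^k\longrightarrow H_1(P(\tau_1,\tau_2))\longrightarrow\Z^2\longrightarrow 0 ,
\end{equation*}
in which the generator corresponding to the $j$-th component of $T$ is sent by the connecting homomorphism to the boundary of a normal fibre disk, i.e. to the meridian $\bar m_j\in H_1(P(\tau_1,\tau_2))$ of that component. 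Hence the $\bar m_j$ are linearly independent and span the kernel, and since $\Z^2$ is free the sequence splits, giving $H_1(P(\tau_1,\tau_2))\cong\Z^k\oplus\Z^2$. A concrete complement realising the $\Z^2$ is the image of $H_1(\{z_0\}\times P)$ for a fixed point $z_0\in\partial D^2$: the copy $\{z_0\}\times P$ is disjoint from $T$ (which lies in $\operatorname{int}(D^2)\times P$), so it lies in $P(\tau_1,\tau_2)$, and $\{z_0\}\times P\hookrightarrow D^2\times P$ is a homotopy equivalence, so $H_1(\{z_0\}\times P)$ maps isomorphically onto a complement of $\langle\bar m_1,\dots,\bar m_k\rangle$ in $H_1(P(\tau_1,\tau_2))$.

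For the homomorphism I would define $\rho\colon H_1(P(\tau_1,\tau_2))\to\Z^\mu$ by sending each $\bar m_j$ to the standard generator of $\Z^\mu$ indexed by the colour of the $j$-th component of $T$, and sending the image of $H_1(\{z_0\}\times P)$ to $0$; by the splitting above this is well defined, and by construction it vanishes on the $\Z^2$-summand. To verify compatibility, fix a copy of $X_{\tau_i}$ inside $P(\tau_1,\tau_2)$ and a component $K$ of $\tau_i$. Then $K\times I_i\subset T$ is connected, hence lies in a single component of $T$, whose colour equals that of $K$; therefore the inclusion $X_{\tau_i}\hookrightarrow P(\tau_1,\tau_2)$ carries the meridian of $K$ to the corresponding $\bar m_j$, and $\rho$ sends it to the generator of $\Z^\mu$ indexed by the colour of $K$ — which is exactly the image of the meridian of $K$ under the natural map $H_1(X_{\tau_i})\to\Z^\mu$. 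As $H_1(X_{\tau_i})$ is generated by the meridians of the components of $\tau_i$, the two composites agree. I expect the part of the argument requiring real care to be the geometric setup — checking that $\nu T$ is trivial and that the excision/Thom step applies as claimed — since once the displayed short exact sequence is in hand the rest is formal, relying only on the fact that inclusions of tangle exteriors send meridians to meridians of the same colour.
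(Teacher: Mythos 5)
Your argument is correct, and it reaches the same presentation of $H_1(P(\tau_1,\tau_2))$ as the paper, but by a different homological route. The paper disposes of the computation in one line by applying Mayer--Vietoris to the decomposition $P(\tau_1,\tau_2)\cup R(\tau_1,\tau_2)=D^2\times P$ and reading off that $H_1$ is freely generated by the meridians $\gamma_i$ together with a $\Z^2$ coming from the holes of $P$; you instead use the long exact sequence of the pair $(D^2\times P, P(\tau_1,\tau_2))$ together with excision and the Thom isomorphism for the normal disk bundle of $T(\tau_1,\tau_2)$, which yields the split short exact sequence $0\to\Z^k\to H_1(P(\tau_1,\tau_2))\to\Z^2\to 0$ with the kernel visibly generated by meridians (images of fibre disks under the connecting map) and with an explicit splitting given by $H_1(\{z_0\}\times P)$, $z_0\in\partial D^2$. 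What your route buys is precisely the content the paper's ``easily gives'' leaves implicit: why the meridians are linearly independent, why there is no torsion, and what the complementary $\Z^2$ summand is; the cost is that you must control the normal bundle of $T(\tau_1,\tau_2)$, an input the Mayer--Vietoris argument also needs implicitly (to identify the homology of the circle bundle $\partial\overline{R}$) but never isolates. The remaining verification --- that the colouring of the tangles descends to a well-defined colouring of the components of $T(\tau_1,\tau_2)$ and that the inclusion of a slice $X_{\tau_i}$ carries the meridian of a strand to the meridian of the surface component of the same colour --- is exactly the paper's argument. One step of yours deserves a word more of care: the patching of the three ``obvious'' normal framings is not automatic, since a framing of the normal bundle of a strand of $\tau_i$ need not restrict to the constant $D^2$-framing at its endpoints; either adjust the strand framings (possible, as $SO(2)$ is connected and the strands meet $D^2\times\{0,1\}$ orthogonally) so that they are standard near the gluing arcs, or bypass framings altogether by noting that $T(\tau_1,\tau_2)$ is orientable (its horizontal and vertical pieces can be coherently oriented because $\tau_1,\tau_2$ are $(c,c)$-tangles, so the boundary signs at the top and bottom of each strand agree), hence its normal bundle in the orientable manifold $D^2\times P$ is an orientable rank-two bundle over a surface with nonempty boundary and is therefore trivial; orientability alone already suffices for the Thom isomorphism with $\Z$ coefficients.
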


\begin{proof}
The Mayer-Vietoris sequence applied to the pair $P(\tau_1, \tau_2)\cup R(\tau_1, \tau_2)= D^2 \times P$ easily gives us that 
$$H_1(P(\tau_1, \tau_2)) = \left(\bigoplus_i \Z \gamma_i \right)\oplus \Z^2$$
where $\gamma_i$ indicates the oriented meridian of a connected component of the surface $T(\tau_1, \tau_2)$ and the summand $\Z^2$ is given by the holes in $D^2 \times P$. Notice that the colouring of the tangles extends to a well-defined colouring of the surface $ T(\tau_1, \tau_2)$.

We can then define a homomorphism

\begin{equation*}
    H_1(P(\tau_1,\tau_2)) \to \Z^\mu \simeq \langle t_1,\ldots, t_\mu \rangle
\end{equation*}
    which assigns to the oriented meridian $\gamma_i$ associated to a $j$-coloured component of the surface the element $t_j$ of $\Z^\mu$ and sends the summand $\Z^2$ to $0$.
    
    The homomorphism induced by the inclusion $H_1 (X_{\tau_i}) \to H_1(P(\tau_1,\tau_2))$ can be easily seen to send a meridian of a $j$-coloured component of the tangle into a meridian of a connected component of the surface, which by definition must have the same colouring $j$. This concludes our proof.

\end{proof}

Let $\omega\in T^\mu_*$. 
Using the homomorphism of Lemma \ref{P} we can give $ \C$ a \newline $\Z[\pi_1(P(\tau_1,\tau_2))]$-module structure and talk about the twisted homology modules $$H_*(P(\tau_1,\tau_2); \C^\omega). $$

If we call $X_{\widehat{\tau}}$ the exterior of the closure of a tangle in $D^2 \times S^1$, notice that the boundary of $\overline{R(\tau_1,\tau_2) }$ consists of a part contained in the boundary of $D^2 \times P $, and of a part which is properly embedded in $D^2 \times P $, which we call $U(\tau_1,\tau_2)$. Notice that 
$$\partial P(\tau_1,\tau_2)= 
((-X_{\widehat{\tau}_1}) \sqcup (-X_{\widehat{\tau}_2}) \sqcup X_{\widehat{\tau_1 \tau_2}}) \cup \partial D^2 \times P \cup U(\tau_1,\tau_2) $$

The following result will be used throughout this section:

\begin{lem}
\label{zero}
For every $(c,c)$-tangle $\tau$, if $\Sigma$ denotes the surface $\partial X_{\widehat{\tau}} \setminus (\partial D^2 \times S^1) = \partial \mathcal{N}(\widehat{\tau})$, then $H_*(\Sigma; \C^\omega)=0.$ 
\end{lem}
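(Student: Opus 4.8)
The statement claims that the twisted homology of the ``tube'' boundary $\Sigma = \partial\mathcal N(\widehat\tau)$ around the closure $\widehat\tau$ vanishes with $\C^\omega$ coefficients. The manifold $\mathcal N(\widehat\tau)$ deformation retracts onto $\widehat\tau$, a disjoint union of circles, one for each connected component of the link $\widehat\tau$; hence its boundary $\Sigma$ is a disjoint union of tori $\Sigma = T^1 \sqcup \cdots \sqcup T^s$, one torus $T^i$ per component $K_i$ of $\widehat\tau$. The plan is therefore to compute $H_*(T^i;\C^\omega)$ for each torus separately, where the coefficient system on $T^i$ is the restriction of the one on $X_{\widehat\tau}$ obtained from the map $\pi_1(X_{\widehat\tau}) \to \Z^\mu \xrightarrow{\omega}\C^*$; concretely, for the component $K_i$ of colour $j$, the longitude of $T^i$ is nullhomologous in $X_{\widehat\tau}$ (it bounds a Seifert-type surface after capping, or more simply maps to $0$ in $H_1(X_{\widehat\tau})$ up to linking contributions which are accounted by the colours of the other components), and the meridian maps to $t_j$, hence to $\omega_j$.

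First I would make the coefficient computation precise: give $T^i = S^1_m \times S^1_\ell$ (meridian times longitude) the obvious CW structure with one $0$-cell, two $1$-cells $m,\ell$, and one $2$-cell. The twisted chain complex $\C^\omega \otimes_{\Z[\pi_1 T^i]} C_*(\widetilde{T^i})$ is the standard Koszul-type complex
\[
0 \to \C \xrightarrow{\begin{pmatrix} \ell-1 \\ -(m-1)\end{pmatrix}} \C^2 \xrightarrow{\begin{pmatrix} m-1 & \ell-1\end{pmatrix}} \C \to 0,
\]
where $m$ acts as multiplication by $\omega_j$ and $\ell$ acts as multiplication by some $\nu \in \C^*$. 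The key point is that $m-1 = \omega_j - 1 \neq 0$ because $\omega \in T^\mu_* = (S^1\setminus\{1\})^\mu$, so $\omega_j \neq 1$. With one of the two entries of each matrix invertible, an elementary linear-algebra check shows the complex is acyclic: $H_0 = H_1 = H_2 = 0$. This is the one place where the hypothesis $\omega_i \neq 1$ is genuinely used, and it is worth emphasising since it is exactly the reason the tori contribute nothing — contrast with Lemma \ref{nondeg}, where the extra condition $I_c(\omega)\neq 1$ was needed precisely because one of the boundary circles there did \emph{not} have its distinguished loop mapping to a power of a single $\omega_j$ that is automatically $\neq 1$.

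The main (and essentially only) obstacle is a bookkeeping one: verifying that the coefficient system restricted to each torus $T^i$ really does send the meridian $m$ to $\omega_{j}$ for the colour $j$ of the corresponding component — i.e. that the inclusion $\pi_1(T^i)\to\pi_1(X_{\widehat\tau})$ followed by $\pi_1(X_{\widehat\tau})\to\Z^\mu$ sends the meridian class to $t_j$. This is immediate from the definition of the map $H_1(X_{\widehat\tau})\to\Z^\mu$ (it is defined precisely by sending meridians of colour-$j$ components to $t_j$). The value $\nu$ that the longitude maps to is irrelevant for the argument, since acyclicity only needs $\omega_j \neq 1$. Finally, taking the direct sum over the $s$ tori gives $H_*(\Sigma;\C^\omega) = \bigoplus_{i=1}^s H_*(T^i;\C^\omega) = 0$, which is the claim.
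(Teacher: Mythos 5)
Your proof is correct and follows essentially the same route as the paper: decompose $\Sigma$ into the boundary tori of the components of $\widehat\tau$, give each torus the CW structure with one $0$-cell, two $1$-cells (meridian and longitude) and one $2$-cell, and observe that since the meridian acts by $\omega_j\neq 1$ the resulting twisted chain complex is acyclic, the value on the longitude being irrelevant. The paper's proof does exactly this computation (writing out $\partial\widetilde e_i$ and checking $\ker\psi=\operatorname{Im}\phi$ explicitly), so there is nothing to add.
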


\begin{proof}
Let $K_i$ for $1 \le i \le m$ be a connected component of $\widehat{\tau}$ such that $\widehat{\tau}= K_1 \sqcup \ldots \sqcup K_m$. The surface $\Sigma$ is the union of $m$ disjoint tori $T_i$ and each $T_i$ inherits a well-defined colouring. 

For each $i$
\begin{equation*}
     \pi_1(T_i) \to \pi_1(X_{\widehat{\tau}})  \to H_1( X_{\widehat{\tau}}) \to \Z^\mu\simeq \langle t_1 , \ldots t_\mu \rangle
\end{equation*}
sends the meridian $m_i$ to $t_i$ and the longitude $l_i$ to some $f(t_1,\ldots,t_\mu) \in \langle t_1^{\pm1},\ldots,t_\mu^{\pm1}\rangle$.

Decompose the torus $T_i$ as a CW complex with one $0$-cell, two $1$-cells which are $m_i$ and $l_i$ and one $2$-cell. The complex $\C^\omega \otimes_{\Z[\pi_1(T_i)]} C_*(\widetilde{T}_i)$ is of free $\Z[\pi_1(T_i)]$-modules generated by lifts of the cells:
$$ 0 \to \C \xrightarrow{\phi} \C^2 \xrightarrow{\psi} \C \to 0 $$

Let $\widetilde{e}_i$ denote a lift of the $2$-cell, $\widetilde{m_i}, \widetilde{l_i}$ lifts of the meridian and the longitude respectively, and $\widetilde{p}_i$ a lift of the $0$-cell. Indeed $$\partial \widetilde{e}_i= \widetilde{m_i}+(m_i \cdot \widetilde{l_i})-(l_i \cdot \widetilde{m_i}) -\widetilde{l_i} $$ hence $\phi(z)=\left(z(1-f(\omega_1,\ldots,\omega_\mu)),z \cdot (\omega_i-1)\right)$ is injective.
Furthermore,
$$ \partial \widetilde{m_i}= m_i \cdot \widetilde{p}_i -\widetilde{p}_i$$
and 
$$ \partial \widetilde{l_i}= l_i \cdot \widetilde{p}_i -\widetilde{p}_i,$$
therefore $\psi(z,w)= z \cdot (\omega_i-1) + w \cdot (f(\omega_1,\ldots,\omega_\mu)-1)$, and $\ker \psi = \Imm \phi$.

This implies $$H_*(T_i; \C^\omega) =0.$$
\end{proof}

We are now ready to prove:

\begin{prop}
\label{PMaslov}
For any $\omega$ and $\tau_1,\tau_2$ as above, we have
$$ \sign_\omega( P(\tau_1,\tau_2)) \simeq  \Maslov(\F_\omega(\overline{\tau}_1), \Delta, \F_\omega(\tau_2)).$$
\end{prop}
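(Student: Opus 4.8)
The proposition contains two claims: $\sign_\omega(P(\tau_1,\tau_2))\simeq\sign_\omega(\widetilde P(\tau_1,\tau_2))$ and $\sign_\omega(\widetilde P(\tau_1,\tau_2))=\Maslov(\F_\omega(\overline{\tau}_1),\Delta,\F_\omega(\tau_2))$. For the first I would apply the Novikov-Wall theorem to the splitting $\widetilde P(\tau_1,\tau_2)=P(\tau_1,\tau_2)\cup_{\partial D^2\times P}(D^2\times P)$ with separating hypersurface $X_0=\partial D^2\times P$. Since $D^2\times P$ is homotopy equivalent to the $1$-complex $P$, its twisted second homology vanishes and $\sign_\omega(D^2\times P)=0$; moreover the Novikov-Wall correction term is a Maslov index of three subspaces of $H_1(Z;\C^\omega)$ with $Z=\partial X_0=\partial D^2\times\partial P$ a disjoint union of three $2$-tori, hence bounded in absolute value by $\dim_\C H_1(Z;\C^\omega)\le 6$, uniformly in $\tau_1,\tau_2$. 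This gives $\bigl|\sign_\omega(\widetilde P(\tau_1,\tau_2))-\sign_\omega(P(\tau_1,\tau_2))\bigr|\le 6$, which is the asserted $\simeq$.

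For the second claim, write $P=P_0\cup J_1\cup J_2$ as in Figure \ref{pantalone}, where $P_0=P\setminus(J_1\cup J_2)$ is a disk. By the definition of $T(\tau_1,\tau_2)$ this induces a decomposition of $\widetilde P(\tau_1,\tau_2)$ into three pieces which, after capping off the $\partial D^2$-directions — an operation which changes neither the twisted first homology nor the twisted intersection form, because the corresponding monodromy equals $I_\omega(c)\neq 1$ — are
\[W_0\simeq D_n\times P_0,\qquad W_1\simeq X_{\overline{\tau}_1}\times I_1,\qquad W_2\simeq X_{\tau_2}\times I_2,\]
glued to one another along copies of $D_n\times I_i$ lying in the long sides of the bands $J_i$; the reflected tangle $\overline{\tau}_1$ appears because of the way $J_1$ is attached to the outer boundary circle in Figure \ref{pantalone}. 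The point of this decomposition is that each of the three pieces has identically vanishing twisted intersection form, hence twisted signature $0$: for $W_0$ because $P_0$ is contractible, so $W_0\simeq D_n$ is homotopy equivalent to a wedge of circles and $H_2(W_0;\C^\omega)=0$; and for $W_1,W_2$ because in a product $X_\tau\times I$ any two twisted $2$-cycles can be isotoped to disjoint levels $X_\tau\times\{t\}$ and therefore have intersection number zero.

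Next I would reassemble $\widetilde P(\tau_1,\tau_2)$ from $W_0,W_1,W_2$ by iterating the Novikov-Wall theorem. Every hypersurface along which two of these pieces are glued, once its toric boundary components are discarded — they contribute nothing, by the argument of Lemma \ref{zero}, since all the relevant monodromies ($\omega_i$ and $I_\omega(c)$) are distinct from $1$ — is a union of copies of $D_n\times I_i$, whose twisted first homology is $H_1(D_n;\C^{\psi_c,\omega})$; the latter carries a non-degenerate skew-Hermitian form by Lemma \ref{nondeg}, so the ambient skew-Hermitian space in which the Novikov-Wall Lagrangians live is $H=\bigl(-H_1(D_n;\C^{\psi_c,\omega})\bigr)\oplus H_1(D_n;\C^{\psi_c,\omega})$. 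Since all three pieces have zero twisted signature, $\sign_\omega(\widetilde P(\tau_1,\tau_2))$ equals the total Novikov-Wall correction, and a direct computation of the three Lagrangians as kernels of the inclusion-induced maps $H\to H_1(X_{\overline{\tau}_1};\C^\omega)$, $H\to H_1(W_0;\C^\omega)$ and $H\to H_1(X_{\tau_2};\C^\omega)$ identifies them with $\ker j_{\overline{\tau}_1}=\F_\omega(\overline{\tau}_1)$, with the diagonal $\Delta$ (both copies of $D_n$ in $\partial W_0$ being included into $W_0\simeq D_n$ in the same way), and with $\ker j_{\tau_2}=\F_\omega(\tau_2)$. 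A computation using the additivity and antisymmetry properties of the Maslov index (Remark \ref{propertymaslov}) then shows that the total correction equals the single term $\Maslov(\F_\omega(\overline{\tau}_1),\Delta,\F_\omega(\tau_2))$, completing the proof.

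The hard part will be the bookkeeping in this second step: making precise the decomposition of $\widetilde P(\tau_1,\tau_2)$ along the bands $J_i$, keeping careful track of orientations (so as to obtain $\overline{\tau}_1$ in place of $\tau_1$, and the correct signs in the Maslov index), checking that the auxiliary boundary pieces — the solid-torus caps, the part $U(\tau_1,\tau_2)$ of $\partial R(\tau_1,\tau_2)$, and the rank-$2$ part of $H_1$ coming from the two holes of $P$ (Lemma \ref{P}) — contribute nothing to either the signatures or the Maslov index, and verifying that the corrections produced by the successive gluings combine into the single Maslov triple.
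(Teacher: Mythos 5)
The first half of your argument (comparing $P(\tau_1,\tau_2)$ with $\widetilde P(\tau_1,\tau_2)$ via Novikov--Wall applied to the gluing of $D^2\times P$ along $\partial D^2\times P$) is exactly the paper's argument and is fine. The second half, however, has a genuine gap. In the Novikov--Wall theorem the three Lagrangians do not live in the twisted $H_1$ of the cut hypersurface $X_0$, but in $H_1(Z;\C^\omega)$ with $Z=\partial X_0$. For your decomposition the gluing loci are \emph{pairs} of parallel copies of $D_n\times I_i$ (the two long sides of each band $J_i$), so for each application of Novikov--Wall one has $Z\cong$ two doubles of $D_n$ and, after discarding the circle/torus pieces with vanishing twisted homology, $H_1(Z;\C^\omega)\cong H_1(D_n;\C^{\psi_c,\omega})^{\oplus 4}$ -- not the space $\bigl(-H_1(D_n;\C^{\psi_c,\omega})\bigr)\oplus H_1(D_n;\C^{\psi_c,\omega})$ you assert. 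Consequently the three kernels cannot be identified directly with $\F_\omega(\overline{\tau}_1)$, $\Delta$ and $\F_\omega(\tau_2)$, and the step you defer to ``bookkeeping'' -- showing that the two correction terms produced by your two gluings combine into the single Maslov triple in the smaller space -- is precisely the exact equality the proposition claims; it does not follow from the additivity and antisymmetry properties of Remark \ref{propertymaslov} alone, and it is not an up-to-bounded-error statement that one could wave away.

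The paper avoids this by making a single cut: it chooses a curve $\gamma$ splitting the pair of pants into two annuli, each containing one band, and cuts $\widetilde P(\tau_1,\tau_2)$ along $X_0=Y_{\id_c}$ (the exterior of the trivial tangle in $S^2\times[0,1]$ lying over $\gamma$). The two complementary pieces are then products $Y_{\widehat{\tau}_1}\times I_1$ and $Y_{\widehat{\tau}_2}\times I_2$, hence have zero twisted signature, and there is exactly one Maslov correction; moreover $\partial X_0$ has twisted $H_1$ isomorphic to $H_1(D_n;\C^{\psi_c,\omega})^{\oplus 2}$ (using Lemma \ref{nondeg} and Lemma \ref{zero}), so the three kernels are literally $\ker j_1$, $\ker j_0$, $\ker j_2$, which after the unitary sign flip $\psi(x,x')=(-x,x')$ become $\F_\omega(\overline{\tau}_1)$, $\F_\omega(\id_c)=\Delta$, $\F_\omega(\tau_2)$. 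If you want to salvage your three-piece decomposition you would need an additional lemma reducing the Maslov index in the four-fold sum to the stated one; as written, the key identification is missing.
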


Recall that $\F_\omega(\overline{\tau}_1), \Delta, \F_\omega(\tau_2) $ are totally isotropic subspaces of $$(-H_1(D_n, \C^{\psi_c, \omega}))\oplus H_1(D_n, \C^{\psi_c, \omega})$$ and that the symbol $\simeq$ means that the two quantities in the equation are equal up to a uniformly bounded constant.

\begin{proof}

Let $\gamma$ be a curve which divides the space $P$ in two cylinders (see Figure \ref{pantalone}). 
Call the manifold $(D^2 \times \gamma) \cap P(\tau_1,\tau_2)=X_{\id_c} $ as $X_0$. We cut $ P(\tau_1,\tau_2)$ along $X_0$. The two manifolds obtained are $M_1 \cong  X_{\widehat{\tau}_1} \times I_1$ and $M_2 \cong  X_{\widehat{\tau}_2} \times I_2 $.

Their boundaries split as $\partial M_i = X_{0}\cup_{\partial X_0} X_i$, where $$X_i = X_{\widehat{\tau}_i}\cup_{\partial X_{\widehat{\tau_i}}\times\{0\}}  \left(\partial X_{\widehat{\tau}_i} \times [0,1]\right) \cup_{\partial_v X_{\tau_i}\times\{1\}} X_{\tau_i},$$ 
where $\partial_v X_{\tau_i}$ indicates the vertical part of $\partial X_{\tau_i} \subset D^2 \times[0,1],$ i.e. $\partial X_{\tau_i}$ minus the top and bottom holed disks $D_n$. Notice that $\partial_v X_{\tau_i}$ is homotopically equivalent to a disjoint union of $S^1$'s (for the strings of $\tau$) and tori (for the closed components of $\tau$) and by a combination of Lemma \ref{zero} and the proof of Lemma \ref{nondeg} its twisted homology coincides with the one of $\partial D^2 \subset D_n$ (and therefore it vanishes if and only if $I_c(\omega) \neq 1$). 

Notice that $ \partial X_0$ is diffeomorphic to the double of $D_n$ and we call it $Z$.

The Novikov-Wall non-additivity theorem implies 
\begin{equation*}
    \sign_{\omega}({P}(\tau_1,\tau_2))= \sign_\omega(M_1)+\sign_\omega(M_2)+ \Maslov(L_1,L_0,L_2)
\end{equation*}

where $L_i= \ker (f_i:H_1(Z;\C^\omega) \to H_1(X_i; \C^\omega))$. 
The signatures of $M_1$ and $M_2$ vanish because they deformation retract onto a $3$-manifold.

As it is showed in the proof of Lemma \ref{nondeg} $H_1(\partial D_n; \C^\omega) = H_1(\partial D^2; \C^\omega)$ and  $ H_0(\partial D_n; \C^\omega)=H_0(\partial D^2; \C^\omega)$ (both twisted homology modules have dimension at most $1$ and they vanish if and only if $I_c(\omega) \neq 1$). The Mayer-Vietoris sequence gives $$\ldots \to H_1(\partial D^2; \C^\omega)\to H_1(D_n;\C^\omega) \oplus H_1(D_n;\C^\omega)\to H_1(Z;\C^\omega) \to H_0(\partial D^2; \C^\omega) \to 0,$$ 
hence we can fix a decomposition
$$ H_1(Z;\C^\omega) \cong (H_1(D_n;\C^\omega) \oplus H_1(D_n;\C^\omega))/\Delta_{\text{rad}} \oplus H_0(\partial D^2;\C^\omega). $$
Here $\Delta_\text{rad}$ indicates the diagonal of the radical of the form on $H_1(D_n;\C^\omega)) $, which coincides with the image of $H_1(\partial D^2;\C^\omega). $
The form $-\lambda_{\C^\omega}\oplus\lambda_{\C^\omega} $ induces a well-defined form on $(H_1(D_n;\C^\omega) \oplus H_1(D_n;\C^\omega))/\Delta_{\text{rad}}$ since $\Delta_{\text{rad}}$ is in the radical.
Call $\overline{f}_i$ the map $f_i \circ \iota$, where $$\iota:(H_1(D_n;\C^\omega) \oplus H_1(D_n;\C^\omega))/\Delta_{\text{rad}} \to H_1(Z;\C^\omega)$$ sends $(x,y)$ to $(x,y,0).$ Notice that $\iota$ preserves the intersection form. 
Hence $\iota(\ker \overline{f}_i) \subset L_i$ and $\dim \iota(\ker \overline{f}_i)$ differs from $\dim L_i $ of at most $1$. As a consequence, $\dim((\iota(\ker \overline{f}_1) + \iota(\ker \overline{f}_0)) \cap \iota(\ker \overline{f}_2))$ differs from $\dim(L_1+L_0)\cap L_2$ of at most $3$. Since $\iota$ is injective, this implies that
$$|\Maslov(L_1,L_0,L_2)-\Maslov(\ker \overline{f}_1,\ker \overline{f}_0,\ker \overline{f}_2)|\le 3. $$ 

The diagram of inclusion induced maps
\[
\begin{tikzcd}
H_1(D_n;\C^\omega) \arrow[r]\arrow[dr] & H_1(X_{\tau_i};\C^\omega ) \arrow[d,"g_i"]\\ & H_1(X_i; \C^\omega)
\end{tikzcd}
\]
commutes. Notice that we call the vertical map $g_i$.

The space $\partial_v X_{{\tau}_i} $ does not have necessarily vanishing twisted homology, but by Mayer-Vietoris the kernel of $H_1(X_{\tau_i};\C^\omega) \to H_1(X_i; \C^\omega) $ has at most dimension $1$, for $i=1,2$. 

Let us call
$$j_{0}: H_1(D_n;\C^{\psi_c,\omega}) \oplus H_1(D_n;\C^{\psi_c,\omega}) \to H_1(X_{\id_c}; \C^\omega) $$ the sum of the maps induced by the inclusion of $D_n \times \{0\}$ and $D_n \times \{1\}$ in $X_{\id_c}$. 

Similarly, let
$$j_{1}: H_1(D_n;\C^{\psi_c,\omega}) \oplus H_1(D_n;\C^{\psi_c,\omega}) \to H_1(X_{\overline{\tau}_1}; \C^\omega) $$
be the sum of the maps induced by the inclusion. Notice that we take the reflection of $\tau_1$.

Finally, let
$$j_{2}: H_1(D_n;\C^\omega) \oplus H_1(D_n;\C^\omega) \to H_1(X_{\tau_2}; \C^\omega) $$
be the sum of the maps induced by the inclusion too.

Then $\ker j_i \subset \ker (g_i \circ j_i) $ and their dimension differ at most by $1$. 

As a consequence 
$$| \Maslov(\ker j_1,\ker j_0,\ker j_2) - \Maslov(\ker(g_1 \circ j_1),\ker(g_0 \circ j_0),\ker (g_2 \circ j_2))| \le 3 $$
and therefore, since $\Delta_\text{rad}$ is contained in the radical of the form on $(H_1(D_n;\C^\omega))^2 $, 
$$\Maslov(\ker \overline{f}_1,\ker \overline{f}_0,\ker \overline{f}_2)\simeq  \Maslov(\ker j_1,\ker j_0,\ker j_2) $$

We have shown that 
$$ \sign_{\omega}({P}(\tau_1,\tau_2)) \simeq \Maslov(\ker(j_1),\ker(j_0),\ker(j_2)). $$

Recall that $\F_\omega (\id_c)= \ker( j_0 \circ \psi)$
 where $\psi = \left(-\id_{H_1(D_n;\C^{\psi_c, \, \omega})}\right) \oplus \id_{H_1(D_n;\C^{\psi_c, \, \omega})}$. As a consequence
 $$ \Maslov( \psi^{-1}(\ker(j_1)),\psi^{-1}(\ker(j_0)),\psi^{-1}(\ker(j_2)))=$$ $$ = \Maslov(\F_\omega(\overline{\tau}_1), \F_\omega(\id_c), \F_\omega(\tau_2)) $$
 which concludes the proof.

\end{proof}

\subsubsection{The manifold $C(\tau)$.}

In this section we will build a manifold that has the same twisted signature as the tangle closure, up to a uniformly bounded constant. 

Let $D^4$ denote the oriented unit $4$-ball and denote by $S^3$ its oriented boundary. Let $T=D^2 \times S^1 \hookrightarrow S^3$ be the standard embedding of the solid torus in $S^3$. 
Closing a coloured $(c,c)$-tangle $\tau \subset D^2 \times [0,1]$ yields a coloured link $\widehat{\tau}\subset T$.

Consider a collection $S(\tau)=F_1 \cup \ldots \cup F_\mu$ of oriented and connected surfaces smoothly and properly embedded in $D^4$ that are in general position (their only intersections are transverse double points between different surfaces) and such that for all $i$ the boundary $\partial F_i = F_i \cap S^3$ is the sublink of $\widehat{\tau}$ of colour $i$. 

It is not difficult to show that we can also assume that $S(\tau) \cap \frac{1}{2}S^3$ is the closure of the trivial tangle $\id_c$.

Moreover we will assume that the intersection of $S(\tau)$ with the closure of $D \setminus \frac{1}{2} D^4$ is contained in 
$$ N = \left\{x \in D^4 \,\Big{|}\, \frac{1}{2} \le \|x \| \le 1, \, \frac{x}{\|x\|} \in T \right\}\cong T \times [0,1].$$

One can easily check that such a collection of surfaces exists, and can be obtained for example by pushing a $C$-complex for $\widehat{\tau}$ inside $D^4$ in an appropriate way.

Let $\mathcal{N}(S(\tau)) = \mathcal{N}(F_1) \cup \ldots \cup \mathcal{N}(F_\mu)$ indicate the union of tubular neighbourhoods for the surfaces $F_i$ such that three different tubular neighbourhoods have empty intersection. Notice that $\partial \overline{\mathcal{N}(S(\tau))}$ consists of a part contained in $S^3$, and another part which is properly embedded in $D^4$ and which we call $U(\tau)$.
We will indicate as $C(\tau)$ the $4$-manifold
$$C(\tau)\coloneqq N \setminus (\mathcal{N}(S(\tau)) \cap N). $$
The boundary of $C(\tau)$ is 
$$ \partial C(\tau) = (U(\tau) \cap N) \cup_{\partial \mathcal{N}(\widehat{\tau}) \sqcup \partial \mathcal{N}(\widehat{\id}_c)} \left(X_{\widehat{\tau}} \sqcup (-X_{\widehat{\id}_c})\right) \cup_{\partial T \times \{0,1\}} \partial D^2 \times S^1 \times [0,1].$$

The proof of the following can be found in \cite{ConwayPhD}, Chapter 3.
\begin{lem}
\label{finalmente}
Let $F=F_1 \cup \ldots \cup F_\mu $ be the union of $\mu$ properly embedded, compact, connected and oriented surfaces $F_i \subset D^4$ which only intersect each other transversely in double points. Then $H_1(D^4 \setminus \mathcal{N}(F))$ is freely generated by the meridians of the components $F_i$, where $\mathcal{N}(F)$ indicates the union of open tubular neighbourhoods for the surfaces $F_i$.
\end{lem}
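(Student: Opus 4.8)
The plan is to prove the two halves of the statement separately: that $H_1(D^4\setminus\mathcal N(F))$ is generated by the meridians $m_1,\dots,m_\mu$, and that these $\mu$ classes are $\Z$-linearly independent. Write $W:=D^4\setminus\mathcal N(F)$. Note first that, since each $F_i$ is connected, any two meridians of $F_i$ are isotopic inside $W$ (drag one along a path in $F_i$, keeping it small enough to stay disjoint from the other surfaces where the path crosses a clasp point $F_i\cap F_j$), so there is a single well-defined class $m_i\in H_1(W)$.

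For the generation statement I would argue by general position. Given $[\gamma]\in H_1(W)$, represent it by a loop which, after a small push, lies in the interior of $D^4$. Since $D^4$ is contractible, $\gamma=\partial\Sigma$ for some singular $2$-chain $\Sigma$ in $D^4$, and by transversality we may take $\Sigma$ to meet $F=\bigcup_i F_i$ in finitely many points, all lying in the interiors of the $F_i$ and away from the clasp set $\bigcup_{i\neq j}(F_i\cap F_j)$, which is finite and hence generically avoided. Deleting from $\Sigma$ a small disk around each such point exhibits $\gamma-\sum_k\pm m_{i(k)}$ as the boundary of a $2$-chain in $W$; hence $[\gamma]\in\sum_i\Z m_i$.

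For independence I would build a left inverse to the resulting surjection $\Z^\mu\twoheadrightarrow H_1(W)$, $e_i\mapsto m_i$. For each $i$ define $\ell_i\colon H_1(W)\to\Z$ by $\ell_i([\gamma])=\Sigma\cdot F_i$, the algebraic intersection number with $F_i$ of a $2$-chain $\Sigma\subset D^4$ bounding $\gamma$, taken transverse to $F_i$ with $\gamma$ pushed into the interior. This is well defined: any two such chains differ by a $2$-cycle, which is null-homologous in $D^4$ and hence has zero algebraic intersection with $F_i$ --- here one uses that $F_i$ is properly embedded, so $\partial F_i\subset S^3$ and all intersections take place in the interior. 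Since a small meridian of $F_j$ bounds a normal disk meeting $F$ in the single point of $F_j$ at its center, one gets $\ell_i(m_j)=\delta_{ij}$. Thus $(\ell_1,\dots,\ell_\mu)$ splits the surjection above, so $H_1(W)\cong\Z^\mu$, freely generated by the meridians.

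The main obstacle, and the only place where anything beyond the case of disjoint surfaces enters, is the presence of the clasp points $F_i\cap F_j$. I would isolate two local checks: first, that a meridian of $F_i$ can be dragged across a clasp point while staying in $W$ and keeping its homology class --- a computation in the $S^3$ linking the double point, where $F$ meets this linking sphere in a Hopf link; and second, that the $2$-chains $\Sigma$ in the last two paragraphs can be chosen to avoid the finite clasp set, which is automatic by a dimension count. An alternative would be a Mayer--Vietoris argument for $D^4=\overline{\mathcal N(F)}\cup W$, but the homotopy-type bookkeeping of $\overline{\mathcal N(F)}$ and of its frontier near the clasp points is messier than these two local checks, so I would keep the geometric argument above.
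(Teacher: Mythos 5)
Your argument is correct, and it is worth noting that the paper itself does not prove this lemma at all: it simply cites Chapter 3 of Conway's thesis, where the computation is carried out by algebraic-topology machinery (a Mayer--Vietoris/duality analysis of the exterior of the surfaces). Your proposal replaces that with a self-contained geometric proof: generation of $H_1(D^4\setminus\mathcal N(F))$ by meridians via transversality of a bounding $2$-chain (possible because $D^4$ is simply connected), and freeness via the linking-number homomorphisms $\ell_i([\gamma])=\Sigma\cdot F_i$, well defined because $H_2(D^4)=0$ and $\partial F_i\subset S^3$ keeps the relevant intersections in the interior, with $\ell_i(m_j)=\delta_{ij}$ splitting the surjection $\Z^\mu\to H_1$. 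This is more elementary than the cited route and exactly the right mechanism; the Mayer--Vietoris alternative you mention would indeed require care with the plumbing regions around the double points, so your choice is reasonable. Two small corrections: the intersections $F_i\cap F_j$ here are transverse double points in the interior of $D^4$, not clasps (clasps are the $C$-complex intersections in $S^3$); and when you drag a meridian of $F_i$ along a path to identify all meridians of $F_i$ in homology, you should choose the path in $F_i$ to \emph{avoid} the finitely many double points (they are isolated in the surface), rather than cross them --- in the standard local model the meridian circle based at a double point lies inside the other surface, so ``keeping it small'' does not by itself guarantee disjointness there, whereas along a path missing the double points a sufficiently small meridian annulus stays in the exterior. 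With that adjustment the proof is complete.
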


\begin{rmk}
Notice that the isomorphism in Lemma \ref{finalmente} induces a canonical map $$H_1(D^4 \setminus \mathcal{N}(S(\tau))) \to \Z^\mu. $$ 
Precomposing with the inclusion induced map $H_1(C(\tau)) \to H_1(D^4 \setminus \mathcal{N}(S(\tau)))$ and fixing $\omega \in T^\mu_*$ we get as usual a $\Z[\pi_1(C(\tau))]$-module structure on $\C$. 
\end{rmk}

\begin{thm}
\label{Ctau}
For every $\omega \in T^\mu_*$ and for every $(c,c)$-tangle $\tau$

$$ \sign_\omega (C(\tau))\simeq \sigma_\omega(\widehat{\tau}).$$
\end{thm}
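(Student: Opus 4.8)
The plan is to identify $C(\tau)$, up to a controlled modification, with the manifold $W_F$ from Theorem \ref{CNT}, so that the statement follows from the $4$-dimensional interpretation of the multivariate signature. Recall that $C(\tau) = N \setminus (\mathcal{N}(S(\tau)) \cap N)$, where $N \cong T \times [0,1]$ is a collar of $S^3$ inside $D^4$ and $S(\tau)$ is a coloured bounding surface for $\widehat\tau$ chosen so that $S(\tau) \cap \tfrac12 S^3$ is the closure of $\id_c$ and the part of $S(\tau)$ outside $\tfrac12 D^4$ lives in $N$. First I would observe that $W_{S(\tau)} = D^4 \setminus \mathcal{N}(S(\tau))$ decomposes as $W_{S(\tau)} = \left(\tfrac12 D^4 \setminus \mathcal{N}(S(\tau))\right) \cup_{\frac12 S^3 \setminus \mathcal{N}(S(\tau))} C(\tau)$, i.e.\ $C(\tau)$ is $W_{S(\tau)}$ with a neighbourhood of the inner half-ball removed. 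Since $S(\tau)$ meets $\tfrac12 S^3$ in the trivial closure $\widehat{\id}_c$, the inner piece $\tfrac12 D^4 \setminus \mathcal{N}(S(\tau))$ is itself homeomorphic to $C(\id_c)$-type data and, more importantly, it is built from pushing in a $C$-complex for an unlink-like configuration, so it is homotopically simple.

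The key computation is that the inner piece $V \coloneqq \tfrac12 D^4 \setminus \mathcal{N}(S(\tau))$ has vanishing twisted middle-dimensional intersection form: its twisted signature is $0$. I would prove this by noting that $S(\tau) \cap \tfrac12 D^4$ can be taken to be a collection of discs (a pushed-in Seifert-type surface for the trivial tangle closure), so $V$ deformation retracts onto something of the homotopy type of a wedge of circles and $2$-spheres with trivial intersection form, or alternatively compute directly that $H_2(V;\C^\omega)$ carries a metabolic (hyperbolic) or zero form. Then I apply Novikov--Wall non-additivity to the decomposition $W_{S(\tau)} = V \cup C(\tau)$: the gluing hypersurface is $X_0 = \tfrac12 S^3 \setminus \mathcal{N}(S(\tau)) = X_{\widehat{\id}_c}$, and
\begin{equation*}
\sign_\omega(W_{S(\tau)}) = \sign_\omega(V) + \sign_\omega(C(\tau)) + \Maslov(L_-, L_0, L_+).
\end{equation*}
By Theorem \ref{CNT} the left-hand side is $\sigma_\omega(\widehat\tau)$, and $\sign_\omega(V) = 0$ by the computation above. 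It then remains to show the Maslov correction term vanishes. Here I would use that $X_0$ is glued to $V$ along essentially all of $\partial V$ (the boundary of the inner piece that is not shared with $C(\tau)$ has vanishing twisted homology, by a Mayer--Vietoris argument using Lemma \ref{zero} and the triviality of $\partial_v\mathcal{N}$), which forces one of the three isotropic subspaces to be all of $H_1(X_0;\C^\omega)$ or $\{0\}$; in either case the Maslov index of the triple is $0$ by the properties in Remark \ref{propertymaslov}.

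The main obstacle I anticipate is twofold: first, being careful about orientations and the precise identification of the pieces of $\partial C(\tau)$ (the decomposition of $\partial C(\tau)$ written just before Lemma \ref{finalmente} has several parts, and one must check that the gluing in $W_{S(\tau)} = V \cup C(\tau)$ matches up the right boundary components with compatible orientations so that the signatures add with the right signs). Second, verifying that the twisted homology of the inner piece $V$ and of the auxiliary hypersurface $X_0$ behaves as claimed — this uses in an essential way that we arranged $S(\tau) \cap \tfrac12 S^3 = \widehat{\id}_c$, and one should double-check that such a bounding surface $S(\tau)$ really exists with all the stated properties (the excerpt asserts this but a clean proof of Theorem \ref{Ctau} should at least indicate why pushing in a $C$-complex, then isotoping it to be a product near $\tfrac12 S^3$, achieves it). Once these topological identifications are in place, the signature bookkeeping via Theorem \ref{CNT} and Novikov--Wall is routine.
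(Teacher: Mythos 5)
There is a genuine gap in your decomposition. You claim that $W_{S(\tau)} = \bigl(\tfrac12 D^4 \setminus \mathcal{N}(S(\tau))\bigr) \cup C(\tau)$, i.e.\ that $C(\tau)$ is the exterior $W_{S(\tau)}$ with the inner half-ball removed. This is false: $N$ is only the cone over the solid torus $T=D^2\times S^1$, i.e.\ $N\cong T\times[0,1]$, not the whole collar $S^3\times[0,1]$. Consequently $V\cup C(\tau)$ misses the thickened complementary solid torus, a piece homeomorphic to $\partial D^2\times D^2\times[0,1]$, which lies in $W_{S(\tau)}$ (the surface avoids it) but in neither $V$ nor $C(\tau)$; relatedly, $V$ meets $C(\tau)$ only along $X_{\widehat{\id}_c}=\tfrac12 T\setminus\mathcal{N}(\widehat{\id}_c)$, not along all of $\tfrac12 S^3\setminus\mathcal{N}(S(\tau))$, so your identification of the gluing hypersurface is also off. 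Repairing this forces a second Novikov--Wall gluing, of $C(\tau)$ with $\partial D^2\times D^2\times[0,1]$ along $\partial D^2\times S^1\times[0,1]$, whose boundary consists of two tori whose twisted homology does not vanish for arbitrary $\omega\in T^\mu_*$; this contributes a Maslov correction that is only uniformly bounded, not zero. This is exactly how the paper proceeds (first enlarge $C(\tau)$ to $M=C(\tau)\cup\partial D^2\times D^2\times[0,1]$ with a bounded defect, then glue in the inner piece along $Y_0=\tfrac12 S^3\setminus\mathcal{N}(\widehat{\id}_c)$ with zero defect), and it is why the paper's proof only establishes $\sign_\omega(C(\tau))\simeq\sigma_\omega(\widehat\tau)$, which suffices for Theorem \ref{main} via Proposition \ref{reduction}. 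Your plan, as written, would deliver exact equality for every $\omega\in T^\mu_*$ by an argument that cannot work without accounting for the missing piece.

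Two smaller points. For the inner piece $V=\tfrac12 D^4\setminus\mathcal{N}(S(\tau))$ you do not need to arrange $S(\tau)\cap\tfrac12 D^4$ to be discs and compute homotopy types: $V$ is by construction the exterior in a $4$-ball of a coloured bounding surface for $\widehat{\id}_c$, so Theorem \ref{CNT} gives $\sign_\omega(V)=\sigma_\omega(\widehat{\id}_c)=0$ directly; this is what the paper does. Finally, the correct reason the Maslov term vanishes for the gluing along $Y_0$ is not that one of the three isotropic subspaces is forced to be all of $H_1$ or $\{0\}$, but that the Maslov form lives in $H_1(\partial Y_0;\C^\omega)=H_1(\partial\mathcal{N}(\widehat{\id}_c);\C^\omega)$, which vanishes by Lemma \ref{zero}, so all three subspaces live in the zero space.
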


\begin{proof}
Let $W_{\widehat{\tau}}$ indicate $D^4 \setminus \mathcal{N}(S(\tau))$. We know by the four-dimensional interpretation of the multivariate Levine-Tristram signature, studied by Conway, Nagel and Toffoli in \cite{ConwayNagelToffoli}, that
$$ \sign_\omega(W_{\widehat{\tau}}) = \sigma_\omega(\widehat{\tau}), $$
where $W_{\widehat{\tau}}$ is the exterior of a coloured bounding surface for $\widehat{\tau}$ in $D^4$.
We just need to show that 
$$ \sign_\omega(W_{\widehat{\tau}}) \simeq \sign_\omega (C(\tau)).$$ 

We can glue to $N \cong T \times [0,1]$ a copy of $\partial D^2 \times D^2 \times [0,1]$ along $\partial D^2 \times S^1 \times [0,1]$ to obtain the closed space $D^4 \setminus \frac{1}{2}\mathring{D}^4 \simeq S^3 \times [0,1]$. Hence gluing $C(\tau)$ and  $\partial D^2 \times D^2 \times [0,1]$ along $X_0:=\partial D^2 \times S^1 \times [0,1]$ gives us $M:=W_{\widehat{\tau}} \setminus (\frac{1}{2}\mathring{D}^4 \cap W_{\widehat{\tau}}) $.
Note that $\Sigma:= \partial X_0$ consists of two disjoint tori. The boundary of $C(\tau)$ is equal to  $(U(\tau) \cap N) \cup (X_{\widehat{\tau}} \sqcup (X_{\widehat{\id}_c})) \cup_{\Sigma} X_0$ while the boundary of $\partial D^2 \times D^2 \times [0,1]$ is $X_0 \cup_{\Sigma} (\partial D^2 \times D^2 \times \{0,1\})$. 

Since the Maslov index term is uniformly bounded by the dimension of $H_1(\Sigma; \C^\omega)$, Novikov-Wall yields

$$ \sign_\omega(M) \simeq \sign_\omega(C(\tau)) + \sign_\omega(\partial D^2 \times D^2 \times [0,1])= \sign_\omega(C(\tau)) $$
where the second term vanishes due to the fact that $\partial D^2 \times D^2 \times [0,1] $ deformation retracts onto $\partial D^2 .$

Next, glue $M$ and $W_{\widehat{\tau}} \cap \frac{1}{2}D^4 \cong W_{\widehat{\id}_c}$ along $Y_0:=\frac{1}{2}S^3 \setminus \mathcal{N}(\widehat{\id}_c)$.  
Lemma \ref{zero} applied to $\widehat{\id}_c$ implies that in this case Novikov-Wall theorem applies trivially:
$$ \sign_{\omega}(W_{\widehat{\tau}})= \sign_{\omega}(M)+ \sign_{\omega}(W_{\widehat{\id}_c})= \sign_{\omega}(M)+ \sigma_\omega(\widehat{\id}_c)= \sign_{\omega}(M).$$

This concludes the proof.

\end{proof}

\subsubsection{The manifold $M(\tau_1,\tau_2)$.}

Our goal now is to glue some copies of $C(\tau)$ and $P(\tau_1,\tau_2)$ along their boundary in order to obtain an oriented $4$-manifold $M(\tau_1,\tau_2)$.

Recall that 
$$\partial C(\tau)= (U(\tau) \cap N) \cup_{\partial \mathcal{N}(\widehat{\tau}) \sqcup \partial \mathcal{N}(\widehat{\id}_c)} \left(X_{\widehat{\tau}} \sqcup (-X_{\widehat{\id}_c})\right) \cup_{\partial T \times \{0,1\}} \partial D^2 \times S^1 \times [0,1] $$
and 
$$\partial P(\tau_1,\tau_2) = U(\tau_1,\tau_2) \cup_{V}((-X_{\widehat{\tau}_1}) \sqcup (-X_{\widehat{\tau}_2}) \sqcup X_{\widehat{\tau_1\tau_2}}) \cup_{\partial D^2 \times \partial P} (\partial D^2\times P),  $$
where $P$ denotes the pair of pants and $V= \partial \mathcal{N}(\widehat{\tau}_1) \sqcup \partial \mathcal{N}(\widehat{\tau}_2) \sqcup \partial \mathcal{N}(\widehat{\tau_1\tau_2})$. 

We can hence think of gluing $ P(\tau_1,\tau_2)$ on ``one side'' of the disjoint union of $C(\tau_1), C(\tau_2)$ and $C(\tau_1\tau_2)$ and $P(\id_c,\id_c)$ on ``the other side''. More precisely, set $M(\tau_1,\tau_2)$ equal to
$$(-P(\tau_1,\tau_2)) \cup_{Z}  \left(C(\tau_1) \sqcup C(\tau_2) \sqcup (-C(\tau_1\tau_2))\right) \cup_{W} P(\id_c,\id_c)$$
with $Z=X_{\widehat{\tau}_1} \sqcup X_{\widehat{\tau}_2} \sqcup X_{\widehat{\tau_1\tau_2}}$ and $W= X_{\widehat{\id}_c} \sqcup X_{\widehat{\id}_c} \sqcup X_{\widehat{\id}_c}.$

\begin{figure}[H]
    \centering
    \includegraphics[width = 9cm]{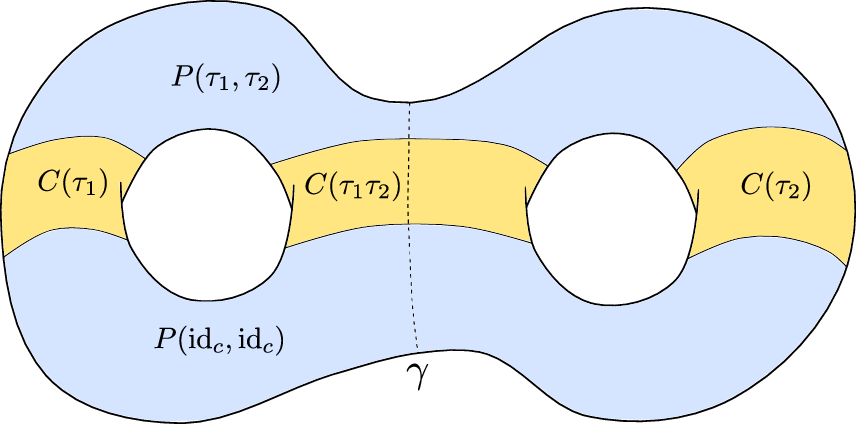}
    \caption{$M(\tau_1,\tau_2)$}
    \label{M}
\end{figure}

\begin{prop}
The $4$-manifold $M(\tau_1,\tau_2)$ can be endowed with an orientation such that $$\sign_\omega(M(\tau_1,\tau_2)) \simeq \sigma_{\omega}(\widehat{\tau_1\tau_2}) -  \sigma_{\omega}(\widehat{\tau}_1)- \sigma_{\omega}(\widehat{\tau}_2) -\Maslov(\F_\omega(\overline{\tau}_1), \F_\omega(\id_c), \F_\omega(\tau_2)).$$ 
\end{prop}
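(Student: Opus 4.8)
The plan is to assemble the twisted signature of $M(\tau_1,\tau_2)$ from the signatures of its three building blocks $C(\tau_1)$, $C(\tau_2)$, $-C(\tau_1\tau_2)$, $-P(\tau_1,\tau_2)$ and $P(\id_c,\id_c)$ by repeated application of the Novikov–Wall non-additivity theorem, and to show that all the Maslov-index correction terms that arise are uniformly bounded (hence negligible under $\simeq$) except for those we want to keep. First I would fix an orientation on $M(\tau_1,\tau_2)$ so that the orientation conventions on the gluing hypersurfaces $Z = X_{\widehat{\tau}_1}\sqcup X_{\widehat{\tau}_2}\sqcup X_{\widehat{\tau_1\tau_2}}$ and $W = X_{\widehat{\id}_c}\sqcup X_{\widehat{\id}_c}\sqcup X_{\widehat{\id}_c}$ agree with the orientations coming from $\pm P$ and $\pm C(\tau_i)$; the sign $-C(\tau_1\tau_2)$ and $-P(\tau_1,\tau_2)$ are chosen precisely so that the signatures add up with the correct signs to produce $\sigma_\omega(\widehat{\tau_1\tau_2}) - \sigma_\omega(\widehat\tau_1) - \sigma_\omega(\widehat\tau_2)$.

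The key computation runs as follows. Apply Novikov–Wall to the decomposition of $M(\tau_1,\tau_2)$ along $Z$ first, splitting off $-P(\tau_1,\tau_2)$ from the rest; then apply it again along $W$, splitting off $P(\id_c,\id_c)$. Each application contributes a term $\sign_\omega$ of a piece plus a Maslov index of three isotropic subspaces of $H_1$ of the cutting hypersurface. By Lemma \ref{zero} the relevant boundary tori $\partial\mathcal{N}(\widehat{\tau}_i)$ and $\partial\mathcal{N}(\widehat{\id}_c)$ have vanishing twisted homology, so the splitting hypersurfaces $Z$ and $W$ have twisted $H_1$ of dimension bounded independently of $\tau_1,\tau_2$ (they depend only on $c$ through copies of $H_1(D_n;\C^{\psi_c,\omega})$ and contractible or homology-trivial pieces); hence those Maslov corrections are uniformly bounded and absorbed into $\simeq$. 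This leaves
$$\sign_\omega(M(\tau_1,\tau_2)) \simeq \sign_\omega(-P(\tau_1,\tau_2)) + \sign_\omega(C(\tau_1)) + \sign_\omega(C(\tau_2)) + \sign_\omega(-C(\tau_1\tau_2)) + \sign_\omega(P(\id_c,\id_c)).$$
Now $\sign_\omega(-P(\tau_1,\tau_2)) = -\sign_\omega(P(\tau_1,\tau_2))$ and $\sign_\omega(-C(\tau_1\tau_2)) = -\sign_\omega(C(\tau_1\tau_2))$, so by Theorem \ref{Ctau} the three $C$-terms give $\sigma_\omega(\widehat{\tau}_1) + \sigma_\omega(\widehat{\tau}_2) - \sigma_\omega(\widehat{\tau_1\tau_2})$ — wait, with the sign: $\sign_\omega(C(\tau_1))+\sign_\omega(C(\tau_2))-\sign_\omega(C(\tau_1\tau_2)) = \sigma_\omega(\widehat\tau_1)+\sigma_\omega(\widehat\tau_2)-\sigma_\omega(\widehat{\tau_1\tau_2})$, and by Proposition \ref{PMaslov} we have $\sign_\omega(P(\tau_1,\tau_2)) \simeq \Maslov(\F_\omega(\overline{\tau}_1),\F_\omega(\id_c),\F_\omega(\tau_2))$ while $\sign_\omega(P(\id_c,\id_c)) \simeq \Maslov(\F_\omega(\overline{\id}_c),\F_\omega(\id_c),\F_\omega(\id_c))$, the latter being a fixed quantity depending only on $c$, hence bounded and absorbed. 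Combining, and noting the overall sign, yields
$$\sign_\omega(M(\tau_1,\tau_2)) \simeq \sigma_\omega(\widehat{\tau_1\tau_2}) - \sigma_\omega(\widehat{\tau}_1) - \sigma_\omega(\widehat{\tau}_2) - \Maslov(\F_\omega(\overline{\tau}_1),\F_\omega(\id_c),\F_\omega(\tau_2)),$$
which is the claim.

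The main obstacle I anticipate is bookkeeping the orientations and the precise identification of the gluing regions: one must verify that the hypersurfaces along which $P(\tau_1,\tau_2)$, the $C(\tau_i)$'s, and $P(\id_c,\id_c)$ are glued really do match up (the $X_{\widehat{\tau}_i}$ boundary pieces of $\partial C(\tau_i)$ with those of $\partial P(\tau_1,\tau_2)$, and the $X_{\widehat{\id}_c}$ pieces with $\partial P(\id_c,\id_c)$), that the leftover boundary pieces $U(\tau)$, $U(\tau_1,\tau_2)$, $\partial D^2\times(\cdots)$ glue to form a closed manifold or at least contribute nothing, and that the signs on $-P(\tau_1,\tau_2)$ and $-C(\tau_1\tau_2)$ are forced by a consistent global orientation on $M(\tau_1,\tau_2)$. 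A secondary point requiring care is confirming that every hypersurface arising in the two Novikov–Wall applications has uniformly bounded twisted first homology — this follows from Lemma \ref{zero} together with the Mayer–Vietoris computations already used in the proofs of Lemma \ref{nondeg} and Proposition \ref{PMaslov}, since all the $\tau$-dependence is concentrated in homologically trivial collar pieces, but it should be checked explicitly for each gluing.
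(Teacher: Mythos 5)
Your overall strategy is the same as the paper's: cut $M(\tau_1,\tau_2)$ along the two gluing loci, apply Novikov--Wall twice, use Lemma \ref{zero} to show the correction terms are uniformly bounded, and then feed in Theorem \ref{Ctau} and Proposition \ref{PMaslov}. However, your final bookkeeping contains a sign error that ``noting the overall sign'' cannot repair. With the signs you use ($-P(\tau_1,\tau_2)$, $+C(\tau_1)$, $+C(\tau_2)$, $-C(\tau_1\tau_2)$, $+P(\id_c,\id_c)$) your own computation gives
$$\sign_\omega(M(\tau_1,\tau_2)) \simeq \sigma_\omega(\widehat{\tau}_1)+\sigma_\omega(\widehat{\tau}_2)-\sigma_\omega(\widehat{\tau_1\tau_2})-\Maslov(\F_\omega(\overline{\tau}_1),\F_\omega(\id_c),\F_\omega(\tau_2)),$$
and reversing the global orientation of $M(\tau_1,\tau_2)$ negates \emph{every} term on the right, including the Maslov term, so you would land on $\sigma_\omega(\widehat{\tau_1\tau_2})-\sigma_\omega(\widehat{\tau}_1)-\sigma_\omega(\widehat{\tau}_2)+\Maslov(\cdots)$, which is not the claimed formula. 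The point is that the relative signs of the pieces are not free: once an orientation of $M(\tau_1,\tau_2)$ is chosen, the orientations of the pieces are forced by compatibility along the gluing hypersurfaces. Since $\partial C(\tau)$ contains $X_{\widehat{\tau}}$ with a $+$ sign while $\partial P(\tau_1,\tau_2)$ contains $(-X_{\widehat{\tau}_1})\sqcup(-X_{\widehat{\tau}_2})\sqcup X_{\widehat{\tau_1\tau_2}}$, the orientation-compatible configuration is $-P(\tau_1,\tau_2)$ glued to $(-C(\tau_1))\sqcup(-C(\tau_2))\sqcup C(\tau_1\tau_2)$; this is what the paper's proof actually uses (the displayed definition of $M(\tau_1,\tau_2)$ has the $C$-signs reversed, which is what misled you), and with these signs the stated identity comes out directly, with no overall-sign maneuver.

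A secondary imprecision: in Novikov--Wall the correction is the Maslov index of three isotropic subspaces of $H_1$ of the \emph{corner} $\partial X_0$ (a surface), not of the separating $3$-manifold $X_0=X_{\widehat{\tau}_1}\sqcup X_{\widehat{\tau}_2}\sqcup X_{\widehat{\tau_1\tau_2}}$, so what must be bounded is $\dim H_1(\partial X_0;\C^\omega)$. Your inference ``the $\partial\mathcal{N}(\widehat{\tau}_i)$ have vanishing twisted homology, hence the splitting hypersurfaces have bounded twisted $H_1$'' is not valid as stated (vanishing homology of the boundary does not bound the homology of a $3$-manifold, and $H_1(X_{\widehat{\tau}_i};\C^\omega)$ does depend on the tangles), but the bound you actually need is fine: by Lemma \ref{zero}, $H_1(\partial X_0;\C^\omega)$ reduces to the homology of the three tori $\partial D^2\times S^1$, whose dimension is independent of $\tau_1,\tau_2$. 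With these two corrections your argument coincides with the paper's proof.
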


\begin{proof}
Start gluing $ -P(\tau_1,\tau_2)$ and $(-C(\tau_1)) \sqcup (-C(\tau_2)) \sqcup C(\tau_1\tau_2)$ along $$X_0= X_{\widehat{\tau}_1} \sqcup X_{\widehat{\tau}_2} \sqcup X_{\widehat{\tau_1\tau_2}}$$ and call $M$ the resulting manifold. 

Notice that $\partial X_0$ is the disjoint union of three tori $\partial D^2 \times S^1$ and of the boundaries of tubular neighbourhoods of the three tangles' closures: $$\partial \mathcal{N}(\widehat{\tau}_1), \, \partial \mathcal{N}(\widehat{\tau}_2), \, \partial \mathcal{N}(\widehat{\tau_1\tau_2}).$$ By Lemma \ref{zero} the twisted homology modules of the last three spaces vanish, hence the twisted homology of $ \partial X_0$ is the direct sum of the twisted homology of three tori and therefore the Maslov index term is bounded by the dimension of the first twisted homology module of $\partial X_0$, which does not depend on the tangles. Hence Proposition \ref{PMaslov} and Theorem \ref{Ctau} imply
$$ \sign_\omega (M)\simeq \sigma_{\omega}(\widehat{\tau_1\tau_2}) -  \sigma_{\omega}(\widehat{\tau}_1)- \sigma_{\omega}(\widehat{\tau}_2) -\Maslov(\F_\omega(\overline{\tau}_1), \F_\omega(\id_c), \F_\omega(\tau_2)).$$
We can repeat the same reasoning to the gluing of $P(\id_c, \id_c)$ and obtain that $\sign_\omega(M(\tau_1,\tau_2))$ equals $$\sigma_{\omega}(\widehat{\tau_1\tau_2}) -  \sigma_{\omega}(\widehat{\tau}_1)- \sigma_{\omega}(\widehat{\tau}_2) -\Maslov(\F_\omega(\overline{\tau}_1), \F_\omega(\id_c), \F_\omega(\tau_2))+  \Maslov (\Delta, \Delta, \Delta). $$
But $\Maslov (\Delta, \Delta, \Delta)=0 $, hence the result.

\end{proof}

To prove Theorem \ref{main} it only remains to show that the twisted signature of $M(\tau_1,\tau_2)$ is uniformly bounded by a constant.

Notice that $M(\tau_1, \tau_2)$ is contained in the manifold
$ D^2 \times S_2  $
where $S_2$ indicates the genus $2$ surface obtained by gluing to the pair of pants three cylinders $S^1 \times [0,1]$ and then gluing another pair of pants.

\begin{lem}
For a good choice of surfaces $S(\tau_1), S(\tau_2)$ and $S(\tau_1\tau_2)$ in the construction of $C(\tau_1),$ $C(\tau_2)$ and $C(\tau_1\tau_2)$ there exists a curve $\gamma$ in $S_2$ such that $ D^2 \times \gamma $ intersects the surface $T(\tau_1,\tau_2) \cup S(\tau_1)\cup S(\tau_2) \cup S(\tau_1\tau_2) \cup T(\id_c,\id_c) \subset D^2 \times S_2$ in $n$ disjoint circles $\{x_1,\ldots, x_n\}\times \gamma$.
\end{lem}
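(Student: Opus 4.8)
The plan is to exhibit the curve $\gamma$ explicitly by reading off the product structure of the ambient manifold. Recall that $M(\tau_1,\tau_2)$ sits inside $D^2\times S_2$, where $S_2$ is assembled from the pair of pants $P$ (carrying $P(\tau_1,\tau_2)$), three cylinders $S^1\times[0,1]$ (carrying the copies of $C(\tau_i)$ and $C(\tau_1\tau_2)$, since each $C(\tau)$ lives in $N\cong T\times[0,1]\subset D^2\times S^1\times[0,1]$), and a second pair of pants (carrying $P(\id_c,\id_c)$). In each of these five pieces the relevant surface is, away from the gluing regions, a trivial product $\{x_1,\dots,x_n\}\times(\text{base piece})$: this is exactly how $T(\tau_1,\tau_2)$, $T(\id_c,\id_c)$ were defined outside $J_1\cup J_2$, and how $S(\tau_i)$ was arranged to meet $N$ (we used the freedom to choose $S(\tau)\cap\frac12 S^3$ to be $\widehat{\id}_c$ and to keep $S(\tau)$ inside $N$ near the boundary). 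The braided part of each tangle is confined to the thickened arcs $J_1,J_2$ in $P$ and to the analogous collars where $C(\tau_i)$ meets $X_{\widehat\tau_i}$.

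First I would fix, inside each of the two pairs of pants, a separating curve disjoint from $J_1\cup J_2$ — concretely, in $P$ take $\gamma$ to be the curve already used in the proof of Proposition \ref{PMaslov} that splits $P$ into two cylinders, chosen on the side of $P$ where the surface is still the trivial product $\{x_1,\dots,x_n\}\times(\,\cdot\,)$. Over such a curve, $D^2\times\gamma$ meets the surface in precisely $\{x_1,\dots,x_n\}\times\gamma$, which is $n$ disjoint circles. Then I would check that this local curve can be completed to a single simple closed curve in $S_2$: since $S_2$ is genus $2$ and the gluing loci are circles $S^1\times\{*\}$ along which every relevant surface is the trivial $n$-point product, I can isotope $\gamma$ through the cylinders and the gluing annuli keeping it transverse to the $S^1$-direction, so that at no point does it enter a region where a tangle is genuinely braided. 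The phrase ``for a good choice of surfaces'' is what lets me assume $S(\tau_i)$ has been pushed into $N$ with the trivial product form near the relevant collar, so that the completion of $\gamma$ stays disjoint from the non-product part of $S(\tau_1)\cup S(\tau_2)\cup S(\tau_1\tau_2)$ as well.

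Having placed $\gamma$, the intersection count is immediate: $(D^2\times\gamma)\cap\bigl(T(\tau_1,\tau_2)\cup S(\tau_1)\cup S(\tau_2)\cup S(\tau_1\tau_2)\cup T(\id_c,\id_c)\bigr)$ is computed piece by piece, and in every piece $\gamma$ passes over a region where the surface is $\{x_1,\dots,x_n\}\times(\text{arc or circle})$, so the intersection is $\{x_1,\dots,x_n\}\times\gamma$, i.e. $n$ disjoint circles. I would also note that $\gamma$ is non-separating in $S_2$ (it runs through one of the handles), which is what one wants for the use of this lemma — cutting $D^2\times S_2$ along $D^2\times\gamma$ should produce a connected piece over which the subsequent signature computation simplifies.

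The main obstacle is the global completion step: making precise that the separating curve in the pair of pants can be extended across the three cylinders and the second pair of pants \emph{without} being forced through a braided portion of any of the surfaces. This is essentially a statement about the position of $S(\tau_1),S(\tau_2),S(\tau_1\tau_2)$ inside $N$; it is exactly here that one must invoke the earlier freedom (``it is not difficult to show that we can also assume $S(\tau)\cap\frac12 S^3$ is the closure of the trivial tangle'', and that $S(\tau)$ meets $\overline{D^4\setminus\frac12 D^4}$ inside $N\cong T\times[0,1]$) to arrange that each $C(\tau)$ agrees with a product $D_n\times[0,1]$ in a collar of the face that gets glued to the cylinders of $S_2$. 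Once that normal form is in place the rest is a routine picture-chase, so I would spend the bulk of the argument nailing down that normal form and only sketch the transversality/intersection bookkeeping.
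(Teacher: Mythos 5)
Your proposal has the right local pieces (the arc in each pair of pants avoiding $J_1\cup J_2$, over which $T(\tau_1,\tau_2)$ and $T(\id_c,\id_c)$ are trivial products), but it misses what the lemma is actually about. The curve needed here --- and the one used immediately afterwards to cut $M(\tau_1,\tau_2)$ --- is assembled from four arcs: $\gamma_1\subset P$ with endpoints $s_1,s_2$ on the \emph{outer} boundary circle of $P$, the two vertical arcs $\{s_1\}\times[0,1]$ and $\{s_2\}\times[0,1]$ running the full length of the cylinder carrying $C(\tau_1\tau_2)$, and an analogous arc $\gamma_3$ in the second pair of pants. Hence $D^2\times\gamma$ meets the pushed-in bounding surface $S(\tau_1\tau_2)$ along two entire radial slabs $D^2\times\{s_i\}\times[0,1]$, and one needs $S(\tau_1\tau_2)\cap(D^2\times\{s_i\}\times[0,1])=\{x_1,\ldots,x_n\}\times\{s_i\}\times[0,1]$ along the whole slab, at two angles $s_1,s_2$ that moreover separate $\tau_1$ from $\tau_2$ in the $S^1$-direction. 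Your normal form --- ``$C(\tau)$ agrees with a product in a collar of the face that gets glued'' --- is far too weak for this, and no isotopy-rel-boundary picture-chase will produce it for an arbitrary bounding surface of $\widehat{\tau_1\tau_2}$: a generic surface for the composite closure has no reason to admit two trivial radial slices separating the two tangles. The paper obtains them by an actual construction, which is the content of ``for a good choice of surfaces'': take $C$-complexes $F_1,F_2$ for $\widehat{\tau}_1,\widehat{\tau}_2$, form a $C$-complex for $\widehat{\tau_1\tau_2}$ by joining them with bands far from the tangles, and push it into $D^4$ so that the fibre disks $D^2\times\{s_i\}$ meet it only in the $n$ trivial strands. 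This construction is exactly the step you defer as ``routine''.

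There is also a genuine error in your global picture: you assert that $\gamma$ is non-separating in $S_2$ and that cutting should give a connected piece. The curve required is \emph{separating}: it splits $S_2$ into a $\tau_1$-side and a $\tau_2$-side (it crosses the $C(\tau_1\tau_2)$-cylinder twice, at $s_1$ and $s_2$), and cutting $M(\tau_1,\tau_2)$ along $D_n\times\gamma$ produces the two pieces $Q(\tau_1)$ and $Q(\tau_2)$, each depending on only one tangle, which is precisely what the final estimate $\sign_\omega(Q(\tau))\simeq 0$ exploits. A non-separating curve through one handle would instead run through one of the $C(\tau_i)$-cylinders (requiring yet another unaddressed radial-triviality condition on $S(\tau_i)$) and would not decouple the two tangles. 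Note also that the literal statement of the lemma is satisfied by a small null-homotopic circle in $P\setminus(J_1\cup J_2)$, so any argument that does not produce the specific separating curve built through the $C(\tau_1\tau_2)$-cylinder, together with the matching choice of $S(\tau_1\tau_2)$, misses the point of the lemma.
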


We will use the curve $\gamma$ to cut the manifold $M(\tau_1,\tau_2)$ in two pieces and apply Novikov-Wall.

\begin{proof}
The strategy is to build the curve $\gamma$ from four intervals $\gamma_1, \gamma_2,\gamma_3, \gamma_4$  alternatively contained in the pair of pants and in the central cylinder as in Figure \ref{M}. 
To do this we have to choose $S(\tau_1\tau_2)$ carefully. 

Let $F_1$ and $F_2$ indicate $C$-complexes for the coloured links $\widehat{\tau}_1$ and $\widehat{\tau}_2$, and construct a complex for $\widehat{\tau_1\tau_2}$ by connecting $F_1$ and $F_2$ along disjoint bands far from the tangles. 

\begin{figure}[H]
    \centering
    \includegraphics[width = 8 cm]{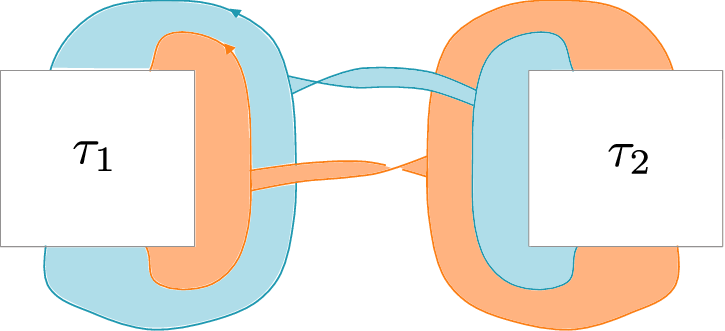}
    \caption{Construct a $C$-complex for $\widehat{\tau_1\tau_2}$ adding to $F_1$ and $F_2$ disjoint bands far from the tangles. }
\end{figure}
We can move this $C$-complex by an isotopy so that its boundary lies in $D^2 \times S^1 \subset S^3 = \partial D^4$ and then push its interior inside $D^4$ to obtain $S(\tau_1\tau_2)$ in such a way that there are two points $s_1, s_2 \in S^1$, such that the disks $D^2 \times \{s_i\} \subset D^2 \times S^1$ are far from the tangles and separate $\tau_1$ from $\tau_2$, and such that $S(\tau_1\tau_2)$ intersects $D^2 \times \{s_i\} \times [0,1]\subset N$ along $\{x_1,\ldots, x_n\} \times \{s_i\} \times [0,1]. $ 

Define $\gamma_1$ to be a curve in $P$ as in Figure \ref{pantalone} with endpoints $s_1$ and $s_2$, $\gamma_2$ to be the curve $\{s_1\} \times [0,1]$ in the cylinder $S^1 \times[0,1]$, $\gamma_3$ to be as $\gamma_1$ but in the other pair of pants and $\gamma_4$ to be the curve $\{s_2\} \times [0,1]$ in the same cylinder as $\gamma_2$. Gluing these intervals produces the required curve $\gamma$.

\begin{figure}[H]
    \centering
    \includegraphics[width = 4 cm]{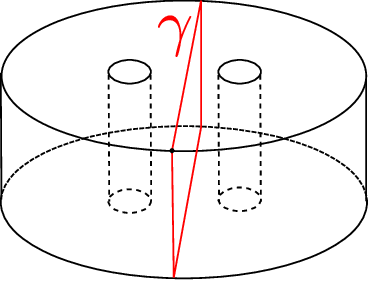}
    \caption{The curve $\gamma$ is contained in $\partial (P \times [0,1]) \cong S_2$.}
\end{figure}

\end{proof}

\begin{prop} For all $\omega \in T^\mu_*$ we have
$$\sign_{\omega}(M(\tau_1,\tau_2)) \simeq 0.$$
\end{prop}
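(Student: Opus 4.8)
The plan is to cut $M(\tau_1,\tau_2)$ in two along the hypersurface associated to the curve $\gamma$ of the previous lemma, apply the twisted Novikov-Wall theorem, and then show that each of the two resulting pieces has twisted signature bounded by a constant independent of $\tau_1$ and $\tau_2$. Combined with the previous proposition and with Proposition \ref{reduction}, this will give Theorem \ref{main}.

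Concretely, set $X_0 := (D^2\times\gamma)\cap M(\tau_1,\tau_2)$. By the choice of $\gamma$, the surface $D^2\times\gamma$ meets the removed surface $T(\tau_1,\tau_2)\cup S(\tau_1)\cup S(\tau_2)\cup S(\tau_1\tau_2)\cup T(\id_c,\id_c)$ in exactly the $n$ circles $\{x_1,\dots,x_n\}\times\gamma$, so $X_0\cong D_n\times S^1$ and $\partial X_0\cong \partial D_n\times S^1$, a manifold which does not depend on $\tau_1,\tau_2$. Since $\gamma$ separates the genus-$2$ surface $S_2$ into two once-punctured tori $S^-$ and $S^+$, the hypersurface $X_0$ separates $M(\tau_1,\tau_2)$ into $M_-\subset D^2\times S^-$ and $M_+\subset D^2\times S^+$. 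The twisted Novikov-Wall theorem then gives
$$\sign_\omega(M(\tau_1,\tau_2))=\sign_\omega(M_-)+\sign_\omega(M_+)+\Maslov(L_-,L_0,L_+),$$
where $L_\bullet$ is a subspace of $H_1(\partial X_0;\C^\omega)$; as $\partial X_0$ is fixed, the Maslov term is bounded uniformly in $\tau_1,\tau_2$, and we are reduced to showing $\sign_\omega(M_\pm)\simeq 0$.

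Fix $M_-$, the case of $M_+$ being symmetric. The arcs $\gamma_1$ and $\gamma_3$ cut the two pair-of-pants blocks $-P(\tau_1,\tau_2)$ and $P(\id_c,\id_c)$ of $M(\tau_1,\tau_2)$ into cylindrical pieces, each of which is diffeomorphic to a product $N\times[0,1]$ of a $3$-manifold with an interval (compare the proof of Proposition \ref{PMaslov}), hence of vanishing twisted signature. The block $C(\tau_1)$ is disjoint from $D^2\times\gamma$ and lies entirely in $M_-$. Finally, the arcs $\gamma_2$ and $\gamma_4$ cut $-C(\tau_1\tau_2)$ into a ``$\tau_1$-side'' and a ``$\tau_2$-side'': because the coloured surface $S(\tau_1\tau_2)$ was chosen to be obtained from $C$-complexes $F_1,F_2$ for $\widehat{\tau}_1,\widehat{\tau}_2$ by attaching bands inside the region swept out by $\gamma$, the $\tau_1$-side of $-C(\tau_1\tau_2)$ is, compatibly with the maps to $\Z^\mu$ defining the twisted coefficients, orientation-reversing diffeomorphic to $C(\tau_1)$. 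Decomposing $M_-$ by Novikov-Wall along these finitely many gluing $3$-manifolds, all of topological type independent of $\tau_1,\tau_2$ so that all Maslov corrections are bounded, the product pieces contribute $0$ and the two mutually mirror copies of $C(\tau_1)$ contribute $\sign_\omega(C(\tau_1))$ and $-\sign_\omega(C(\tau_1))$, which cancel. Hence $\sign_\omega(M_-)\simeq 0$, and likewise $\sign_\omega(M_+)\simeq 0$, so $\sign_\omega(M(\tau_1,\tau_2))\simeq 0$.

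Combining this with the previous proposition, which identifies $\sign_\omega(M(\tau_1,\tau_2))\simeq\sigma_\omega(\widehat{\tau_1\tau_2})-\sigma_\omega(\widehat{\tau}_1)-\sigma_\omega(\widehat{\tau}_2)-\Maslov(\F_\omega(\overline{\tau}_1),\F_\omega(\id_c),\F_\omega(\tau_2))$, and recalling that $\F_\omega(\id_c)$ is the diagonal $\Delta$ (since $j_0$ is the sum of the two inclusions of $D_n$ into $X_{\id_c}\simeq D_n$, both nullhomotopic to the identity), we obtain $\sigma_\omega(\widehat{\tau_1\tau_2})-\sigma_\omega(\widehat{\tau}_1)-\sigma_\omega(\widehat{\tau}_2)\simeq\Maslov(\F_\omega(\overline{\tau}_1),\Delta,\F_\omega(\tau_2))$, which by Proposition \ref{reduction} upgrades to the exact equality of Theorem \ref{main}. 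The step I expect to be the main obstacle is verifying rigorously that the $\tau_1$-side of $C(\tau_1\tau_2)$ is diffeomorphic to $C(\tau_1)$ in a way compatible with the twisting, which is precisely where the freedom in choosing the $C$-complex for $\widehat{\tau_1\tau_2}$ and the curve $\gamma$ must be exploited, together with the bookkeeping needed to guarantee that every auxiliary gluing hypersurface arising in the iterated use of Novikov-Wall has twisted first homology of dimension independent of $\tau_1$ and $\tau_2$.
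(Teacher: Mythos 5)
Your opening move is the same as the paper's: cut $M(\tau_1,\tau_2)$ along $(D^2\times\gamma)\cap M(\tau_1,\tau_2)=D_n\times\gamma$, whose boundary is a fixed union of tori (the tori around the missing surface have vanishing twisted homology as in Lemma \ref{zero}), so the Novikov--Wall correction is uniformly bounded and the problem reduces to the two halves. The gap is in how you dispose of the halves. You cancel $\sign_\omega(C(\tau_1))$ against the ``$\tau_1$-side'' of $-C(\tau_1\tau_2)$ by asserting that the latter is orientation-reversingly diffeomorphic to $C(\tau_1)$ compatibly with the coefficient system. This is not true as stated, and no choice of $S(\tau_1\tau_2)$ or of $\gamma$ makes it true: $C(\tau_1)$ is the exterior, inside $N\cong T\times[0,1]$ (solid torus times interval), of a surface whose outer boundary is the \emph{closed} link $\widehat{\tau}_1$ and whose inner boundary is $\widehat{\id}_c$, whereas the $\tau_1$-side of $C(\tau_1\tau_2)$ lives in $D^2\times I\times[0,1]$ (the solid torus has been cut open along the two disks $D^2\times\{s_i\}$) and the removed surface there meets the outer wall in the open tangle $\tau_1$ plus trivial arcs, not in a closed link; the ambient manifolds and the boundaries are different. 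So the cancellation $\sign_\omega(C(\tau_1))-\sign_\omega(C(\tau_1))=0$ is exactly the unproved step -- the one you yourself flag as the main obstacle -- and as written the argument does not go through. (A smaller point: the corner surfaces in your iterated Novikov--Wall applications, e.g.\ $\partial\mathcal{N}(\widehat{\tau}_1)$, do depend on the tangles; their contribution is controlled only via Lemma \ref{zero}, not because they are ``of topological type independent of $\tau_1,\tau_2$''.)

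The paper avoids any identification of pieces of $C(\tau_1\tau_2)$ with $C(\tau_1)$ by a bootstrap. Writing $M(\tau_1,\tau_2)=Q(\tau_1)\cup_{D_n\times\gamma}Q(\tau_2)$, where $Q(\tau_i)$ depends only on $\tau_i$, the first cut gives $\sign_\omega(M(\tau_1,\tau_2))\simeq\sign_\omega(Q(\tau_1))+\sign_\omega(Q(\tau_2))$. Now apply the \emph{previous} proposition to the pair $(\tau,\id_c)$: since $\widehat{\tau\,\id_c}=\widehat{\tau}$, $\sigma_\omega(\widehat{\id}_c)=0$ and $\Maslov(\F_\omega(\overline{\tau}),\Delta,\Delta)=0$ by Remark \ref{propertymaslov}, one gets $\sign_\omega(M(\tau,\id_c))\simeq 0$, while the same cut gives $\sign_\omega(M(\tau,\id_c))\simeq\sign_\omega(Q(\tau))+\sign_\omega(Q(\id_c))$. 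Taking $\tau=\id_c$ yields $\sign_\omega(Q(\id_c))\simeq 0$, and then $\sign_\omega(Q(\tau))\simeq 0$ for every $\tau$, which gives the proposition. If you wish to keep your more ``geometric'' route, you would need to replace the claimed diffeomorphism by an actual comparison (for instance a further Novikov--Wall argument relating the $\tau_1$-side of $C(\tau_1\tau_2)$ to $C(\tau_1)$ with uniformly bounded correction terms), which is substantially more work than the paper's two-line bootstrap.
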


\begin{proof}
Cut the manifold $M(\tau_1, \tau_2)$ along $(D^2 \times \gamma) \cap M(\tau_1, \tau_2)= D_n \times \gamma$.
We obtain two $4$-manifolds: $ Q(\tau_1)$ that only depends on $\tau_1$ and $ Q(\tau_2)$ that only depends on $\tau_2$, with
$$M(\tau_1, \tau_2)= Q(\tau_1) \cup _{D_n \times \gamma} Q(\tau_2). $$
With a similar argument as in Lemma \ref{zero}, since $ \partial (D_n \times \gamma)$ is the union of $\partial D^2 \times \gamma$ and some tori whose meridian is also the meridian of a component of the ``missing surface'' $T(\tau_1,\tau_2)$, we can prove that $H_1(\partial (D_n \times \gamma); \C^\omega) = H_1(\partial D^2 \times \gamma; \C^\omega) $, hence the corrective Maslov index term is bounded by the dimension of $H_1(\partial D^2 \times \gamma; \C^\omega),$ which does not depend on the tangles. Therefore
$$\sign_{\omega}(M(\tau_1,\tau_2)) \simeq \sign_{\omega}(Q(\tau_1))+ \sign_{\omega}(Q(\tau_2)). $$
It remains to show that $ \sign_{\omega}(Q(\tau))\simeq 0.$
Thanks to Remark \ref{propertymaslov} we have

$$\sign_{\omega}(Q(\tau))+ \sign_{\omega}(Q(\id_c))\simeq \sign_\omega(M(\tau, \id_c)) \simeq $$ $$\simeq \sigma_\omega(\widehat{\tau})- \sigma_\omega(\widehat{\tau}) - \sigma_\omega(\widehat{\id}_c) - \Maslov( \F_\omega(\overline{\tau}), \Delta, \Delta) = 0 $$
independently of $\tau$. By taking $\tau= \id_c$ we get $ \sign_{\omega}(Q(\id_c))\simeq 0$ and hence $$\sign_{\omega}(Q(\tau)) \simeq 0. $$

This proves the proposition.

\end{proof}
As a consequence
$$  \sigma_{\omega}(\widehat{\tau_1\tau_2}) -  \sigma_{\omega}(\widehat{\tau}_1)- \sigma_{\omega}(\widehat{\tau}_2) -\Maslov(\F_\omega(\overline{\tau}_1), \F_\omega(\id_c), \F_\omega(\tau_2))\simeq \sign_\omega(M(\tau_1,\tau_2)) \simeq 0 $$
for all $(c,c)$-tangles $\tau_1,\tau_2$.

Let $A_j$ be the set of $k \in \{1,\ldots, n\}$ such that $c(k)= \pm j.$
Whenever $\omega\in T^\mu_*$ is such that $I_c(\omega)=  \prod_{j=1}^\mu \omega_j^{i_j} \neq 1 $, where $$i_j= \sum_{k\in A_j} \sgn c(k),$$ we can apply the reduction (Proposition \ref{reduction}) and deduce

$$\sigma_{\omega}(\widehat{\tau_1\tau_2}) -  \sigma_{\omega}(\widehat{\tau}_1)- \sigma_{\omega}(\widehat{\tau}_2)  = \Maslov(\F_\omega(\overline{\tau}_1), \F_\omega(\id_c), \F_\omega(\tau_2)). $$

\subsection{Connections with the formula of Gambaudo and Ghys }
\label{finito}
In the coloured setting there is a generalization of the reduced Burau representation for groups of coloured braids, called reduced coloured Gassner representation. By means of the coloured representation, we are able to state Theorem \ref{main} for coloured braids in a way that clearly generalizes the formula of Gambaudo and Ghys.

\subsubsection{The reduced coloured Gassner representation}
Let $c: \{ 1,\ldots,n\} \to \{\pm 1, \ldots, \pm \mu \}$ be an element of $\Tangles_\mu$. Let $B_c$ denote the group of coloured braids that induce the colouring $c$ on both $D^2 \times\{0\}$ and $D^2 \times\{1\}$, i.e. they are automorphisms of $c$. Let us call $D_n$ the $n$-punctured disk of which we labeled the $j$-th puncture with $c(j)$. The fundamental group $\pi_1(D_n)$ is generated by $x_1,\ldots,x_n$, where $x_j$ is a loop winding one time around the $j$-th puncture counterclockwise when $c(j)$ is positive, and clockwise otherwise. Consider the map
\begin{align*}
    \psi_c: \pi_1(D_n) &\to \Z^\mu = \langle t_1,\ldots, t_\mu \rangle \\
    x_j & \mapsto t_{|c(j)|}
\end{align*}
and let $D_n^\infty \to D_n$ be the regular covering space associated to $\ker(\psi_c)$. The homology groups of $D_n^\infty$ are naturally left modules over $\Lambda_\mu \coloneqq \Z[t^{\pm 1}_1 , \ldots, t^{\pm 1}_\mu]$, in particular we are interested in $H_1(D_n^\infty)$, which however might not be free. Since $D_n$ is homotopically equivalent to the wedge sum of $n$ circles,  one can easily check that
$H_1(D_n^\infty) $ is generated by elements of the form $ (1-t_{|c(i)|})\widetilde{x}_j - (1-t_{|c(j)|})\widetilde{x}_i$ when $c(i) \neq c(j) $ and $\widetilde{x}_j - \widetilde{x}_i $ when $c(i) = c(j) $. 

Fortunately, by means of homology with twisted coefficients, we are able to deal with the existence of torsion.
Let $p:\widetilde{D}_n \to D_n$ be the universal cover of $D_n$. Then $C_*(\widetilde{D}_n)$ is a chain complex of free left $\Z[\pi_1(D_n)]$-modules. The homomorphism $\psi_c$ endows $\Lambda_\mu$ with a right $\Z[\pi_1(D_n)] $-module structure. Consider the twisted homology module $H_1(D_n;\Lambda_\mu)$ and recall that it is isomorphic to $H_1(D_n^\infty)$ (see for example Lemma 5.2.1 \cite{ConwayPhD}). 
Let $S$ be the multiplicative subset of $\Lambda_\mu$ generated by $(1-t_1),\ldots, (1-t_\mu)$ and call $\Lambda_S$ the localization of $\Lambda_\mu$ with respect to $S$. Since localizations are flat, $H_*(D_n; \Lambda_S)$ is isomorphic to $\Lambda_S \otimes_{\Lambda_\mu} H_*(D_n; \Lambda_\mu)$ (it is a consequence of the existence of universal coefficients spectral sequences for twisted coefficients, \cite{ConwayPhD} for details).

The proof of the following can be found in \cite{ConwayPhD}.
\begin{prop}
The $\Lambda_S$-module $H_1(D_n; \Lambda_S)$ is free of rank $n-1$.
\end{prop}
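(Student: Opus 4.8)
The plan is to read off $H_1(D_n;\Lambda_S)$ from an explicit chain complex. Since $D_n$ is homotopy equivalent to a wedge of $n$ circles, give it the CW structure with one $0$-cell and $n$ oriented $1$-cells, the latter corresponding to the loops $x_1,\dots,x_n$ generating $\pi_1(D_n)$. Lifting these cells to the universal cover $\widetilde D_n$ yields the free $\Z[\pi_1(D_n)]$-complex $0\to \Z[\pi_1(D_n)]^n \xrightarrow{\ \partial\ } \Z[\pi_1(D_n)]\to 0$, where $\partial$ sends the lift of the $j$-th $1$-cell to $(x_j-1)$ times the lift of the $0$-cell. Applying $\Lambda_\mu\otimes_{\Z[\pi_1(D_n)]}(-)$ through $\psi_c$ and then localizing at $S$ — which is flat, as recalled just above, so that the localized complex still computes $H_*(D_n;\Lambda_S)$ — turns this into
$$0 \longrightarrow \Lambda_S^{\,n} \xrightarrow{\ \partial\ } \Lambda_S \longrightarrow 0, \qquad \partial(e_j)=t_{|c(j)|}-1.$$

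The key point is that every coefficient $t_{|c(j)|}-1$ is a \emph{unit} in $\Lambda_S$: indeed $1-t_{|c(j)|}$ is, up to sign, one of the generators $1-t_1,\dots,1-t_\mu$ of the multiplicative set $S$, hence invertible in the localization. Therefore $\partial$ is surjective, so $H_0(D_n;\Lambda_S)=0$ and $H_1(D_n;\Lambda_S)=\ker\partial$. It remains to produce a free basis of $\ker\partial$. Write $a_j=t_{|c(j)|}-1$ (so $a_1$ is a unit) and set $f_j:=e_j-a_1^{-1}a_j\,e_1$ for $j=2,\dots,n$; each $f_j$ lies in $\ker\partial$ because $\partial(f_j)=a_j-a_1^{-1}a_j\,a_1=0$. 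The change of basis $(e_1,e_2,\dots,e_n)\mapsto(e_1,f_2,\dots,f_n)$ is invertible over $\Lambda_S$, and if $c_1e_1+\sum_{j\ge 2}c_jf_j\in\ker\partial$ then applying $\partial$ gives $c_1a_1=0$, hence $c_1=0$. Thus $f_2,\dots,f_n$ form a $\Lambda_S$-basis of $\ker\partial$, and $H_1(D_n;\Lambda_S)$ is free of rank $n-1$.

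I do not expect a genuine obstacle here: the only steps needing care are writing down the twisted differential with the correct conventions for the generators $x_j$ (matching $\psi_c(x_j)=t_{|c(j)|}$) and invoking flatness of localization to pass from $\Lambda_\mu$- to $\Lambda_S$-coefficients, both routine. Once the boundary coefficients are recognized as units, everything reduces to elementary linear algebra over $\Lambda_S$. One could alternatively argue that $\ker\partial$ is stably free of rank $n-1$ by splitting the surjection $\Lambda_S^{\,n}\twoheadrightarrow\Lambda_S$, but exhibiting the explicit basis $f_2,\dots,f_n$ makes freeness immediate and avoids appealing to any cancellation theorem over $\Lambda_S$.
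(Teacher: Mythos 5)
Your proof is correct, and it is essentially the standard argument: the paper itself does not prove this proposition but defers it to Conway's thesis \cite{ConwayPhD}, where the same computation (the twisted cellular complex of the wedge of $n$ circles, whose boundary coefficients $t_{|c(j)|}-1$ become units after localizing at $S$) is what underlies the statement. The only cosmetic remark is that you do not even need flatness for your identification: since $\Lambda_S\otimes_{\Z[\pi_1(D_n)]}C_*(\widetilde D_n)=\Lambda_S\otimes_{\Lambda_\mu}\bigl(\Lambda_\mu\otimes_{\Z[\pi_1(D_n)]}C_*(\widetilde D_n)\bigr)$, the localized complex computes $H_*(D_n;\Lambda_S)$ by definition, and your explicit basis $f_2,\dots,f_n$ of $\ker\partial$ then gives freeness of rank $n-1$ directly.
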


Let $h_\alpha$ be the homeomorphism $D_n \to D_n$ that represents the braid $\alpha$. Then $h_\alpha$ lifts to an automorphism $\widetilde{h}_\alpha : H_1(D_n^\infty) \to H_1(D_n^\infty)$ of $\Lambda_\mu$-modules.

\begin{dfn}
The reduced coloured Gassner representation is the representation $$ \Burau_{(t_1,\ldots,t_\mu)}: B_c \to \Aut_{\Lambda_\mu}(H_1(D_n^{\infty})) $$
which sends $h_\alpha$ to the induced automorphism $\widetilde{h}_\alpha: H_1(D_n^{\infty}) \to H_1(D_n^{\infty}) $. \newline
The localized reduced coloured Gassner representation is the representation
$$ \Burau_{(t_1,\ldots,t_\mu)}^{\text{loc}}: B_c \to \Aut_{\Lambda_S}(\Lambda_S \otimes_{\Lambda_\mu} H_1(D_n^{\infty})) $$
which sends $h_\alpha$ to the induced automorphism $$\id \otimes \widetilde{h}_\alpha: \Lambda_S \otimes_{\Lambda_\mu} H_1(D_n^{\infty}) \to  \Lambda_S \otimes_{\Lambda_\mu} H_1(D_n^{\infty}). $$
\end{dfn}

\begin{rmk}
Given a $(c,c)$-braid $\alpha$, the exterior of the braid deformation retracts on $D_n$. In the notation of the previous section, if $i_0$ is the inclusion of $D_n \times \{0\}$ in  $X_\alpha$ and $ i_1$ is the inclusion of $D_n \times \{1\}$, then the kernel of
$$ k_\alpha= (i_1)_*-(i_0)_*: H_1(D_n;\Lambda_\mu) \oplus H_1 (D_n;\Lambda_\mu)  \to H_*(X_\alpha;\Lambda_\mu) $$
is exactly the graph of the reduced coloured Gassner representation (note that $D_n$ is a deformation retract of $X_\alpha$). The same result holds for $\Lambda_S$-coefficients.
\end{rmk}

Recall that $\mathscr{F}_\omega(\tau)= \ker (j_\tau)$, where 
$$j_\tau: H_1(D_n;\C^{\psi_c, \omega}) \oplus H_1 (D_{n'};\C^{\psi_{c'}, \omega})  \to H_1(X_\tau;\C^\omega), $$
$j_\tau(x,x') = i_{c'}(x')-i_c(x). $
As a consequence of the universal coefficient spectral sequence (see \cite{ConwayPhD}, Proposition 7.5.3), for every CW complex $X$ with a homomorphism $\pi_1(X) \to \Z^\mu= \langle t_1,\ldots,t_\mu \rangle$ such that at least one of the $t_i$ is in its range, for every $\omega \in T^\mu_*$
$$H_1(X; \C^\omega) \cong \C^\omega \otimes_{\Lambda_S} H_1(X; \Lambda_S). $$
Therefore $j_\alpha = \id_{\C^\omega} \otimes_{\Lambda_S} (\id_{\Lambda_S} \otimes_{\Lambda_\mu} k_\alpha)$ for every coloured braid $\alpha$.
We would like to show that $\ker (j_\alpha) = \C^\omega \otimes_{\Lambda_S} \ker (\id_{\Lambda_S} \otimes_{\Lambda_\mu} k_\alpha)$ for certain $\omega\in T^\mu_*$. 

Recall that a submodule of a skew-Hermitian $\Lambda$-module $H$ is called Lagrangian if it is equal to its annihilator.
\begin{lem}
For $\tau$ a $(c,c')$-tangle and for $\omega$ such that $I_c(\omega)$ and $ I_{c'}(\omega)$ are not equal to $1$, the subspace $\mathscr{F}_\omega(\tau) $ is Lagrangian.
\end{lem}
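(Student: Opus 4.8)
The plan is to establish that $\mathscr{F}_\omega(\tau)$ is not merely totally isotropic (which was already shown in Theorem \ref{functor}) but a full Lagrangian, i.e. that it equals its own annihilator inside $(-H_1(D_n;\C^{\psi_c,\omega}))\oplus H_1(D_{n'};\C^{\psi_{c'},\omega})$. Since the inclusion $\mathscr{F}_\omega(\tau)\subset\Ann(\mathscr{F}_\omega(\tau))$ is known, it suffices to compare dimensions, and the natural way to do this is a rank count over the half-integral domain $\Lambda_S$ followed by a base change to $\C^\omega$. First I would recall from the preceding remarks that $j_\tau = \id_{\C^\omega}\otimes_{\Lambda_S}(\id_{\Lambda_S}\otimes_{\Lambda_\mu} k_\tau)$, so that $\mathscr{F}_\omega(\tau)$ is obtained from $\ker(\id_{\Lambda_S}\otimes k_\tau)$ by tensoring with $\C^\omega$, provided the tensor product is exact enough on the relevant kernels — this is where the hypotheses $I_c(\omega)\neq 1$ and $I_{c'}(\omega)\neq 1$ enter, because Lemma \ref{nondeg} then guarantees $\lambda_{c,\omega}(D_n)$ and $\lambda_{c',\omega}(D_{n'})$ are non-degenerate, so there is no collapse coming from the boundary terms.

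The key steps, in order, are: (1) identify $\Ann(\mathscr{F}_\omega(\tau))$ with the image of $H_1(X_\tau;\C^\omega)$ under the adjoint of $j_\tau$, i.e. show that the skew-Hermitian form on $(-H_1(D_n))\oplus H_1(D_{n'})$ pairs $\ker j_\tau$ trivially with a class exactly when that class lies in the image of $H_1(\partial X_\tau;\C^\omega)\to H_1(X_\tau;\C^\omega)$ pushed back appropriately; concretely this is the ``half lives, half dies'' duality for the compact $3$-manifold $X_\tau$ already invoked in the proof of Theorem \ref{functor}. (2) Apply the Poincaré–Lefschetz duality statement (the kernel of $H_{k}(\partial X;\C^\omega)\to H_k(X;\C^\omega)$ is Lagrangian in $H_k(\partial X;\C^\omega)$ for $X$ a compact $(2k+1)$-manifold) to $\Sigma=\partial X_\tau$, exactly as in Theorem \ref{functor}; this gives $\ker(\varphi\circ k)$ is Lagrangian in $H_1(\Sigma;\C^\omega)$. (3) Translate this Lagrangian condition through the isomorphism of skew-Hermitian spaces $H_1(\Sigma;\C^\omega)\cong (-H_1(D_n;\C^{\psi_c,\omega}))\oplus H_1(D_{n'};\C^{\psi_{c'},\omega})$, using that in our range of $\omega$ the boundary pieces $\partial D_n\setminus\partial D^2$ and $\partial D_{n'}\setminus\partial D^2$ and the horizontal part $\partial_v\mathcal N(\tau)$ all have vanishing twisted homology (shown inside the proof of Lemma \ref{nondeg}), so a Mayer–Vietoris computation of $H_1(\Sigma;\C^\omega)$ contributes nothing beyond the two copies of punctured-disk homology, and the form matches $-\lambda\oplus\lambda'$. (4) Conclude that $\mathscr{F}_\omega(\tau)=\psi(\ker(\varphi\circ k))$ is the image of a Lagrangian under the unitary involution $\psi=(-\id)\oplus\id$, hence itself Lagrangian.

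The main obstacle will be step (3): verifying carefully that the isomorphism $H_1(\Sigma;\C^\omega)\cong (-H_1(D_n;\C^{\psi_c,\omega}))\oplus H_1(D_{n'};\C^{\psi_{c'},\omega})$ is an isomorphism \emph{of skew-Hermitian modules} with no residual degenerate summand. This is precisely where one needs both $I_c(\omega)\neq 1$ and $I_{c'}(\omega)\neq 1$: if either fails, the outer boundary circle $S_{n+1}$ of the corresponding punctured disk contributes a nonzero $H_1$, the intersection form on $D_n$ picks up a radical (Lemma \ref{nondeg}), and $\mathscr{F}_\omega(\tau)$ drops below half-dimension, so it is totally isotropic but not Lagrangian. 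I would therefore spend the bulk of the argument on the Mayer–Vietoris decomposition of $\Sigma=\partial X_\tau$, showing that $\Sigma$ is built from $-D_n$, $D_{n'}$, the vertical tube boundary $\partial_v\mathcal N(\tau)$, and the boundary tori of $\mathcal N(\tau)$, that the last two have trivial $\C^\omega$-homology by the torus computation of Lemma \ref{zero} and the circle computation of Lemma \ref{nondeg}, and that the gluing loci are contractible or homologically trivial, so the Hermitian structures add up with the signs dictated by the orientation-preserving inclusions $i:-D_n\to\Sigma$, $i':D_{n'}\to\Sigma$ recorded in the proof of Theorem \ref{functor}. Once the skew-Hermitian isomorphism is in hand, the Lagrangian property is immediate from the Poincaré duality input and the fact that $\psi$ is unitary.
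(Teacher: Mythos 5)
Your proposal is correct and takes essentially the same route as the paper: reduce via the unitary involution $\psi=(-\id)\oplus\id$ to showing $\ker(\varphi\circ k)$ is Lagrangian, get $\ker\varphi$ Lagrangian in $H_1(\partial X_\tau;\C^\omega)$ from Poincar\'e duality, and use a Mayer--Vietoris argument with the vanishing twisted homology of the remaining annular/toral boundary pieces (which is exactly where $I_c(\omega)\neq 1$ and $I_{c'}(\omega)\neq 1$ enter) to see that $k$ is a form-preserving isomorphism onto $H_1(\partial X_\tau;\C^\omega)$. Note that the rank count over $\Lambda_S$ announced in your opening paragraph is never actually needed: once $k$ is an isomorphism of skew-Hermitian spaces, the Lagrangian property transports directly, which is precisely the paper's argument.
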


\begin{proof}
With the notation of the proof of Theorem \ref{functor} we just need to show that $\ker(\varphi \circ k)$ is Lagrangian, where $k$ is the map induced by the inclusion of $D_n \times \{ 0 \} \sqcup D_n \times \{ 1 \}$ in $\partial X_\tau$ and $\varphi$ is the map induced by the inclusion of the boundary in $X_\tau$. The subspace $\ker \varphi$ is Lagrangian because it is the kernel of the inclusion of the boundary in a 3-manifold (this is a standard consequence of the Poincaré duality).
 Using the fact that $\partial X_\tau \setminus( D_n \sqcup D_n')$ is a union of cylinders and tori with vanishing twisted homology modules (similarly to the proof of Lemma \ref{nondeg} and Lemma \ref{zero}), a standard Mayer-Vietoris argument shows that $k$ is an isomorphism. This concludes the proof.

\end{proof}

Since the form on $(-H_1(D_n;\C^{\psi_c, \omega})) \oplus H_1 (D_{n'};\C^{\psi_{c'}, \omega}) $ is skew-Hermitian and non-degenerate the dimension of $\mathscr{F}_\omega(\tau)  $ is half the dimension of $$ H_1(D_n;\C^{\psi_c, \omega}) \oplus H_1 (D_{n'};\C^{\psi_{c'}, \omega})$$ which is equal to $ (n-1)+(n'-1). $ 

In \cite{CimasoniTuraev} Cimasoni and Turaev show that the kernel of the map $k_\alpha$ is Lagrangian in the $\Lambda_\mu$-module $H_1(D_n;\Lambda_\mu) \oplus H_1 (D_n;\Lambda_\mu)$, which is endowed with a non degenerate skew-Hermitian form, and it can be seen that its localization is free over $\Lambda_S$ and therefore the localization has rank $n-1$. As a consequence, since obviously $$ \ker (j_\alpha) \supset \C^\omega \otimes_{\Lambda_S} \ker (\id_{\Lambda_S} \otimes_{\Lambda_\mu} k_\alpha),$$ and the two spaces have the same complex dimension, they are equal.

As a consequence, $\mathscr{F}_\omega(\alpha) $ is equal to $\Gamma_{\Burau_{\omega}(\alpha)}$ the graph of the reduced coloured Gassner representation tensorized by $\C^\omega$ and therefore it can be interpreted as the graph of the evaluation in $t=\omega$ of a matrix representing the localized coloured Gassner representation. 

The \emph{Meyer cocycle} of two unitary automorphisms $\gamma_1,\gamma_2$ of a skew-Hermitian complex vector space can be defined considering the space
$$E_{\gamma_1,\gamma_2} = \Imm(\gamma_1^{-1} -\id) \cap \Imm( \id - \gamma_2) $$
and defining on it a Hermitian form
$$b: E_{\gamma_1,\gamma_2} \times E_{\gamma_1,\gamma_2} \to \C$$
by setting $b(x,y) = \lambda (x_1+x_2, y)$ for $x= \gamma_1^{-1}(x_1)-x_1=x_2-\gamma_2(x_2) \in E_{\gamma_1,\gamma_2}.$
Then $\Meyer(\gamma_1, \gamma_2)$ is defined as the signature of $b$.

The following lemma is easy to prove:
\begin{lem}
\label{Meyer}
$$\Meyer(\gamma_1,\gamma_2)= - \Maslov(\Gamma_{\gamma_1^{-1}}, \Gamma_{\id}, \Gamma_{\gamma_2}) $$
where $\Gamma_\gamma$ indicates the graph of $\gamma$.
\end{lem}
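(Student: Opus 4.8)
The plan is to unwind both sides of the claimed identity into signatures of explicit Hermitian forms on the same complex vector space, and then match them directly. Fix a skew-Hermitian complex vector space $(H,\lambda)$ and unitary automorphisms $\gamma_1,\gamma_2$ of $H$. The right-hand side is the Maslov index of the three Lagrangians $\Gamma_{\gamma_1^{-1}}$, $\Gamma_{\id}=\Delta$ and $\Gamma_{\gamma_2}$ inside $(-H)\oplus H$, where recall that a graph $\Gamma_\gamma$ is isotropic precisely because $\gamma$ is unitary. By the very definition of the Maslov index, $\Maslov(\Gamma_{\gamma_1^{-1}},\Delta,\Gamma_{\gamma_2})$ is the signature of the Hermitian form on $(\Gamma_{\gamma_1^{-1}}+\Delta)\cap\Gamma_{\gamma_2}$ obtained by writing an element of that space as a sum of a piece in $\Gamma_{\gamma_1^{-1}}$ and a piece in $\Delta$, and pairing the $\Delta$-piece (via $(-\lambda)\oplus\lambda$) against the whole element.

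The key computational step is to make this space and form completely explicit. An element of $\Gamma_{\gamma_2}$ has the form $(z,\gamma_2 z)$; requiring it to lie in $\Gamma_{\gamma_1^{-1}}+\Delta$ means there exist $u,v\in H$ with $(z,\gamma_2 z)=(\gamma_1^{-1}u,u)+(v,v)$, i.e. $z=\gamma_1^{-1}u+v$ and $\gamma_2 z=u+v$, whence $u-\gamma_1^{-1}u=\gamma_2 z-z=-(z-\gamma_2 z)$. Thus the "$z$-coordinate" satisfies $z-\gamma_2 z\in\Imm(\gamma_1^{-1}-\id)$, while trivially $z-\gamma_2 z\in\Imm(\id-\gamma_2)$; so the map $(z,\gamma_2 z)\mapsto z-\gamma_2 z$ lands in $E_{\gamma_1,\gamma_2}=\Imm(\gamma_1^{-1}-\id)\cap\Imm(\id-\gamma_2)$. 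One checks this map is onto $E_{\gamma_1,\gamma_2}$ and computes its kernel (elements with $z$ a common fixed vector of $\gamma_1^{-1}$ and $\gamma_2$, on which $\lambda$ — hence the Maslov form — vanishes), so it induces an isomorphism from the radical-reduction of the Maslov form to $E_{\gamma_1,\gamma_2}$. Tracking the forms through this identification: the $\Delta$-piece of $(z,\gamma_2 z)$ is $(v,v)$ with $v=z-\gamma_1^{-1}u$, and pairing it against $(z,\gamma_2 z)$ under $(-\lambda)\oplus\lambda$ gives $-\lambda(v,z)+\lambda(v,\gamma_2 z)=\lambda(v,\gamma_2 z - z)$; rewriting $v$ and $\gamma_2 z - z$ in terms of the Meyer data $x_1 := \gamma_1^{-1}u-u$, $x_2 := z-\gamma_2 z$ (with $x=x_2-\gamma_2 x_2$ playing no role — rather $x:=z-\gamma_2 z$ decomposed as the Meyer definition dictates) shows this equals exactly $-b(\,\cdot\,,\,\cdot\,)$, the negative of Meyer's Hermitian form. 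Hence the Maslov signature equals $-\sgn(b)=-\Meyer(\gamma_1,\gamma_2)$, which is the assertion.

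The main obstacle is the bookkeeping in the last matching step: one must be careful about which summand is extracted, about the signs coming from the $-\lambda$ on the first factor of $(-H)\oplus H$, and about reconciling the two a priori different ways of writing an element of $E_{\gamma_1,\gamma_2}$ (as $\gamma_1^{-1}(x_1)-x_1$ versus $x_2-\gamma_2(x_2)$) with the decomposition $a=a_1+a_2$ used in the Maslov form. Everything else — that the map to $E_{\gamma_1,\gamma_2}$ is well-defined and surjective, that its kernel is $\lambda$-isotropic and in fact lies in the radical of the Maslov form, and that forms descend compatibly — is routine linear algebra of the same flavor as the proof of Theorem \ref{functor}, and I would present it as a short explicit verification rather than invoking any external input.
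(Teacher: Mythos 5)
Your overall strategy is the right one (and, for what it is worth, the paper itself gives no argument here: the lemma is stated as ``easy to prove'', so the comparison is purely on correctness): identify $(\Gamma_{\gamma_1^{-1}}+\Delta)\cap\Gamma_{\gamma_2}$ with $E_{\gamma_1,\gamma_2}$ via $(z,\gamma_2 z)\mapsto z-\gamma_2 z$, check surjectivity and that the kernel lies in the radical, and match Wall's form with $-b$. However, the bookkeeping step that you yourself single out as the main obstacle is wrong as written. You parametrize $\Gamma_{\gamma_1^{-1}}$ by $(\gamma_1^{-1}u,u)$; with the convention $\Gamma_\gamma=\{(h,\gamma h)\}$ used in the paper, that set is the graph of $\gamma_1$, not of $\gamma_1^{-1}$. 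Carrying your equations through, the $\Delta$-component of $(z,\gamma_2 z)$ is $v=z-\gamma_1^{-1}u$, while admissible Meyer data for $x=z-\gamma_2 z$ are the vectors $x_1=u$, $x_2=z$ (your ``$x_1:=\gamma_1^{-1}u-u$, $x_2:=z-\gamma_2 z$'' conflates these with elements of $E_{\gamma_1,\gamma_2}$, which is part of the confusion). Hence the Wall form is $-\lambda(x_2-\gamma_1^{-1}x_1,x')$, which differs from $-b(x,x')=-\lambda(x_1+x_2,x')$ by $\lambda(\gamma_1^{-1}x_1+x_1,x')$, and this does not vanish in general; the claimed identity $f=-b$ fails for the triple you actually wrote down. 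Concretely, take $H=\C$ with $\lambda(a,b)=ia\bar b$ and $\gamma_1=\gamma_2=i$: then $\Meyer(\gamma_1,\gamma_2)=-1$ and $\Maslov(\Gamma_{\gamma_1^{-1}},\Delta,\Gamma_{\gamma_2})=+1$, consistent with the lemma, but your $L_1$ is $\Gamma_{\gamma_1}=\Gamma_{\gamma_2}$ and the corresponding Maslov index is $0$.

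The fix is simply to use the consistent parametrization $\Gamma_{\gamma_1^{-1}}=\{(u,\gamma_1^{-1}u)\}$. Then $z=u+v$ and $\gamma_2 z=\gamma_1^{-1}u+v$, so $x:=z-\gamma_2 z=(\gamma_1^{-1}-\id)(-u)=(\id-\gamma_2)z$, admissible Meyer data are $x_1=-u$, $x_2=z$, and the $\Delta$-component is exactly $v=z-u=x_1+x_2$. Consequently $f(a,b)=((-\lambda)\oplus\lambda)\bigl((v,v),(z',\gamma_2 z')\bigr)=-\lambda(v,z'-\gamma_2 z')=-\lambda(x_1+x_2,x')=-b(x,x')$ with no leftover term (well-definedness of $b$ absorbs the choice of $x_1,x_2$). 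Surjectivity of $a\mapsto x$ and the radical statement then go through exactly as you outline, giving $\Maslov(\Gamma_{\gamma_1^{-1}},\Delta,\Gamma_{\gamma_2})=\sign(-b)=-\Meyer(\gamma_1,\gamma_2)$, which is the lemma. So: correct plan, but the sign/graph convention must be straightened out, since with your parametrization the crucial cancellation genuinely fails rather than being a notational nuisance.
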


Now Lemma \ref{Meyer} allows us to conclude:

\begin{thm}
For every $\alpha, \beta \in B_c$ coloured braids and for every $\omega \in T^\mu_*$ such that $I_c(\omega) \neq 1$
\begin{equation*} 
\sigma_\omega(\widehat{\alpha \beta}) -\sigma_\omega(\widehat{\alpha})-\sigma_\omega(\widehat{\beta}) = -\Meyer(\Burau_{\omega}(\alpha), \Burau_{\omega}(\beta)).
\end{equation*}
\end{thm}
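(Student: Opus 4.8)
The plan is to deduce this statement directly from Theorem \ref{main} by specializing to braids and then rewriting the resulting Maslov index as a Meyer cocycle via Lemma \ref{Meyer}. First I would apply Theorem \ref{main} to the $(c,c)$-tangles $\tau_1=\alpha$ and $\tau_2=\beta$. For braids the quantity $i_j=\sum_{k:\,c(k)=\pm j}\sgn c(k)$ is the same on the source and the target, so the restriction $\prod_{j=1}^\mu\omega_j^{i_j}\neq 1$ appearing in Theorem \ref{main} is exactly the hypothesis $I_c(\omega)\neq 1$. Hence Theorem \ref{main} gives
\[
\sigma_\omega(\widehat{\alpha\beta})-\sigma_\omega(\widehat{\alpha})-\sigma_\omega(\widehat{\beta})=\Maslov\bigl(\F_\omega(\overline{\alpha}),\Delta,\F_\omega(\beta)\bigr),
\]
where $\Delta$ is the diagonal of $H:=H_1(D_n;\C^{\psi_c,\omega})$, which by Lemma \ref{nondeg} is a non-degenerate skew-Hermitian space.

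Next I would invoke the identification established just above in this subsection: for every coloured braid $\gamma\in B_c$ the isotropic relation $\F_\omega(\gamma)$ is the graph $\Gamma_{\Burau_\omega(\gamma)}$ of the evaluation at $t=\omega$ of the localized reduced coloured Gassner representation, a unitary automorphism of $(H,\langle\,,\rangle_c)$, and moreover $\Delta=\Gamma_{\id}$. Applying this to $\gamma=\beta$ yields $\F_\omega(\beta)=\Gamma_{\Burau_\omega(\beta)}$. For $\F_\omega(\overline{\alpha})$ I would note that $\overline{\alpha}$ is again a $(c,c)$-braid and that the bar operation on braids coincides with taking the inverse in $B_c$: stacking the horizontal reflection of $\alpha$ (with reversed orientations) on top of $\alpha$ is isotopic to $\id_c$. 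This can also be read off from the functoriality of $\F_\omega$, since $\F_\omega(\overline{\alpha})\circ\F_\omega(\alpha)=\F_\omega(\overline{\alpha}\circ\alpha)=\F_\omega(\id_c)=\Delta=\Gamma_{\id}$ and the composition of two graph relations is the graph of the composite automorphism, which forces $\Burau_\omega(\overline{\alpha})=\Burau_\omega(\alpha)^{-1}$. Either way, $\F_\omega(\overline{\alpha})=\Gamma_{\Burau_\omega(\alpha)^{-1}}$.

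Finally, substituting into the Maslov term and applying Lemma \ref{Meyer} with $\gamma_1=\Burau_\omega(\alpha)$ and $\gamma_2=\Burau_\omega(\beta)$, I obtain
\[
\Maslov\bigl(\F_\omega(\overline{\alpha}),\Delta,\F_\omega(\beta)\bigr)=\Maslov\bigl(\Gamma_{\Burau_\omega(\alpha)^{-1}},\Gamma_{\id},\Gamma_{\Burau_\omega(\beta)}\bigr)=-\Meyer\bigl(\Burau_\omega(\alpha),\Burau_\omega(\beta)\bigr),
\]
which combined with the first displayed equation proves the theorem. The only step that demands genuine care is the identification $\F_\omega(\overline{\alpha})=\Gamma_{\Burau_\omega(\alpha)^{-1}}$, i.e. checking that the bar operation on braids matches inversion in $B_c$ and is compatible with $\F_\omega$; a secondary bookkeeping point is to confirm that the ordering convention used to define $\Burau_\omega$ is consistent with the composition of isotropic relations under $\F_\omega$, which follows from functoriality together with the fact that composition of graphs is the graph of the composite. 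Everything else is a routine substitution, so no new geometric input beyond Theorems \ref{main} and \ref{functor} and Lemma \ref{Meyer} is needed.
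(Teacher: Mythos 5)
Your proposal is correct and follows essentially the same route as the paper: specialize Theorem \ref{main} to $\tau_1=\alpha$, $\tau_2=\beta$, use the identification $\F_\omega(\gamma)=\Gamma_{\Burau_\omega(\gamma)}$ established for coloured braids (valid precisely when $I_c(\omega)\neq 1$), observe that $\overline{\alpha}=\alpha^{-1}$ in $B_c$ so that $\F_\omega(\overline{\alpha})=\Gamma_{\Burau_\omega(\alpha)^{-1}}$, and conclude with Lemma \ref{Meyer}. The only difference is that you make explicit the step $\F_\omega(\overline{\alpha})=\Gamma_{\Burau_\omega(\alpha)^{-1}}$, which the paper leaves implicit, and your treatment of it is sound.
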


This result allows us to compute some real examples and show that the hypothesis of Theorem \ref{main} cannot be weakened. The following examples were studied by Cimasoni and Conway \cite{CimasoniConway} and showed that their formula could not be extended too much. We will see that in these two cases our formulation of the theorem cannot be extended further.

\begin{ex}
Consider the case $\mu=1$, $n=2$ and let $c:\{1,2\} \to \{\pm 1\}$ be constantly equal to $+1$. Notice that in this situation $I_c(\omega)= \omega^2$. Let $\alpha$ be the standard positive generator of the braid group $B_2$. The link $\widehat{\alpha \alpha}$ is the positive Hopf link $\mathcal{H}$, while $\widehat{\alpha}$ is the unknot. 
Of course $\sigma_\omega(\widehat{\alpha})=0$ because the unknot bounds a disk. 
The positive Hopf link bounds a surface $S$ and $H_1(S) \cong \Z$ is generated by the blue curve in Figure \ref{Hopf}, which we call $\gamma$. Notice that $\lk(\gamma, i_+(\gamma))= -1$, hence a matrix representing the Seifert form for $\mathcal{H}$ is $$ A=(-1). $$ 
As a consequence $(1-\omega) A + (1 - \overline{\omega})A^T=(-2+2 \Re \omega) $ and the signature $\sigma_\omega(\mathcal{H})$ is equal to $-1$ for every $\omega \in S^1 \setminus \{1\}$. As it is shown by Cimasoni and Conway in \cite{CimasoniConway},
$$\Meyer(\Burau_\omega(\alpha), \Burau_\omega(\alpha))= \begin{cases} 1 & \text{if } \omega \neq -1, \, \omega \in S^1 \setminus\{1\}; \\ 0 & \text{if } \omega = -1.  \end{cases} $$
Therefore the equality is satisfied for all $\omega \in S^1 \setminus \{\pm 1\}$ but not for $\omega = -1$ and in fact $I_c(-1)=1$. This shows that even in the basic case with $\mu=1$ one needs the assumption $I_c(\omega) \neq 1 $ for the equality in Theorem \ref{main} to hold.

\begin{figure}[H]
    \centering
    \includegraphics[width = 6 cm]{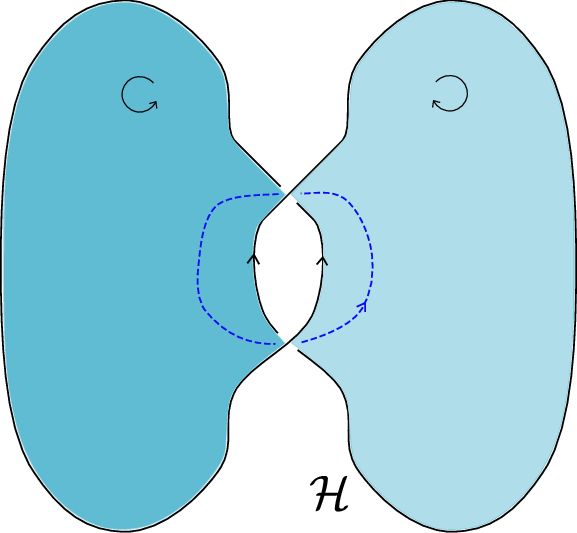}
    \caption{A Seifert surface for the Hopf link.}
    \label{Hopf}
\end{figure}
\end{ex}

\begin{ex}
Consider the case $\mu=2$, $n=2$ and let $c:\{1,2\} \to \{\pm 1, \pm 2\}$ be the identity. Notice that for $\omega=(\omega_1,\omega_2) \in (S^1 \setminus \{1\})^\mu$ we have $I_c(\omega)= \omega_1 \omega_2$. The $2$-coloured link $L$ in Figure \ref{pure} is equal to $\widehat{\beta\beta}$, where $\beta$ is the $(c,c)$-braid in Figure \ref{beta}. The link $\widehat{\beta}$ is the $2$-coloured Hopf link. There is a $C$-complex for the $2$-coloured Hopf link which consists of two disks that intersect transversely in one clasp intersection and this $C$-complex is contractible. Hence the multivariate signature for the $2$-coloured Hopf link vanishes for every $\omega \in (S^1 \setminus \{1\})^2.$ As it is shown by Cimasoni and Conway in \cite{CimasoniConway} 
$$\sigma_\omega(L)= -\sgn(\Re((1-\omega_1)(1-\omega_2))), $$
while 
$$\Meyer(\Burau_\omega(\beta), \Burau_\omega(\beta))= -\sgn(\Re((1-\omega_1)(1-\omega_2)(1-\omega_1\omega_2))). $$
It is possible to see that these quantities have the same sign if and only if $\omega_1\omega_2 \neq 1$, therefore the equality in Theorem \ref{main} holds if and only if $I_c(\omega) \neq 1$.

\begin{figure}[H]
    \centering
    \includegraphics[width = 9 cm]{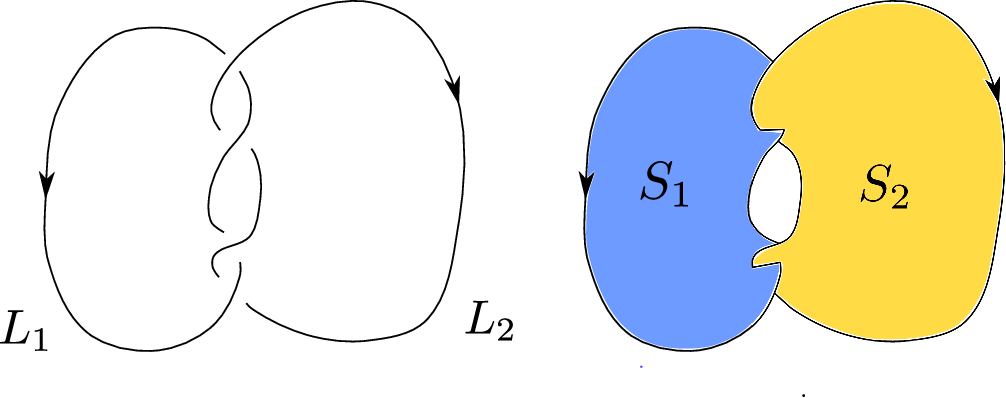}
    \caption{The $2$-coloured link $L$.}
    \label{pure}
\end{figure}

\begin{figure}[H]
    \centering
    \includegraphics[width = 4 cm] {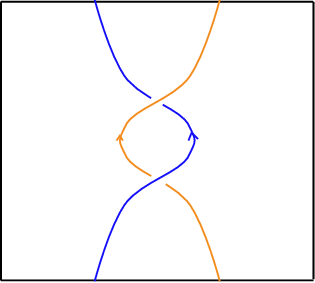}
    \caption{The coloured braid $\beta$.}
    \label{beta}
\end{figure}
\end{ex}

\end{document}